\documentclass[11pt,twoside]{article}

\let\svthefootnote\thefootnote
\newcommand\blfootnote[1]{%
	\let\thefootnote\relax\footnotetext{#1}%
	\let\thefootnote\svthefootnote%
}

\usepackage{geometry}
\usepackage{amssymb,amsthm}
\usepackage{amsmath}
\numberwithin{equation}{section}

\newcommand{\norm}[1] {\left \| #1 \right \|}
\newcommand{\inclu}[0] {\ar@{^{(}->}}

\newcommand{\cA}{\mathcal{A}}

\newcommand{\EE}{\mathbb{E}}
\newcommand{\trace}[1]{\mathrm{trace}\left(#1\right)}

\newcommand{\RR}{\mathbb{R}}

\newcommand{\cX}{\mathcal{X}}

\newcommand{\cT}{\mathcal{T}}

\newcommand{\cN}{\mathcal{N}}

\newcommand{\abs}[1]{\left| #1 \right|}

\newcommand{\ceil}[1]{\left \lceil #1 \right \rceil }
\newcommand{\floor}[1]{\left \lfloor #1 \right \rfloor }

\newcommand{\argmin}{\operatornamewithlimits{argmin}}

\newcommand{\argmax}{\text{argmax}}

\newcommand{\dotp}[1]{\left\langle #1\right\rangle}

\newtheorem{theorem}{Theorem}[section]

\newtheorem{definition}[theorem]{Definition}
\newtheorem{proposition}[theorem]{Proposition}
\newtheorem{lemma}[theorem]{Lemma}
\newtheorem{corollary}[theorem]{Corollary}

\newtheorem{assumption}{Assumption}

\newcommand{\cF}{\mathcal{F}}

\newtheorem{remark}{Remark}

\newcommand{\paren}[1]{ \left( #1 \right) }
\theoremstyle{remark}

\usepackage{mathtools}

\usepackage{hyperref}
\usepackage{color}
\usepackage{enumitem} 
\usepackage{algorithm}
\usepackage{algpseudocode}
\usepackage{caption}
\usepackage{subcaption}
\usepackage{adjustbox}  %

\theoremstyle{plain}
\newtheorem*{assumptionBprime}{\textbf{Assumption B\textquotesingle}}

\newcommand{\supp}{\mathrm{supp}}
\newcommand{\cov}{\mathrm{cov}}
\newcommand{\liph}{L_H}

\newcommand{\algname}{\mathrm{VISOR}}

\newcommand{\cZ}{\mathcal{Z}}
\newcommand{\ch}{C_H}

\newcommand{\nclass}{\cN}

\newcommand{\xSAA}{\widehat x^{(\texttt{SAA})}}
\newcommand{\cp}{\widehat x_n^{(\texttt{cp})}}
\newcommand{\rpj}{\widehat x_n^{(\texttt{RPJ})}}
\newcommand{\saa}{\widehat x_n^{(\texttt{SAA})}}
\newcommand{\tmu}{\tilde \mu}
\newcommand{\lmin}{\omega}
\begin{document}
	
	\begin{center}
		
		{\bf{\Large{Instance-optimal stochastic convex optimization: Can we improve upon sample-average and robust stochastic approximation?}}}
		
		\vspace*{.2in}
		
		{\large{
				\begin{tabular}{ccc}
					Liwei Jiang$^\star$ and Ashwin Pananjady$^\dagger$
				\end{tabular}
		}}
		\vspace*{.2in}
		
		\begin{tabular}{c}
			$^\star$Edwardson School of Industrial Engineering, Purdue University \\
			$^\dagger$H. Milton Stewart School of Industrial and Systems Engineering \& \\ School of Electrical and Computer Engineering, 
			Georgia Institute of Technology
		\end{tabular}
		
		\vspace*{.2in}

		\today
		
		\vspace*{.1in}

		\abstract{\blfootnote{Part of this work was performed when the first author was at Georgia Tech.} We study the unconstrained minimization of a smooth and strongly convex population loss function under a stochastic oracle that introduces both additive and multiplicative noise; this is a canonical and widely-studied setting that arises across operations research, signal processing, and machine learning. 
			We begin by showing that standard approaches such as sample average approximation and robust (or averaged) stochastic approximation can lead to suboptimal --- and in some cases arbitrarily poor --- performance with realistic finite sample sizes. In contrast, we demonstrate that a carefully designed variance reduction strategy, which we term $\algname$ for short, can significantly outperform these approaches while using the same sample size. Our upper bounds are complemented by finite-sample, information-theoretic local minimax lower bounds, which highlight fundamental, instance-dependent factors that govern the performance of any estimator. Taken together, these results demonstrate that an accelerated variant of $\algname$ is instance-optimal, achieving the best possible sample complexity up to logarithmic factors while also attaining optimal oracle complexity. We apply our theory to generalized linear models and improve upon classical results. In particular, we obtain the best-known non-asymptotic, instance-dependent generalization error bounds for stochastic methods, even in linear regression. 
            }
	\end{center}
\section{Introduction} \label{sec:intro}
Consider the canonical stochastic optimization problem of minimizing a smooth and strongly convex population objective function $F_{f,P}: \RR^d \to \RR$, given by
	\begin{align}\label{eqn:objective}
	F_{f,P}(x) := \EE_{z \sim P}[f(x,z)].
	\end{align}
	Here we have an underlying but unknown distribution $P$, and $f\colon \RR^d \times \cZ \rightarrow \RR$ is a ``sample-wise" objective function.
	Numerous problems in optimization, statistics, and machine learning can be modeled as minimizing a population objective as in Eq.~\eqref{eqn:objective}, but with access only to $n$ noisy functions $\{f(\cdot; z_1), \ldots, f(\cdot; z_n)\}_{i = 1}^n$, where $z_1, z_2,\ldots, z_n \in \cZ$ denote i.i.d. observations drawn from $P$. A long line of literature has focused on understanding the fundamental limits of this problem, and on analyzing how the behavior of various canonical algorithms compares with these limits. 
    Classically, there have been two approaches to this family of questions.
    
    The first approach uses the worst-case risk over a class of problem instances as the  measure by which to compare algorithms and establish lower bounds~\cite{complexity,agarwal2009information}. 
    A canonical class of such problems (see~\cite{ghadimi2012optimal}) is those in which the population objective is smooth, $\mu$-strongly convex, and the stochastic gradients have bounded variance at any point:\footnote{One can add additional constraints such as Lipschitz continuity of gradients and function values, but the same information-theoretic minimax lower bound still holds.} 
    \begin{align*}
        \mathcal{P}(\mu, \sigma) := \left\{ (f,P) \left| 
	\begin{aligned}
	&\text{ $ F(y) - F(x) - \dotp{\nabla F(x), y-x}\ge \frac{\mu}{2}\|y-x\|_2^2$,  \ for any $x,y \in \RR^d$} \\
	& \text{ and \ $\EE[\|\nabla f(x,\xi) - \nabla F(x)\|_2^2] \le \sigma^2$, \ for all $x \in \RR^d$}
	\end{aligned}
	\right.\right\}.
        \end{align*}
        Then a standard application of Fano's method shows that for any estimator $\widehat{x}_n$ based on i.i.d. samples $\{z_i\}_{i=1}^{n}$, we must have
        the worst-case risk lower bounded as 
        \begin{align} \label{eq:worst-case-lb}
         \sup_{(f,p) \in \mathcal{P}(\mu,\sigma)} \EE[\|\widehat x_n - x^\star(F_{f,P})\|_2^2] \ge \frac{\sigma^2}{n\mu^2}.
        \end{align}
    One can ask if the lower bound~\eqref{eq:worst-case-lb} is achieved by any estimator $\widehat{x}_n$, and indeed,
    over this class of stochastic optimization problems, it is known that minimax rate-optimal estimators can be constructed with stochastic first-order information~\cite{ghadimi2012optimal}. These estimators achieve the worst-case lower bound~\eqref{eq:worst-case-lb} up to universal constant factors and only require access to $\nabla f(x_t, z_t)$ at carefully chosen query points $\{x_t\}_{t = 1}^n$. 
    
    On the one hand, the above results provide a complete picture of the worst-case risk in this problem. Furthermore, complexity characterizations based on the minimax risk are well-defined for every finite $n$ and possess many appealing properties from the perspective of statistical decision theory~\cite{le2012asymptotic, tsybakov2008nonparametric}. On the other hand, assessing the sample complexity of algorithms purely in terms of their worst-case risk is pessimistic. Indeed, the worst-case optimality of an algorithm over the global problem class $\mathcal{P}(\mu, \sigma)$ does not necessarily imply that this algorithm is able to leverage geometric properties of problem instances that are \emph{close to} $(f, P)$. In particular, the worst-case optimality of a stochastic first-order method does not imply anything about its \emph{adaptivity} to any structure present in the instance $(f, P)$.
    
    As a remedy to the minimax approach, the second approach to understanding fundamental limits of stochastic optimization better captures the \emph{local} desideratum alluded to above.
    A typical result of this form, which is a specialization of the one proved in~\cite{duchi2021asymptotic}, takes the following form: For a population objective $F$ that is smooth and strongly convex with minimizer $x^\star$, define the Gaussian random vector 
	\begin{align} \label{eq:Gaussian-covariance}
Z\sim N\left(0\;,\; \underbrace{\nabla^2 F(x^\star)^{-1} \cdot \cov_{z_i\sim P}(\nabla f(x^\star, z_i)) \cdot  \nabla^2 F(x^\star)^{-1}}_{\Lambda}\right).
\end{align}
	Then any estimator $\widehat x_n$ based on $n$ i.i.d. samples satisfies
	\begin{align}\label{eqn:asymp_lb}
	\liminf_{c \rightarrow \infty}\;\; \liminf_{n\rightarrow \infty} \sup_{\tilde P \colon \mathrm{D_{KL}}(\tilde P \| P) \le \frac{c}{n}} \EE_{z_i\overset{\mathrm{iid}}{\sim} \tilde P}[ \| \sqrt{n}(\widehat x_n - x^\star) \|^2_2 ] \ge \EE[\| Z \|^2_2] = \trace{\Lambda},
	\end{align}
	where $\mathrm{D_{KL}}(\cdot \| \cdot)$ denotes the Kullback–Leibler (KL) divergence between two probability distributions and the limit over $c$ is taken for technical reasons. 
	The lower bound~\eqref{eqn:asymp_lb} implies that in the asymptotic regime when $n \rightarrow \infty$, the estimation error $\|\sqrt{n} (\widehat x_n - x^\star)\|^2$ is bounded below by $\trace{\Lambda}$, where $\Lambda$  is the so-called \emph{inverse Fisher information matrix}, determined by the interaction between the problem's geometry (captured by the population Hessian at $x^\star$) and the noise characteristics (captured by the covariance of the sample gradient at $x^\star$). In that sense, this characterization is \emph{instance-dependent}, since the error has explicit dependence on $(f, P)$ and may be small when the instance has favorable geometry with a small value of $\trace{\Lambda}$. %
	
	As before, one can again ask if the lower bound~\eqref{eqn:asymp_lb} is achieved by some estimator $\widehat{x}_n$. The first candidate for such an estimator is sample average approximation (SAA) or empirical risk minimization, which selects an estimator
	\begin{align*}
	\saa \in \argmin_{x \in \RR^d} \frac{1}{n} \sum_{i=1}^{n} f(x, z_i).
	\end{align*} 
	Classical results (eg~\cite[Theorem 3.3]{shapiro1989asymptotic}) show that under mild regularity conditions, SAA exhibits asymptotic normality; recalling the matrix $\Lambda$ from before, we have the weak convergence property
\begin{align}\label{eqn:saa_normality}
	\sqrt{n} (\saa - x^\star) \overset{w}{\longrightarrow} Z \sim N(0,\Lambda).
	\end{align}
	However, since the $\ell_2$-norm is unbounded, this does not imply that the lower bound~\eqref{eqn:asymp_lb} is attained, and SAA can incur infinite $\ell_2^2$ risk (see Section~\ref{sec:SAA_fail}).  
    Having said that, the instance-dependent lower bound~\eqref{eqn:asymp_lb} is indeed achievable by a different but also classical estimator. Consider the average 
	$$\rpj = \frac{1}{n} \sum_{i=1}^{n}x_k$$ 
	of the iterates $\{ x_k \}_{k \geq 1}$ of the stochastic gradient method. Ruppert~\cite{ruppert1988efficient} and Polyak and Juditsky~\cite{polyak1992acceleration} show that with suitable stepsize choices and under mild conditions, this estimator also exhibits asymptotic normality, with
\begin{align}\label{eqn:rpj_normality}
\sqrt{n} (\rpj - x^\star) \overset{w}{\longrightarrow} Z \sim N(0,\Lambda).
\end{align}
	 Furthermore, a direct application of~\cite[Theorem 3]{moulines2011non} by Bach and Moulines implies that
	\begin{align}\label{eqn:upper_bound_RPJ}
	\lim_{n\rightarrow \infty}\EE_{z_i\overset{\mathrm{iid}}{\sim}  P}[\|\sqrt{n}(\rpj - x^\star)\|_2^2] \rightarrow \trace{\Lambda},
	\end{align}
	which implies that $\rpj$ is an asymptotically instance-optimal algorithm for $\ell_2^2$ risk. Since the average iterate is more numerically stable to hyperparameter choices than the last iterate, this method is often referred to as \emph{robust stochastic approximation (SA)}~\cite{nemirovski2009robust}.

    \subsection{An illustrative experiment: Is optimality achieved in practice? }

    While the state of affairs described above suggests that we have succeeded in developing instance-optimal estimators for stochastic optimization, the situation is significantly more nuanced in practice. Consider for instance the second notion mentioned above, of local asymptotic optimality. 
    Both the instance-dependent lower bound~\eqref{eqn:asymp_lb} and the upper bound~\eqref{eqn:upper_bound_RPJ} are valid only as the sample size tends to infinity, and may only be meaningful for very large (and impractical) $n$. In practice, we are in the non-asymptotic or finite-sample regime, in which questions surrounding instance-dependent optimality ought to take a different flavor. 
    Concretely, we might ask if and when it is possible to achieve an instance-dependent risk of the order $\trace{\Lambda}$ when $n$ is finite. In particular, how large must $n$ be for any algorithm to exhibit such behavior, and does the asymptotically optimal estimator $\rpj$ perform well in the finite-sample regime?

\begin{figure}[h!]
	\centering
	\begin{minipage}{\textwidth}
		\centering
		\includegraphics[width=0.8\linewidth]{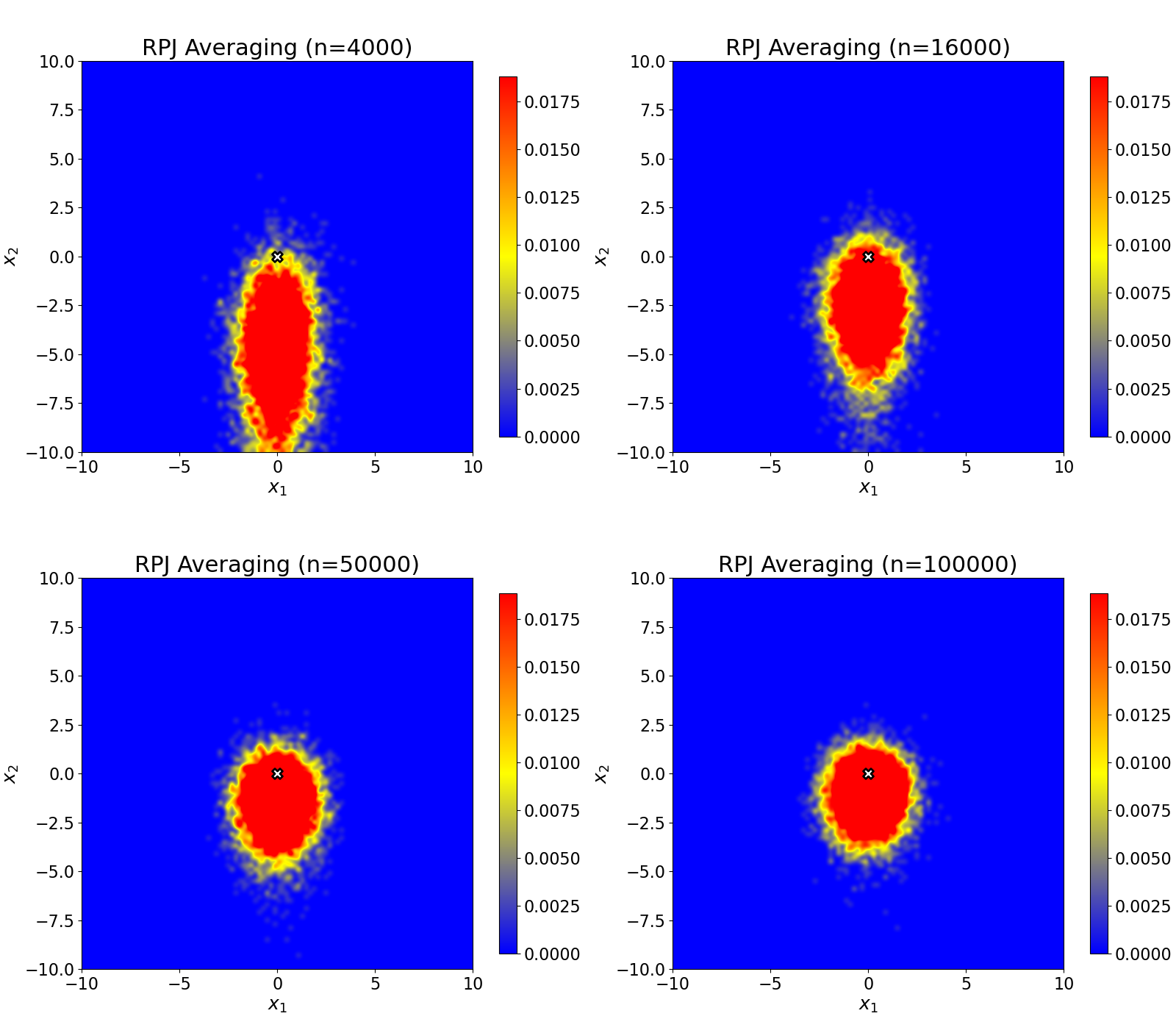}
		\caption{\small Heat maps of $\sqrt{n}(\rpj - x^\star)$ for different $n$. We always initialize the algorithm at the origin (initial distance to minimizer is $\sqrt{2}$) and each heatmap is generated over $10{,}000$ trials. \vspace{-3mm}}
		\label{fig:polyak_averaging_asymp}
	\end{minipage}
\end{figure}

    To obtain answers to these questions in a concrete example (to be more extensively examined in Section~\ref{sec:suboptimality_of_SA}) consider the following quadratic optimization problem parameterized by $\zeta \ge 1$.
	Define the matrix-vector pair
	\begin{align} \label{eqn:example_break_robust_SA}
	A = \begin{bmatrix}
	\zeta^2 & 0 \\
	0 & 1
	\end{bmatrix} \quad \text{and}  \qquad b = \begin{bmatrix}
	-\zeta^2 \\
	-1
	\end{bmatrix}.
	\end{align}
	For this family of $(A, b)$ pairs, suppose our goal is to minimize the function $F(x) = \frac{1}{2} x^\top A x + b^\top x$  using only i.i.d. samples $A_i$ of $A$ and $b_i$ of $b$, where 
	\begin{itemize}
		\item $A_i = A + \begin{bmatrix}
		z_i &  -z_i \\
		-z_i & z_i
		\end{bmatrix}$ and $z_i$ are i.i.d. RVs taking values $\zeta$ and $-\zeta$ with probability $1/2$ each;
		\item $b_i = b + \eta_i$ where $\eta_i \sim N\left(\begin{bmatrix}
		0\\
		0
		\end{bmatrix}, \begin{bmatrix}
		\zeta^4 & 0\\
		0 & 1
		\end{bmatrix}\right)$.
	\end{itemize}
A straightforward calculation shows that for any $\zeta \ge 1$, we have $ x^\star = \begin{bmatrix}
	1\\
	1
	\end{bmatrix}$ uniformly. Moreover, the limiting covariance (see Eq.~\eqref{eq:Gaussian-covariance}) is given by $\Lambda = \begin{bmatrix}
	1 & 0\\
	0&  1
	\end{bmatrix}$ uniformly. 
Therefore, the results discussed above yield that asymptotically, we have 
\begin{align*}
\sqrt{n}(\rpj - x^\star) \overset{w}{\longrightarrow} N\left(\begin{bmatrix}
0\\
0
\end{bmatrix}, \begin{bmatrix}
1 & 0 \\
0 & 1
\end{bmatrix}\right),
\end{align*}
i.e., the scaled error vector of averaged stochastic approximation converges to a standard Gaussian.

In Figure~\ref{fig:polyak_averaging_asymp}, we run simulations of the stochastic approximation algorithm in this problem for various values of $n$. Even for the moderate choice $\zeta^2 = 20$, we see that the rescaled error $\sqrt{n}(\rpj - x^\star)$ only begins to resemble a standard Gaussian after approximately $n=10^6$ samples, which is a very large sample size for a $2$-dimensional, moderately-conditioned problem. Before this, the error distribution is skewed, suggesting a much larger $\ell_2^2$ error than is predicted asymptotically.

To further probe this phenomenon, we run our simulation for a sequence of $\zeta$ values, choosing a problem-dependent sample size $n(\zeta) = 200\zeta^2$ for each such simulation. Assuming for the moment that this large a sample size is sufficient for \emph{some} estimator to attain the instance-optimal $\ell_2^2$ error, we should hope that the rescaled error has distribution resembling a standard Gaussian.
However, we see from Figure~\ref{fig:polyak_averaging_different_zeta} that this is not borne out in practice -- the error gets worse as $\zeta$ gets larger.

\begin{figure}[h!]
	\centering
	\begin{minipage}{\textwidth}
		\centering
	\includegraphics[width=\textwidth]{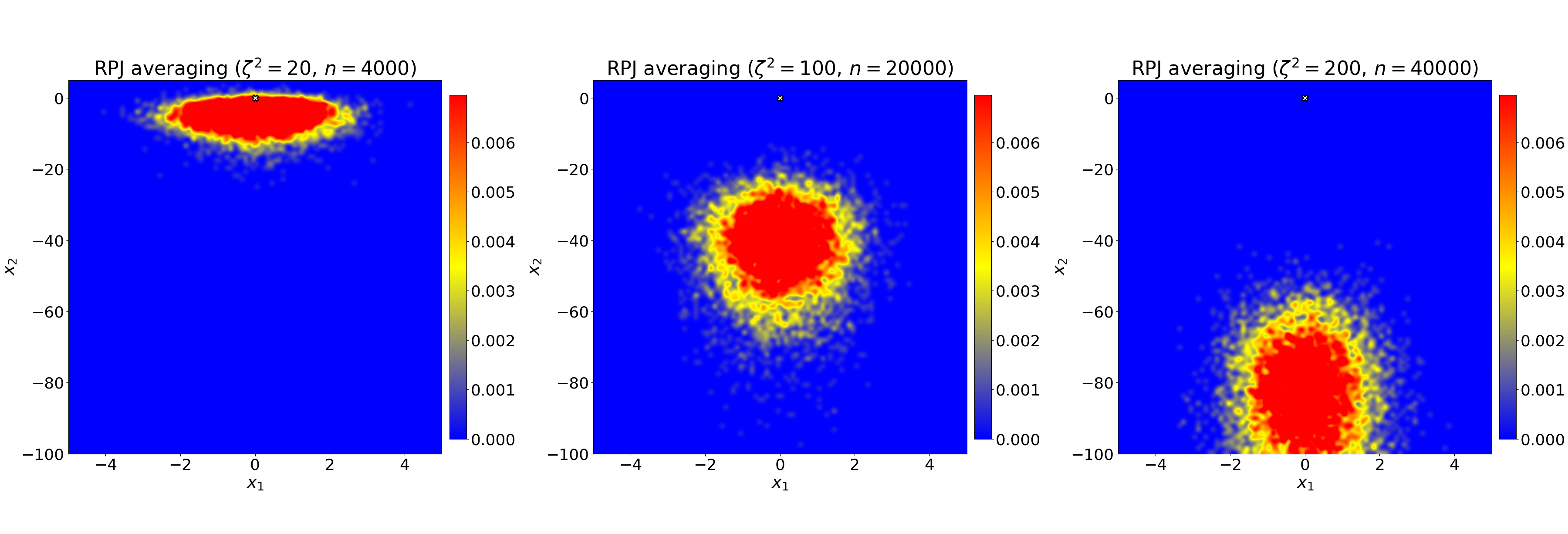}
	\caption{\small Heat maps of $\sqrt{n}(\rpj - x^\star)$ for different $\zeta^2$ and sample size $n = 200\zeta^2$. We always initialize the algorithm at the origin (initial distance to minimizer is $\sqrt{2}$) and each heatmap is generated over $10,000$ trials. (Note that $x_1$ and $x_2$ have different scales in the above plots.)}
	\label{fig:polyak_averaging_different_zeta}
\end{minipage}
\end{figure}

This poor finite-sample performance of the asymptotically optimal estimator $\rpj$ raises two important questions: Is performance poor because the given problem is information-theoretically challenging for our illustrated values of $n$ (meaning that no algorithm can significantly improve upon the performance of $\rpj$)? Or is a better performance attainable in this finite-sample regime, but by a different estimator?
Motivated by these observations, we pose the following questions for smooth and strongly convex stochastic optimization:
\begin{center}
	\begin{enumerate}
		\item[Q1.] How can we characterize the instance-dependent hardness of problem~\eqref{eqn:objective} for a finite, fixed sample size $n$? 
        \item[Q2.] Which algorithms can achieve optimality in this finite-sample regime?
	\end{enumerate}
    \end{center}

\subsection{Contributions and organization}

	Our main contribution is to answer the above questions. We prove the following results:
	\begin{enumerate}
		\item  \textbf{Non-asymptotic lower bound and suboptimality of $\rpj$ and $\saa$.} We  establish a lower bound that characterizes the instance-dependent hardness of smooth and strongly convex stochastic optimization problems for any given sample size $n$. We show that, up to a universal constant factor, the key geometric quantity that governs the local minimax lower bound in $\ell_2^2$ risk is still $\trace{\Lambda}$, as long as the sample size exceeds some explicit, problem-dependent threshold $n_0$. %
        
        When applied to the family of quadratic problems above, our Theorem~\ref{thm: mainlowerboundtheorem_intro} implies that there exist two universal constants $c_1$ and $c_2$ such that 
		for any sample size $n \le c_1\zeta^2$, one should not expect any reasonable algorithm to attain finite error.
		On the other hand, for $n \ge c_1\zeta^2$, the expected rescaled $\ell^2_2$ error $\EE[\|\sqrt{n}(\widehat x_n - x^\star)\|_2^2]$ is at least 
			\begin{align*}
			c_2 \cdot \trace{\Lambda} = 2 c_2,
			\end{align*}
            where the last equality follows because in this family of problems, we have $\Lambda = I$.
		Note that once we have $\Omega(\zeta^2)$ samples, the lower bound is a universal constant independent of $\zeta$. Figure~\ref{fig:polyak_averaging_different_zeta}, which examines this regime, shows that $\rpj$ fails to achieve our non-asymptotic lower bound since the error is sensitive to $\zeta$. 
        Similar issues plague the estimator $\saa$. 
		
    \begin{figure}[b!]
	\centering
	\begin{minipage}{\textwidth}
		\centering
	\includegraphics[width=\textwidth]{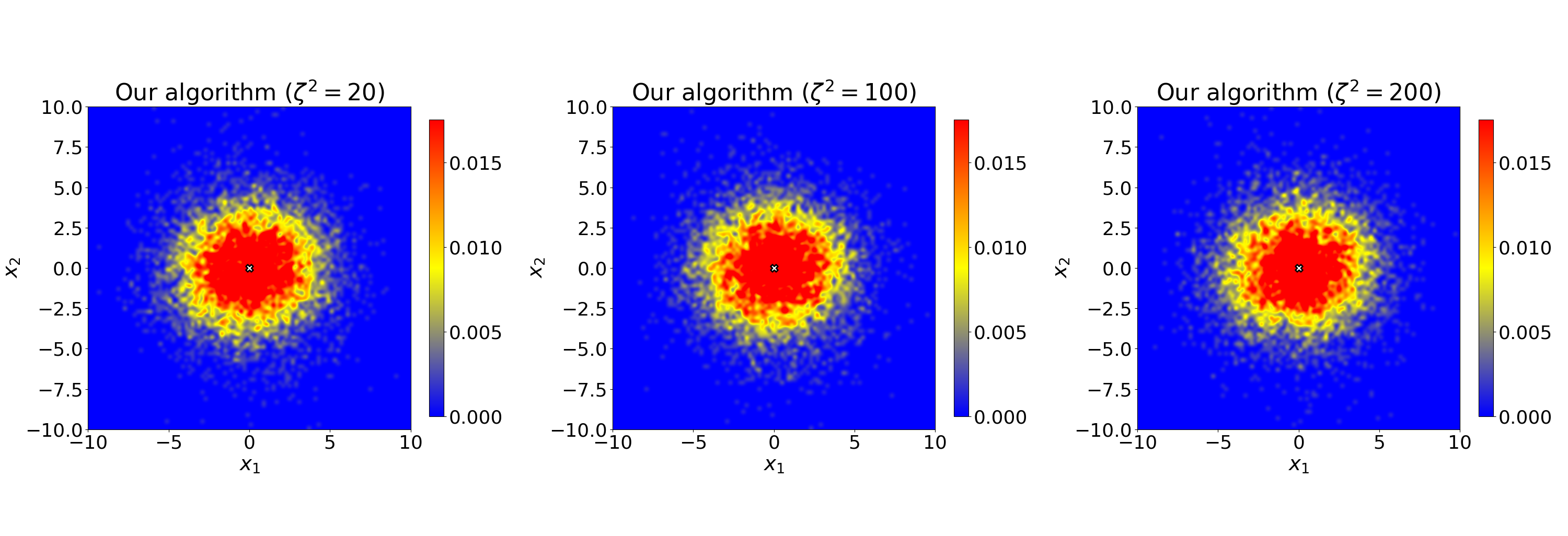}
	\caption{Heat maps of $\sqrt{n}(\widehat x_n - x^\star)$ for our algorithm for different $\zeta^2$ and sample size $n = 200\zeta^2$. For each $\zeta^2$, we always initialize the algorithm at the origin (initial distance to minimizer is $\sqrt{2}$) and perform $10,000$ trials to generate the heat map.}
	\label{fig:our_alg_averaging_different_zeta}
		\end{minipage}
\end{figure}

    		\item \textbf{Instance-optimal (and accelerated) stochastic optimization algorithms.} We propose a simple first-order online algorithm, $\algname$, that incorporates variance reduction techniques wrapped around a (possibly accelerated) stochastic approximation inner loop. For quadratic optimization problems, $\algname$ matches our non-asymptotic, instance-dependent lower bound up to a logarithmic factor. For general non-quadratic problems, it nearly attains the aforementioned local lower bound under an additional assumption on the noise in the problem. Since our method is in general accelerated, it also achieves optimal first-order oracle complexity (which is particularly desirable when the noise level is small). Notably, our convergence guarantees hold for any norm induced by an inner product, not only the standard $\ell_2$ norm --- this feature of our results is not only of general interest but also allows to obtain novel guarantees on the generalization error of our algorithm for least-squares regression (see the point below).
    
    To illustrate, let us again consider the family of quadratic problems above, parameterized by $\zeta$. Applying Theorem~\ref{thm:vr_quadratic}, there exists a universal constant $C > 0$ such that when the sample size satisfies $n \gtrsim_{\log} \zeta^2$, the output $\widehat{x}_n$ of $\algname$ satisfies
    \begin{align*}
    	\EE[\|\sqrt{n}(\widehat x_n - x^\star)\|_2^2] \le C \trace{\Lambda} = 2C,
    \end{align*}
    which matches the lower bound up to a logarithmic factor. Moreover, as shown in Figure~\ref{fig:our_alg_averaging_different_zeta}, for the same sequence of problems as before with sample sizes $n = 200 \zeta^2$, the rescaled error exhibits approximately Gaussian behavior with the correct covariance structure up to a universal constant factor. 
    Contrast this with the behavior of the $\rpj$ estimator in Figure~\ref{fig:polyak_averaging_different_zeta}.

    \item \textbf{Applications to generalized linear models.} We apply our convergence guarantees to generalized linear models~\cite{nelder1972generalized} and obtain nearly instance-optimal and non-asymptotic risk bounds. In particular, we show in Section~\ref{sec:improve_least_squares} that our algorithm improves the best known non-asymptotic guarantees for stochastic methods in least-squares regression~\cite{jain2018accelerating} by a factor of the condition number. 

	\end{enumerate}

	The rest of this paper is organized as follows. Section~\ref{sec:related_work} contains a detailed discussion of related work. In Section~\ref{sec:background}, we set the stage, state key assumptions, and provide concrete examples of problems covered by our theory. Section~\ref{sec:suboptimality} provides a non-asymptotic local minimax lower bound for the class of quadratic optimization problems as well as simple examples where both SAA and averaged SGD fail to match this lower bound.   We present our new algorithm in Section~\ref{sec:better_algorithm} and its convergence guarantees in Section~\ref{sec:overall_complexity}. Our general non-asymptotic lower bounds are presented in Section~\ref{sec:lower_bound}. Conceptually simple and short proofs are presented just after the corresponding statements of results, while the more technical proofs are deferred to the appendix.

\subsection{Related work}\label{sec:related_work}

The literature on statistical analysis in stochastic optimization is vast, and we cannot hope to do justice to it here. We refer the reader to the books~\cite{benveniste2012adaptive,shapiro2021lectures} for classical (and largely asymptotic) results and the book~\cite{lan2020first} for a more modern non-asymptotic treatment. Below, we discuss the results that are most closely related to the focus of our paper, organized under two subheadings.

\paragraph{Non-asymptotic instance-dependent analysis.}
Non-asymptotic and instance-dependent analysis has been a challenging but fruitful program in high-dimensional statistics, and was first carried out under the so-called ``two-point'' framework by~\cite{cai2015framework} for estimation of one-dimensional convex functions.  Unlike classical minimax analysis that considers the worst-case over all functions in a function class, this framework obtains lower bounds for any specific instance by only considering the worst-case risk over that instance and its hardest alternative. While the two-point framework has been applied to many different contexts since, it is insufficient to characterize local complexity beyond the one-dimensional case.
More recently, non-asymptotic and instance-dependent guarantees have been established in multiple dimensions with the goal of matching the asymptotic risk. Settings considered include Markov decision processes~\cite{pananjady2020instance,khamaru2021temporal,khamaru2021instance,li2023accelerated, li2020breaking, li2024settling} and stochastic approximation~\cite{mou2022optimal,haque2023tight}, but these works do not justify whether the asymptotic minimax risk remains the appropriate complexity measure in non-asymptotic settings.
Non-asymptotic lower bounds have also been derived in~\cite{mou2022optimal1, mou2023optimal} for estimation in projected fixed-point equations and Markovian linear stochastic approximation. However, both settings are linear and do not address nonlinear scenarios that form the focus of our work.

\paragraph{Local guarantees for stochastic optimization.} As mentioned in Section~\ref{sec:intro}, the paper~\cite{duchi2021asymptotic} applied H\'ajek and Le Cam's local minimax theory to develop asymptotic local minimax lower bounds for stochastic optimization problems. This asymptotic complexity measure is matched by the Ruppert--Polyak--Juditsky averaging procedure exactly in smooth~\cite{ruppert1988efficient,polyak1992acceleration}, nonsmooth and constrained settings~\cite{davis2024asymptotic}.
Recent work~\cite{khamaru2024stochastic} studies general stochastic constrained convex optimization problems and proposes a non-asymptotic instance-dependent lower bound that is extracted from the proof of the asymptotic local minimax theory. However, it is unclear whether the hardest instances asymptotically are also the hardest instances for each fixed sample size. 
Also inspired by non-asymptotic bounds, the paper~\cite{duchi2016local} applies the two-point lower bound to stochastic convex optimization, characterizing problem difficulty in one-dimensional problems. A recent effort to capture local geometry in optimization problems was also made in the paper~\cite{cutler2024radius}.

More broadly, non-asymptotic guarantees for stochastic optimization problems have been studied extensively in the literature, and we provide a brief overview of results under smooth and strongly convex settings. The optimal algorithm in the classical minimax sense has been studied in~\cite{ghadimi2012optimal,ghadimi2013optimal}, where Ghadimi and Lan propose the AC-SA algorithm and show that its restarted version is worst-case optimal under the assumption that gradient noise has uniformly bounded variance. Moving beyond worst-case characterizations, instance-dependent analysis of stochastic optimization algorithms has also been carried out in several works. The work~\cite{nguyen2019new} proves an instance-dependent rate for iterate convergence of stochastic gradient descent (SGD), but the rate itself is not instance-optimal. Applying variance-reduction techniques, the paper~\cite{nguyen2021inexact} proves an instance-optimal rate on the gradient norm, but it is unclear how the analysis can be extended to iterate convergence without losing instance-optimality. The works~\cite{moulines2011non}, ~\cite{xu2011towards}, and~\cite{gadat2017optimal} provide non-asymptotic analysis for Ruppert--Polyak--Juditsky averaging and the convergence rate of has an instance-dependent leading term that matches the asymptotically optimal rate, and with higher-order term (which controls non-asymptotic performance) taking the form $O(n^{-7/6})$ in the first work and $O(n^{-5/4})$ in the last two works.  The work~\cite{li2022root} proposes the ROOT-SGD algorithm and provides convergence guarantees in terms of gradient norm, distance to solution, and function gap. All the convergence rates have leading terms that match the asymptotically optimal rate, and higher-order terms scale as $O(n^{-3/2})$, improving upon early works. The paper~\cite{frostig2015competing} proposes streaming SVRG and shows that under a self-concordance assumption, its convergence rate on the function gap has a leading term that can be made arbitrarily close to the asymptotic rate of empirical risk minimization. However, in all the works above, the convergence rates have higher-order terms that can be potentially much larger than the leading term, and the number of samples they require for the higher-order terms to be dominated by the optimal leading term can be large (as illustrated in Figure~\ref{fig:polyak_averaging_asymp}).

Besides these papers, an extensive body of work~\cite{defossez2015averaged,dieuleveut2016nonparametric, jain2018parallelizing,jain2018accelerating} studies and obtains instance-dependent guarantees for least-squares regression, which is an important special case of our setting. In particular,~\cite{jain2018accelerating} obtains the best-known sample complexity for stochastic methods.
As alluded to before, many of these algorithms rely on careful forms of variance reduction.
Our algorithm also draws inspiration from SVRG~\cite{johnson2013accelerating} but makes crucial modifications to it. For a review of related works on variance-reduced gradient methods, we refer the reader to a recent survey paper~\cite{gower2020variance}.

\subsection{Notation}\label{sec:notation}
Let $\dotp{\cdot,\cdot}$ denote the dot product in Euclidean space, which induces norm $\|x\|_2= \sqrt{\dotp{x,x}}$.  For $r > 0$ and $x \in \mathbb{R}^d$, we denote by $B_r(x)$ and $\overline{B}_r(x)$ the open and closed Euclidean balls of radius $r$ centered at $x$, respectively. We denote the unit sphere in \(\RR^d\) under the standard Euclidean norm by \(\mathbb{S}^{d-1}\). As previously mentioned, we will work not just with the $\ell_2$ norm but with general Hilbert norms\footnote{Even when we work with general norms, the inner product will always denote the Euclidean dot product.} $\|\cdot\|$. Since we operate in finite-dimensional spaces, any such norm can be written as $\| x \| = \|x\|_Q := \sqrt{\dotp{x,Q x}}$ for some positive definite matrix $Q$. We denote the dual of the norm $\|\cdot\|$ by $\|\cdot \|_*$, i.e.  $\|y\|_* = \sup_{\|x\| \le 1} \dotp{x,y}$.  For any matrix $M \in \RR^{d\times d}$, we use $\|M\|$ to denote the induced operator norm, i.e., $\|M\| = \sup_{\|x\| = 1} \|Mx\|$. In particular, $\|A\|_2$ will denote the spectral norm of a matrix $A$. The notation $\|A\|_{nuc}$ denotes the nuclear norm of $A$, namely, the sum of all its singular values. For any matrix $A\in\RR^{d\times d}$, we let $\det(A)$ denote the determinant of $A$. For a symmetric matrix $A \in \RR^{d\times d}$, we use $\lambda_{\max}(A)$ and $\lambda_{\min}(A)$ to denote its largest and smallest eigenvalue, respectively.  For symmetric matrices \(A,B \in \RR^{d\times d}\), we write \(A \preceq B\) if \(B-A\) is positive semidefinite, and \(A \succeq B\) if \(B \preceq A\). For a random vector $\xi$ with bounded second moment, we denote its covariance matrix by 
$\cov(\xi) = \EE[(\xi - \EE[\xi]) (\xi - \EE[\xi])^\top]$. 
A random variable $X$ is sub-exponential with parameters $(\nu^2, \alpha)$ if for any $t$ such that $|t| \le \frac{1}{\alpha}$, we have
\begin{align*}
	\EE[e^{t (X-\EE[X])}] \le e^{\frac{t^2 \nu^2}{2}}.
\end{align*}
  Define the Orlicz norms
	\begin{align*}
		\|X\|_{\psi_1} := \inf\left\{ t>0 \;\middle|\; \EE\!\left[\exp\!\left(\frac{|X|}{t}\right)\right] \le 2 \right\} \text{ and }
		\|X\|_{\psi_2} := \inf\left\{ t>0 \;\middle|\; \EE\!\left[\exp\!\left(\frac{X^2}{t^2}\right)\right] \le 2 \right\}.
	\end{align*}
	It is well-known that $X$ is sub-exponential iff $\|X\|_{\psi_1}<\infty$. We say $X$ is sub-Gaussian if $\|X\|_{\psi_2}<\infty$.
    
For any continuously differentiable function $h:\RR^d \rightarrow \RR$, we let $\nabla h(x) \in \mathbb{R}^d$ denote the gradient of $h$ evaluated at $x$. When $h$ is twice differentiable, we denote its Hessian at $x \in \RR^d$ by $\nabla^2 h(x) \in \mathbb{R}^{d \times d}$. For a general smooth map $G:\RR^d \rightarrow \RR^m$, we denote its Jacobian at $x \in \RR^d$ by $\nabla G(x)  \in \RR^{m\times d}$, which can also be viewed as a linear map from $\RR^d$ to $\RR^m$. For two maps $F$ and $G$, we write $F\equiv G$ if they are identical. For two sequences of nonnegative reals $\{f_n\}_{n\ge 1}$ and $\{g_n\}_{n\ge 1}$, we use $f_n \lesssim g_n$ to indicate that there is a universal positive constant $C$ such that $f_n \le C g_n$ for all $n \ge 1$. We use $f_n \lesssim_{\log} g_n$ to indicate that there is a universal positive constant $c$ such that $f_n \lesssim  g_n \log^c (en) $. The relation $f_n \gtrsim g_n$ (resp. $f_n \gtrsim_{\log} g_n$) indicates that $g_n \lesssim f_n$ (resp. $g_n \lesssim_{\log} f_n$). We also use the standard order notation $f_n = O(g_n)$ to indicate that $f_n \lesssim g_n$ and $f_n = \tilde O(g_n)$ to indicate that $f_n \lesssim_{\log} g_n $. We say that $f_n = \Omega(g_n)$ (resp. $f_n = \tilde \Omega(g_n)$) if $g_n = O(f_n)$ (resp. $g_n = \tilde O(f_n)$).	For any \(x \in \RR\), \(\ceil{x}\) and \(\floor{x}\) denote the smallest integer greater than or equal to \(x\) and the largest integer less than or equal to \(x\), respectively.

\section{Formal setup and examples}\label{sec:background}

In referring to the population objective $F_{f,P}$ in Eq.~\eqref{eqn:objective}, we drop the subscripts $(f,P)$ when these are clear from context. We denote the minimizer of $F$ by $x^\star(F)$ if it is unique, and drop the parentheses when $F$ is clear from context. When it is not clear if there is a unique minimizer, we use $\argmin F$ to denote the set of minimizers. For the sample-wise functions, we use $\nabla f(\cdot,z)$ to denote the gradient of $f$ with respect to its first argument  whenever this is well-defined.

Recall that a function $h$ is $\mu$-strongly convex and $L$-smooth with respect to a norm $\| \cdot \|$ if $h$ is differentiable and
\begin{align*}
	\frac{\mu}{2}\|y-x\|^2 \le h(y) - h(x) - \dotp{\nabla h(x), y-x} \le \frac{L}{2} \|y-x\|^2
\end{align*}
for all $x, y$ in its domain.
We focus on the smooth and strongly convex setting and begin with the following assumption on the population objective function.
\begin{assumption}\label{assum: non-quadratic}
	The population objective function $F$ is $\mu$-strongly convex and $L$-smooth on $\mathbb{R}^d$ with respect to the norm $\| \cdot \|$. We denote its minimizer by $x^\star$. Additionally, $F$ has $\liph$-Lipschitz Hessian in an instance-specific norm, meaning that for any $x, x' \in \RR^d$,
	\begin{align}\label{eqn:lip_hessian}
		\|\nabla^2 F(x^\star) ^{-1}(\nabla^2 F(x) - \nabla^2 F(x'))\| \le \liph \|x - x'\|.
	\end{align}
	We further define $\lmin := \inf_{\|v\| =1} \| \nabla^2 F(x^\star) v \|_*$, so  that $\|\nabla^2 F(x^\star) v\|_* \ge \lmin \|v\|$ for any $v \in \RR^d$.
\end{assumption}
Note that the Lipschitz Hessian condition~\eqref{eqn:lip_hessian} is stated with respect to a norm scaled by $\nabla^2 F(x^\star)^{-1}$. This instance-dependent condition naturally reflects the local geometry at the optimal solution: directions associated with larger curvature at $x^\star$ allow for greater variations in the Hessian. Note also that $\lmin \ge \mu$ (see Lemma~\ref{lem:general_lambda_min}). Next, we state our regularity condition on the stochastic noise. 
\begin{assumption}\label{assum:vr_assum}
	For every fixed sample $z$, the function $f(\cdot, z)$ is differentiable.  
	The stochastic gradient $\nabla f(x, z)$  has a finite second moment for all $x \in \RR^d$. Moreover, there exists a constant $\zeta \ge 0$ such that, for all $x, x' \in \RR^d$,
	\begin{align}\label{eqn:vr_assum}
		\EE_{z \sim P}\!\left[\big\|(\nabla f(x, z) - \nabla F(x)) - (\nabla f(x', z) - \nabla F(x'))\big\|_*^2\right]
		\le \zeta^2 \|x - x'\|^2,
	\end{align}
	where $F(x) := \EE_{z \sim P}[f(x, z)]$.  
	We denote the covariance matrix of the stochastic gradient at the optimum by
	\[
	\Sigma := \cov_{z \sim P}(\nabla f(x^\star, z)).
	\]
\end{assumption}
Assumption~\ref{assum:vr_assum} has appeared in~\cite{li2022root} in the optimization setting and in~\cite{mou2023optimal,li2023accelerated} in other related settings. We point out that our assumption \emph{does not} require that each sample objective is $L$-smooth and it is strictly weaker than the following popular assumption in stochastic optimization (e.g. the papers~\cite{moulines2011non, needell2014stochastic, frostig2015competing, nguyen2019new,khamaru2024stochastic}, which use this stronger assumption with $\| \cdot \| = \| \cdot \|_2$):
\begin{assumptionBprime}\label{assum:vr_assum_standard} 
	For almost every $z$, the function $f(\cdot, z)$ is differentiable.   The noisy gradient $\nabla f(x,z)$ has finite second moment for any $x \in \RR^d$.  In addition, there exists $\zeta' \ge 0$ such that 
	for any $x, x' \in \RR^d$:
	\begin{align}\label{eqn:vr_assum_as}
		\|\nabla f(x,z)  -\nabla f(x',z)\|_* \le \zeta' \|x - x'\| \quad \text{almost surely.}
	\end{align} 
\end{assumptionBprime}
It is straightforward to verify that the almost sure Lipschitz gradient assumption~\eqref{eqn:vr_assum_as} implies that Assumption~\ref{assum:vr_assum} holds with $\zeta = 2\zeta'$. 
We now present some examples in which Assumptions~\ref{assum: non-quadratic} and~\ref{assum:vr_assum} hold.

\paragraph{Example 1: Quadratic optimization.}
Suppose that we want to optimize the function 
\begin{align*}
	F(x) = \frac{1}{2} x^\top A x + b^\top x,
\end{align*}
where $A \in \RR^{d\times d}$ is symmetric positive definite and $b \in \RR^d$ is a constant vector. Instead of accessing $A$ and $b$ directly, we only observe i.i.d. samples $(A_i, b_i) \sim P$ such that 
\begin{align*}
	\EE[A_i] = A \qquad \text{and} \qquad \EE[b_i] = b.
\end{align*}
We can define the sample objective function $f(x,\tilde A,\tilde b) = \frac{1}{2} x^\top \tilde A x + \tilde b^\top x$ and model the task as a stochastic optimization problem. When $\|\cdot\|$ is the $\ell_2$ norm, 
it is straightforward to verify that Assumption~\ref{assum: non-quadratic} holds with $\mu = \lambda_{\min}(A)$, $L = \lambda_{\max}(A)$, and $\liph = 0$. On the other hand, when $\|\cdot\| = \| \cdot \|_A$ is induced by the Hessian matrix $A$, we see that Assumption~\ref{assum: non-quadratic} holds with $\mu =L = 1$ and $\liph = 0$.

In general, Assumption~\ref{assum:vr_assum} holds with parameter $\zeta$ if $A_i$ is symmetric, $A_i x + b_i$ has finite second moment for any $x \in \RR^d$ and 
\begin{align*}
	\sup_{\|v\| = 1} \EE[\|(A_i - A) v\|_*^2] \le \zeta^2.
\end{align*}
In Section~\ref{sec:suboptimality}, we will pay particular attention to this family of problems since popular algorithms already exhibit suboptimality on such simple instances. \hfill $\clubsuit$

\paragraph{Example 2: Least-squares regression.}
Consider the following problem
\begin{align*}
	\min_{x\in \RR^d} F(x), \quad \text{where} \quad F(x) =  \frac{1}{2}  \cdot \EE_{(\xi,y) \sim P}[(y- \dotp{x,\xi})^2].
\end{align*}
Here, the sample objective function for a given $(\xi,y)$ is $f(x,\xi,y) = \frac{1}{2} (y - \dotp{x,\xi})^2$.
Let $H$ denote the second moment matrix of $\xi$, which is also the Hessian of $F$, i.e., 	$H = \EE_{(\xi,y) \sim P}[ \xi \xi^\top] = \nabla^2 F(x)$.	Suppose that $H \succ 0$ and that each coordinate of $\xi$ has finite second and fourth moments. 
These assumptions are standard and also appear in the literature~\cite[Section 2.1]{jain2018accelerating}. 

Since $H$ is positive definite, $F$ is a strongly convex quadratic function, and we denote its unique minimizer by $x^\star$. For a sample $(\xi,y)\sim P$, we denote the noise as $\epsilon = y - \dotp{\xi, x^\star}$. By the optimality conditions at $x^\star$, we have $\EE[\epsilon \xi] = 0$ and 
\begin{align*}
\Sigma = \cov_{(\xi,y)\sim P} (\nabla f(x^\star, \xi,y)) = \EE[\epsilon^2 \xi \xi^\top ].
\end{align*} Define the statistical condition number $\tilde \kappa$ as the smallest non-negative number such that
\begin{align}\label{eqn:stochastic_cond_number}
\EE[\|\xi\|_{H^{-1}}^2 \xi \xi^\top ]  \preceq  \tilde \kappa H.
\end{align}
Now consider the case that $\|\cdot\|$ is induced by the Hessian matrix $H$. Then it can be verified that Assumption~\ref{assum: non-quadratic} holds with $\mu =L = 1$ and $\liph = 0$. Moreover, for any $x,x'\in \RR^d$, we have
\begin{align*}
\EE[\|\nabla f(x,\xi,y) - \nabla f(x', \xi,y)\|_*^2] &= \EE[\|\nabla f(x,\xi,y) - \nabla f(x', \xi,y)\|_{H^{-1}}^2] \\
&= \EE[\|\dotp{x-x', \xi} \xi\|_{H^{-1}}^2]\\
&= (x-x')^\top \EE[\|\xi\|_{H^{-1}}^2 \xi \xi^\top ] (x-x')\\
&\le \tilde \kappa \|x-x'\|_H^2.
\end{align*} 
Therefore, Assumption~\ref{assum:vr_assum} holds with $\zeta^2 = \tilde \kappa$.
\hfill $\clubsuit$

\paragraph{Example 3: Regularized GLM.}
Let $(\xi_i,y_i)\in\RR^d\times\mathcal Y$ be i.i.d. samples from a joint distribution $P$ given by a generalized linear model (GLM).
In particular, assume $y_i\mid \xi_i$ follows an exponential-family distribution with natural (canonical) parameter $\theta_i$ and dispersion $r(\phi)>0$:
$$
p(y_i\mid \theta_i)
=\exp\left\{\frac{y_i\theta_i-u(\theta_i)}{r(\phi)}+s(y_i,\phi)\right\}.
$$
With the \emph{canonical link}, the linear predictor equals the natural parameter, i.e.,  $\theta_i=\langle x,\xi_i\rangle.$ Note that 
\begin{align*}
- \log p(y_i \mid \theta_i, \phi) = \frac{u(\theta_i) - y_i\theta_i}{r(\phi)} - s(y_i,\phi).
\end{align*} 
We consider the following regularized population risk
\begin{align}\label{eqn:generalized_linear_model}
\min_{x\in\RR^d}\; F(x)
:= \EE_{(\xi,y)\sim P}[\ell(x,\xi,y)] +  \frac{\lambda}{2}\|x\|_2^2 = \EE_{(\xi,y)\sim P}\left[u(\dotp{x,\xi}) - y \dotp{x,\xi}\right] + \frac{\lambda}{2}\|x\|_2^2,
\end{align}
where $\ell(x,\xi, y)$ is the per-sample negative log-likelihood function after rescaling. 
We consider the case that $\|\cdot\|$ is the $\ell_2$ norm and place the following assumptions:
\begin{assumption}\label{assum:generalized_linear_model}
Suppose that the following is true for the generalized linear model:
\begin{enumerate}
	\item $u$ is $C^2$-smooth, $\gamma$-strongly convex (where $\gamma$ = 0 means $u$ is convex), and $u'$ and $u''$ are $L_1$ and $L_2$ Lipschitz continuous, respectively. Denote the unique minimizer of $F$ by $x^\star$.
	\item  $\xi$ is a sub-Gaussian vector with parameter $\sigma^2$, i.e., for any unit vector $v \in \mathbb{S}^{d-1}$, the  norm $\|\dotp{v,\xi}\|_{\psi_2} \le \sigma$.  Moreover, $\EE[\xi \xi^\top] \succeq \sigma_{\min} I$.
	\item We have $\|y\|_{\psi_2} \le \sigma_y$ and $ \|u'(\dotp{x^\star, \xi})\|_{\psi_2} \le \sigma_\star$.
\end{enumerate} 
\end{assumption}
A notable special case of the GLM is logistic regression, in which we have the exponential family with
\[
r(\phi) = 1,\qquad
\theta_i = \langle x,\xi_i\rangle,\qquad
u(\theta) = \log\bigl(1 + e^{\theta}\bigr),\qquad
s(y,\phi) = 0.
\]
In particular, each $y_i$ takes values in the set $\{0, 1\}$ and the conditional density can be written as
\[
p(y_i \mid \theta_i)
= \exp\Bigl\{ y_i \theta_i - \log\bigl(1 + e^{\theta_i}\bigr) \Bigr\},
\qquad \theta_i = \langle x,\xi_i\rangle.
\]
The corresponding (unscaled) negative log-likelihood loss for a single sample is
\[
\ell(x,\xi_i,y_i)
= u(\langle x,\xi_i\rangle) - y_i \langle x,\xi_i\rangle
= \log\bigl(1 + e^{\langle x,\xi_i\rangle}\bigr) - y_i \langle x,\xi_i\rangle.
\]
Thus, the regularized population risk in~\eqref{eqn:generalized_linear_model}
specializes to
\[
F(x)
= \EE_{(\xi,y)\sim P}\!\left[\log\bigl(1 + e^{\langle x,\xi\rangle}\bigr)
- y \langle x,\xi\rangle\right]
+ \frac{\lambda}{2}\,\|x\|_2^2,
\]
which is the standard $\ell_2$-regularized logistic regression objective. If the distribution of each feature vector $\xi$ is standard Gaussian, one can verify that Assumption~\ref{assum:generalized_linear_model} holds with $\gamma = 0$, $L_1 = L_2 = \frac{1}{4}$, and $\sigma = \sigma_y = \sigma_\star =2.$

We now verify that Assumptions~\ref{assum: non-quadratic} and~\ref{assum:vr_assum} hold in the regularized GLM, recalling our choice $\| \cdot \| = \| \cdot \|_2$.  Note that for any $(\xi,y)$, we have
\begin{align*}
\nabla_x \ell(x,\xi, y)
&= (u'(\dotp{x,\xi})-y)\xi, \\
\nabla_{xx}^2 \ell( x,\xi ,y)
&= u''(\dotp{x,\xi})\xi\xi^\top,
\end{align*}
and hence
\begin{align*}
\nabla F(x) &= \EE\!\left[(u'(\dotp{x,\xi})-y)\xi\right] + \lambda x,\\
\nabla^2 F(x) &= \EE\!\left[u''(\dotp{x,\xi})\xi\xi^\top\right] + \lambda I.
\end{align*}

\noindent\emph{Verifying Assumption~\ref{assum: non-quadratic}.}
Since $u$ is $\gamma$-strongly convex, for any $x$, we have $\nabla^2 F(x)\succeq (\gamma  \sigma_{\min}+\lambda) I$,
so $F$ is $(\gamma  \sigma_{\min}+ \lambda)$-strongly convex. 
Next for any  $x,x'\in\RR^d$,
\begin{align*}
\|\nabla F(x)-\nabla F(x')\|_2 &= \|\EE[(u'(\dotp{x,\xi} )- u'(\dotp{x',\xi}))\xi] + \lambda (x - x')\|_2\\
&\le  (L_1\EE[\|\xi\|_2^2] + \lambda) \|x' - x\|_2.
\end{align*}
Finally,
there exists a universal constant $C>0$ such that
\begin{align*}
\|(\nabla^2 F(x^\star))^{-1}(\nabla^2 F(x)-\nabla^2 F(x'))\|_2 &\leq \frac{1}{\gamma  \sigma_{\min}+ \lambda} \cdot \sup_{v \in \mathbb{S}^{d-1}} |\EE[ |u''(\dotp{x,\xi})-u''(\dotp{x',\xi}) | \dotp{v,\xi}^2]| \\
&\le  \frac{1}{\gamma  \sigma_{\min}+ \lambda} \cdot \sup_{v \in \mathbb{S}^{d-1}} \EE[ L_2 \abs{\dotp{x-x',\xi}} \dotp{v,\xi}^2]\\
&\le  \frac{CL_2 \sigma^3 \|x - x'\|_2}{\gamma  \sigma_{\min}+ \lambda},
\end{align*}
where the  first inequality follows from the variational form of the operator norm and the fact that $\nabla^2 F(x^\star)\succeq (\gamma  \sigma_{\min}+ \lambda) I$, and the last inequality follows from properties of the sub-Gaussian norm. Thus, we have $\liph\leq \frac{ CL_2 \sigma^3}{\gamma  \sigma_{\min} + \lambda}$.

\noindent \emph{Verifying Assumption~\ref{assum:vr_assum}.} Define $f(x,\xi, y) = \ell(x,\xi,y) + \frac{\lambda}{2} \|x\|_2^2$. There exists some constant $C>0$ such that
\begin{align*}
&\EE[ \| (\nabla_x f(x,\xi, y) - \nabla F(x))  - (\nabla_x f(x', \xi, y) - \nabla F(x')) \|_2^2] \\
&\qquad = \EE[ \|u'(\dotp{x,\xi}) \xi  - u'(\dotp{x',\xi}) \xi  - \EE[u'(\dotp{x,\xi}) \xi  - u'(\dotp{x',\xi}) \xi  ] \|_2^2]\\
&\qquad \le   \EE[ \|u'(\dotp{x,\xi}) \xi  - u'(\dotp{x',\xi}) \xi \|_2^2] \\	
&\qquad \le   L_1^2 \EE[\|\dotp{x-x',\xi} \xi\|_2^2]\\
&\qquad \le CL_1^2 d\sigma^4 \|x-x'\|_2^2,
\end{align*}
where the last inequality follows from Hölder's inequality together with a standard fourth-moment bound for sub-Gaussian random variables; see, e.g.,~\cite[Proposition 2.6.6]{vershynin2018high}.  
\hfill $\clubsuit$

Having set the stage, we now examine the instance-dependent performance of existing methods more closely on a specific example.

\section{Warmup: Quadratic optimization} \label{sec:suboptimality}
In this section, we use quadratic optimization as a testbed to show how existing algorithms fail to attain instance-optimal performance. We begin by establishing a non-asymptotic, instance-dependent lower bound for this class of problems. We then show that neither SAA nor (averaged) SA achieves this lower bound. 

\subsection{Information-theoretic lower bound}

In our lower bounds, we focus on the $\ell_2$ norm, setting $\| \cdot \| = \| \cdot \|_2$.
To derive a meaningful non-asymptotic local minimax lower bound, we must carefully specify the class of problem instances over which worst-case risk is evaluated. 
To this end, we first fix a strongly convex quadratic population objective $F(x) = \frac{1}{2} x^\top A x + b^\top x$. For any sample size $n$ and symmetric positive semi-definite matrix $\Sigma$, we define the collection of instances 
\begin{align*}
\nclass(n, F, \Sigma) :=  \left\{ (f,P) \left| 
\begin{aligned}
	&\text{ $\nabla^2 F_{f,P} \equiv \nabla^2 F$ and $\|x^\star(F_{f,P}) - x^\star(F)\|_2 \le 2\cdot\sqrt{\frac{\trace{A^{-1} \Sigma A^{-1}}}{n}}$,} \\
	& \text{ $\nabla f(x^\star(F_{f,P}),z)$ has distribution  $N(0, \Sigma)$ when $z \sim P$.}
\end{aligned}
\right.\right\}.
\end{align*}
This construction captures three essential constraints. First, all problem instances $(f,P)$ in $\nclass(n, F, \Sigma)$ are close to the population reference problem, in that they have population objectives that are quadratic with identical Hessian structure as the reference problem $F$. Second, the minimizers of these population objectives lie within a shrinking neighborhood of $x^\star(F)$, with the neighborhood radius decreasing at a rate $O(n^{-\frac{1}{2}})$. The specific radius $2\cdot\sqrt{\frac{\trace{A^{-1} \Sigma A^{-1}}}{n}}$ is carefully chosen to match the scale of our lower bound, as will become apparent in the theorem statement. Third, the gradient noise structure is precisely controlled: at each instance's minimizer, the gradient noise follows a mean-zero Gaussian distribution with covariance $\Sigma$.

Our instance class $\nclass(n, F, \Sigma)$ occupies a level of granularity between existing notions of global problem classes (used to assess global minimax risk) and local neighborhoods considered in asymptotic lower bounds~\cite{duchi2021asymptotic}. In particular, we focus on problem instances sharing a fixed population Hessian structure and noise geometry, without insisting that the (stochastic) instance be close in KL divergence to the reference $P$ (cf. Eq.~\eqref{eqn:asymp_lb}). As a result, the neighborhood is fine enough to capture the local geometric term $\Lambda$ from the asymptotic lower bound~\eqref{eqn:asymp_lb}, but coarse enough to enable tractable analysis with finite sample size $n$. 

To state our result, we require some setup. Let $\widehat \cX_n$ be the set of estimators based on $n$ samples and the sample objective function, i.e., each $\widehat x_n \in \widehat \cX_n$ is a map taking $(\{z_i\}_{i=1}^n, f)$ as inputs, and $\widehat x_n(\cdot, f)$ is a measurable map from $\cZ^{n}$ to $\RR^d$ for any fixed $f$. We remove parentheses and simply write $\widehat x_n$ when it is clear from context.
\begin{proposition}\label{prop: mainlowerboundtheorem_quadratic_intro}
Let \(F\) be a quadratic function with Hessian matrix \(A\) that satisfies Assumption~\ref{assum: non-quadratic} with parameters \(L \ge \mu > 0\) and \(\liph = 0\). For any  positive semi-definite covariance matrix $\Sigma$ and integer $n \ge 1$, we have 
\begin{align}\label{eqn:non-asymp_quadratic}
	&\quad \inf_{\widehat x_n \in \widehat \cX_n} \sup_{(f,P)\in \nclass(n,F,\Sigma)} \EE[\|\widehat x_n - x^\star(F_{f, P})\|_2^2] \ge
	\frac{ \trace{A^{-1} \Sigma A^{-1}}}{4(\pi^2 +1) n}.
\end{align}
In addition, there is a stochastic first-order method such that  for any $(f,P) \in \nclass(n,F,\Sigma)$, when Assumption~\ref{assum:vr_assum} holds with parameter $\zeta$,  the output $\widehat x_n^{\texttt{(FOM)}}$ using $n$ stochastic gradients satisfies 
\begin{align*}
	\EE[ \|\widehat x_n^{\texttt{(FOM)}} - x^\star(F_{f,P})\|_2^2 ]  \le 	\frac{C \cdot  \trace{A^{-1} \Sigma A^{-1}}}{ n} \quad \text{for any $n  = \tilde \Omega \left(\sqrt{\frac{L}{\mu}} +\frac{\zeta^2}{\mu^2} \right)$},
\end{align*}
where $C$ is a positive universal constant.
\end{proposition}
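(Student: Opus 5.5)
The lower bound reduces to a Bayesian Gaussian location problem; the upper bound will be borrowed from the paper's later guarantee for the accelerated variant of $\algname$ on quadratics.

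\emph{Subfamily for the lower bound.} Take instances $f(x,z) = \frac12 x^\top A x + z^\top x$ with $z \sim N(-A\theta, \Sigma)$, indexed by $\theta$. Then $F_{f,P}(x) = \frac12 x^\top A x - \theta^\top A x$, so its Hessian is $A$ and its minimizer is $x^\star = \theta$. The gradient at the minimizer is $A\theta + z \sim N(0,\Sigma)$. Hence every $\theta$ with $\|\theta - x^\star(F)\|_2 \le r := 2\sqrt{\trace{A^{-1}\Sigma A^{-1}}/n}$ gives an instance in $\nclass(n,F,\Sigma)$.

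\emph{Reduction to location estimation.} The estimator sees $f$ (which is known and common to all instances) and $z_1,\dots,z_n$. The sample mean $\bar z$ is sufficient, so the problem is equivalent to observing $Y = A^{-1}\bar z$ plus a sign flip: we observe $\tilde Y \sim N(\theta, \Lambda/n)$ with $\Lambda = A^{-1}\Sigma A^{-1}$, and want to estimate $\theta$ in squared $\ell_2$ loss.

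\emph{Bayesian lower bound.} Lower bound the minimax risk by a Bayes risk under a prior on the ball $\{\|\theta - x^\star(F)\|_2 \le r\}$, and use the van Trees inequality with a smooth compactly supported prior. Diagonalize $\Lambda = \sum_j \lambda_j u_j u_j^\top$ and write $\theta - x^\star(F) = \sum_j t_j u_j$. Put independent priors on the $t_j$, each supported on $[-r_j, r_j]$ with $r_j = \sqrt{\lambda_j/n}$ and density $\propto \cos^2(\pi t/(2r_j))$. This density has Fisher information $\pi^2/r_j^2$. The support is admissible because $\sum_j r_j^2 = \trace{\Lambda}/n \le r^2$. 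The van Trees inequality then gives, coordinatewise,
\[
\EE[(\widehat t_j - t_j)^2] \;\ge\; \frac{1}{n/\lambda_j + \pi^2 n/\lambda_j} \;=\; \frac{\lambda_j}{(1+\pi^2)n}.
\]
Summing over $j$ yields $\trace{\Lambda}/((1+\pi^2)n)$, which is even better than the stated factor $1/(4(\pi^2+1))$; the $4$ leaves room for choosing $r_j$ more conservatively.

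\emph{Degenerate directions.} If some $\lambda_j = 0$, that direction contributes nothing to either side and can be dropped. In particular, when $\Sigma = 0$ the lower bound is trivial.

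\emph{Main obstacle in the lower bound.} The main point is to justify that the estimator class $\widehat\cX_n$, which is allowed to depend arbitrarily on $f$, loses nothing. Since $f$ is identical across the subfamily, it carries no information about $\theta$, so this is fine. One also has to verify the regularity conditions of van Trees for measurable estimators. Both are routine, and the only care needed is making sure $\sum_j r_j^2 \le r^2$.

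\emph{Upper bound.} This part I will not prove here; it follows from the paper's later guarantee (Theorem~\ref{thm:vr_quadratic}) for the accelerated variant of $\algname$ specialized to quadratics. That guarantee gives error $\lesssim \trace{A^{-1}\Sigma A^{-1}}/n$ once $n \gtrsim_{\log} \sqrt{L/\mu} + \zeta^2/\mu^2$. The term $\sqrt{L/\mu}$ comes from the accelerated inner loop, which drives the optimization error below the statistical error. The term $\zeta^2/\mu^2$ comes from needing the multiplicative-noise contribution of the variance-reduced gradient estimates to be dominated by the $\Sigma$ term. Any instance in $\nclass$ satisfies Assumption~\ref{assum: non-quadratic} with the same $L$ and $\mu$ as $F$, since they share the Hessian $A$, so that theorem applies uniformly over the class.
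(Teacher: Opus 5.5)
Your proposal is correct and follows the paper's route. The paper uses the same Gaussian linear-perturbation subfamily and, with $Q=V$ taken from the SVD of $A^{-1}\Sigma^{1/2}$, exactly your product $\cos^2$ prior along the eigenvectors of $A^{-1}\Sigma A^{-1}$; it then applies the Bayesian Cram\'er--Rao (van Trees) bound, which, tracked in the linear case, also gives your constant $1/(\pi^2+1)$. Your treatment of zero eigenvalues also covers the singular-$\Sigma$ case, which Theorem~\ref{thm: mainlowerboundtheorem_intro} (stated for positive definite $\Sigma$) leaves implicit.

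One correction for the upper bound: the accelerated guarantee you need is Theorem~\ref{thm:vr_quadratic_ac}, used via Corollary~\ref{cor:complexity_quad_ASGD}. Theorem~\ref{thm:vr_quadratic} is the SGD version and only yields the threshold $L/\mu + \zeta^2/\mu^2$.
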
 
This proposition is an immediate consequence of Corollary~\ref{cor:complexity_quad_ASGD} and Theorem~\ref{thm: mainlowerboundtheorem_intro}, which we state shortly. In contrast to the asymptotic optimality result of~\cite{duchi2021asymptotic}, Proposition~\ref{prop: mainlowerboundtheorem_quadratic_intro} is non-asymptotic. In particular, it simultaneously establishes the tightness of the lower bound and the optimality of the stochastic first-order method whenever \(n\) is bounded below by an explicit problem-dependent quantity of order
$\sqrt{\frac{L}{\mu}} + \frac{\zeta^2}{\mu^2}.$
This lower bound on \(n\) appears to be necessary. On the one hand, the term \(\sqrt{\frac{L}{\mu}}\) is required for any first-order method to minimize a smooth, strongly convex function, by classical oracle complexity results~\cite{complexity}. On the other hand, the necessity of the term $\frac{\zeta^2}{\mu^2}$ has also been observed in prior work; see, e.g.,~\cite{khamaru2021temporal, li2022root,li2023accelerated,mou2023optimal}. Indeed, consider the sample objective \(f(x,a)=\frac{a}{2}x^2\) and the testing problem between \(a\sim N(0,\zeta^2)\) and \(a\sim N(\mu,\zeta^2)\). By a standard application of Le Cam's method, these two hypotheses cannot be distinguished with fewer than $\Omega(\frac{\zeta^2}{\mu^2})$ samples. Thus, even in the one-dimensional setting, multiplicative noise creates an information-theoretic barrier: with fewer than \(\Omega(\frac{\zeta^2}{\mu^2})\) samples, one cannot reliably distinguish the population objective \(x\mapsto \frac{\mu}{2}x^2\) from the constant zero function. This implies that no reasonable algorithm can be expected to make meaningful progress before the sample size reaches this scale.

Moreover, note that this optimality result is inherently local. It shows that, when $n$ is sufficiently large, the stochastic first-order method is minimax optimal over a neighborhood consisting of instances whose minimizers are at most $O\left(\sqrt{\frac{\trace{A^{-1}\Sigma A^{-1}}} {n}}\right)$ away from the solution to the given problem. Importantly, this neighborhood radius matches our lower bound up to a universal constant, showing that the result is local in the sharpest possible sense.

Having established these local fundamental limits, we now examine the performance of Sample Average Approximation (SAA) and robust (averaged) Stochastic Approximation (SA) methods. We demonstrate that both approaches can fail to achieve the lower bound~\eqref{eqn:non-asymp_quadratic}, even when provided with $\Omega\left(\frac{\zeta^2}{\mu^2}\right)$ samples.
\subsection{Arbitrarily large  error of SAA}\label{sec:SAA_fail}

To see why SAA can fail even on a basic quadratic optimization problem,
consider the  one-dimensional  problem where $A_i \overset{i.i.d.}{\sim} N(1,1)$ and $b_i \overset{i.i.d.}{\sim} N(-1, 1)$ are drawn independently of each other. Suppose $\| \cdot \| = \| \cdot \|_2$, which reduces to the absolute value in the one-dimensional case. It is straightforward to verify that Assumption~\ref{assum:vr_assum} holds with $\zeta = 1$. A straightforward calculation shows that
\begin{align*}
F(x) = \frac{x^2}{2} - x, \qquad \text{and} \qquad x^\star = 1.
\end{align*}
Given $n$ i.i.d. samples $\{(A_i, b_i)\}_{i=1}^{n}$, the unique critical point of the sample objective is 
$$
\cp = -  \bar b_n/\bar A_n,
$$
where $\bar b_n =  \frac{1}{n} \sum_{i=1}^{n} b_i \sim N(-1, \frac{1}{n}) $ and $\bar A_n = \frac{1}{n} \sum_{i=1}^{n} A_i \sim N(1, \frac{1}{n})$. The SAA estimator (defined as the set of minimizers of the sample loss) is given by 
$$\xSAA_n =\begin{cases}
\cp &  \text{if $\bar A_n > 0$} \\
\pm \infty &  \text{if $\bar A_n <  0$} \\
\mathbb{R} &\text{otherwise}.
\end{cases}.$$ 
No matter which point in this set is chosen, the $\ell_2^2$ error of the SAA estimator is pointwise larger than the $\ell_2^2$ error of $\cp$. Moreover, we have
\begin{align*}
\EE[|\cp - x^\star(F)|^2]
&= \EE\left[\left(\frac{\bar b_n + \bar A_n}{\bar A_n}\right)^2\right] \\
&= \int_{-\infty}^{\infty}\int_{-\infty}^{\infty}
\left(\frac{x+y}{1+x}\right)^2
\frac{n}{2\pi}
e^{-\frac{n}{2}(x^2+y^2)}
\, dx\, dy \\
&= \int_{-\infty}^{\infty}
\frac{1}{(1+x)^2}
\left(
\int_{-\infty}^{\infty} (x+y)^2 \sqrt{\frac{n}{2\pi}} e^{-\frac{n}{2}y^2}\,dy
\right)
\sqrt{\frac{n}{2\pi}} e^{-\frac{n}{2}x^2}\,dx \\
&= \int_{-\infty}^{\infty}
\frac{x^2+\frac{1}{n}}{(1+x)^2}
\sqrt{\frac{n}{2\pi}} e^{-\frac{n}{2}x^2}\,dx \\
&\ge
\frac{1}{n}
\int_{-\infty}^{\infty}
\frac{1}{(1+x)^2}
\sqrt{\frac{n}{2\pi}} e^{-\frac{n}{2}x^2}\,dx \\
&= \infty,
\end{align*}
where in the second equality we used the change of variables
\[
x=\bar A_n-1,
\qquad
y=\bar b_n+1.
\]
This implies the whole integral diverges.
Therefore, SAA can incur arbitrarily large $\ell^2_2$ error for any sample size $n$. 
Similar issues persist if the norm is given by other natural choices.

\subsection{Sub-optimality of robust SA}\label{sec:suboptimality_of_SA}

We now turn to vanilla stochastic approximation (SA) (also called the stochastic gradient method) as well as a popular variant that averages the iterates of this algorithm. Recall that vanilla SA is initialized at some point $x_1 \in \mathbb{R}^d$, and for a sequence of nonnegative stepsizes $\{\eta_k\}_{k \geq 1}$, it executes the iteration
\begin{equation}\label{eqn:SGD}
x_{k+1} =x_k - \eta_k \nabla f(x_k, z_k),
\end{equation} 
where $z_k$ is a new sample drawn from the distribution $P$.

To stabilize the algorithm, we average the iterates---an idea dating back to Ruppert, Polyak, and Juditsky---to obtain \emph{robust SA}, given by either
\begin{align}
\rpj = \frac{1}{n}\sum_{k=1}^{n} x_k \text{ or its tail-averaged variant } \rpj = \frac{1}{\floor{n/2}}\sum_{k=\ceil{n/2}}^{n} x_k.
\end{align}

The first question is whether stochastic gradient descent (SGD) in~\eqref{eqn:SGD} provides an instance-optimal estimator. This question has received significant interest, and even asymptotically, the answer is negative. Bach and Moulines~\cite[Theorem 2]{moulines2011non} showed that for $\beta \in (0,1)$, the last iterate of SA exhibits the suboptimal convergence rate
\begin{align*}
\EE[\|\sqrt{n}(x_n - x^\star)\|_2^2] \rightarrow \infty,
\end{align*}
and when $\beta = 1$, the classical result of Fabian~\cite[Theorem 3.4]{fabian1968asymptotic} shows that even when the problem is quadratic, the asymptotic distribution of $\sqrt{n}(x_n - x^\star)$ is mean-zero Gaussian with a sub-optimal covariance matrix (see also~\cite{benveniste2012adaptive}). %

Let us now turn to robust SA, which is known to satisfy asymptotic optimality~\cite{polyak1992acceleration,duchi2021asymptotic}. To investigate its non-asymptotic properties, we undertake a systematic study of the problem class introduced in~\eqref{eqn:example_break_robust_SA}. 
This is a family of quadratic problems parametrized by $\zeta$, such that for any $\zeta \ge 1$, we have $ x^\star = \begin{bmatrix}
1\\
1
\end{bmatrix}$ and the optimal covariance is always  $\Lambda = \begin{bmatrix}
1 & 0\\
0&  1
\end{bmatrix}$. Moreover, when selecting $\|\cdot\|$ as the $\ell_2$ norm, it is straightforward to verify that Assumption~\ref{assum: non-quadratic} holds with $\mu=1, L = \zeta^2$, and $\liph = 0$, and Assumption~\ref{assum:vr_assum} holds with parameter $2\zeta$.
According to Proposition~\ref{prop: mainlowerboundtheorem_quadratic_intro} and the discussion following it, the ``best" estimator $\widehat x_n$  should have $\EE[\|\sqrt{n}(\widehat x_n -x^\star)\|_2^2]$ bounded by a universal constant once $n$ is at the order of $ \Omega(\zeta^2)$.
However,  we will show in numerical experiments that $O(\zeta^2)$ samples are not sufficient for robust SA to achieve an expected squared error independent of $\zeta$. We examine two typical stepsize choices: constant and diminishing stepsizes of the form $\eta_k = \eta k^{-\beta}$.
\paragraph{Constant stepsize.} We compare the performance of averaging with constant stepsize with variance reduction. We report the numerical results for $\eta \in \{\frac{1}{\zeta^4}, \frac{1}{\zeta^3}, \frac{1}{\zeta^2}\}$ and $\beta = 0$ in Figure~\ref{fig:constant_stepsize}. %
\begin{figure}[ht!]
\centering
\begin{minipage}{0.88\textwidth}\
\begin{adjustbox}{valign=t}
	\begin{minipage}{0.65\linewidth}
		\centering
		\includegraphics[width=\linewidth]{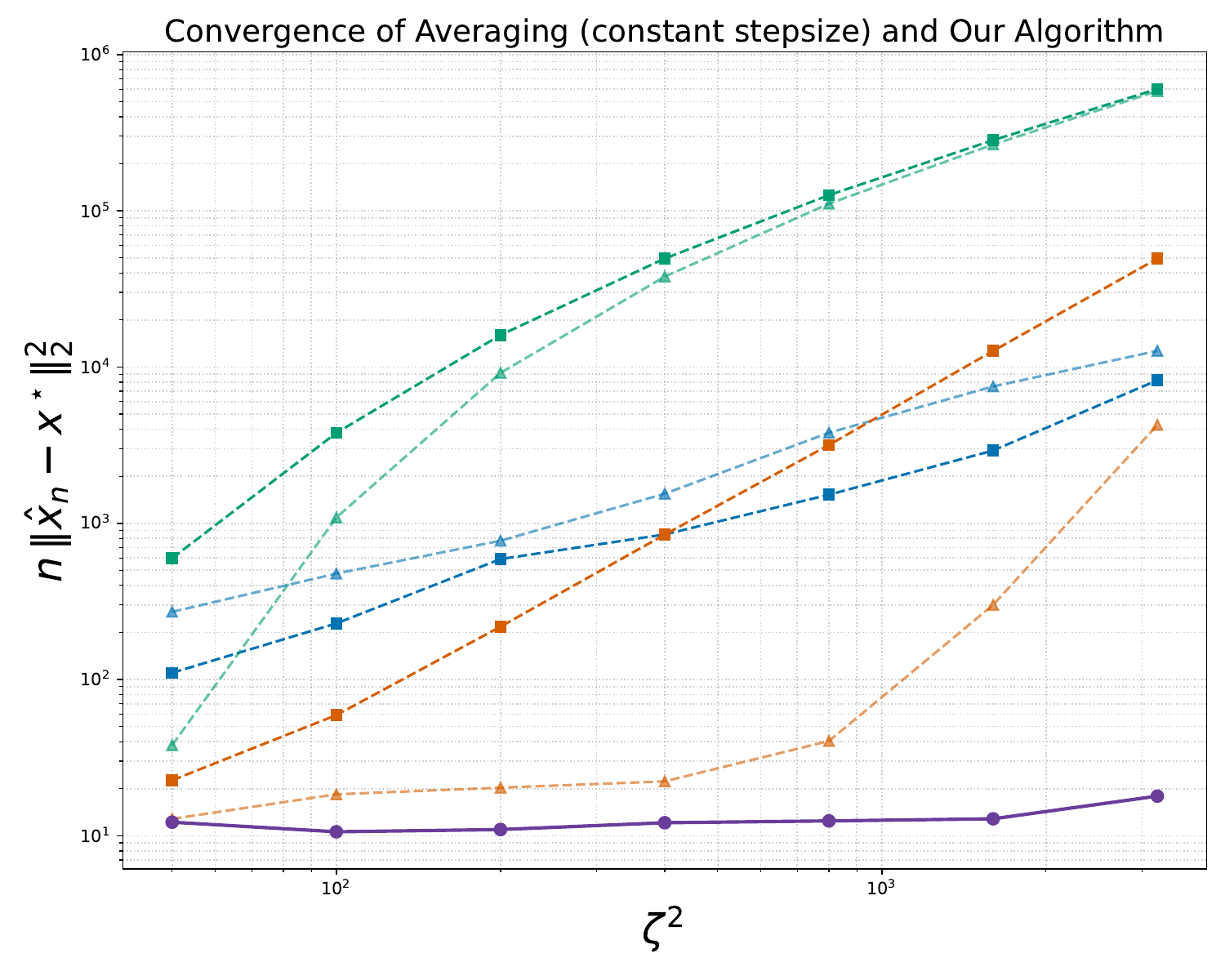}
	\end{minipage}
\end{adjustbox}
\hfill
\begin{adjustbox}{valign=t}
	\begin{minipage}{0.3\linewidth}
		\centering
		\includegraphics[width=1.2\linewidth]{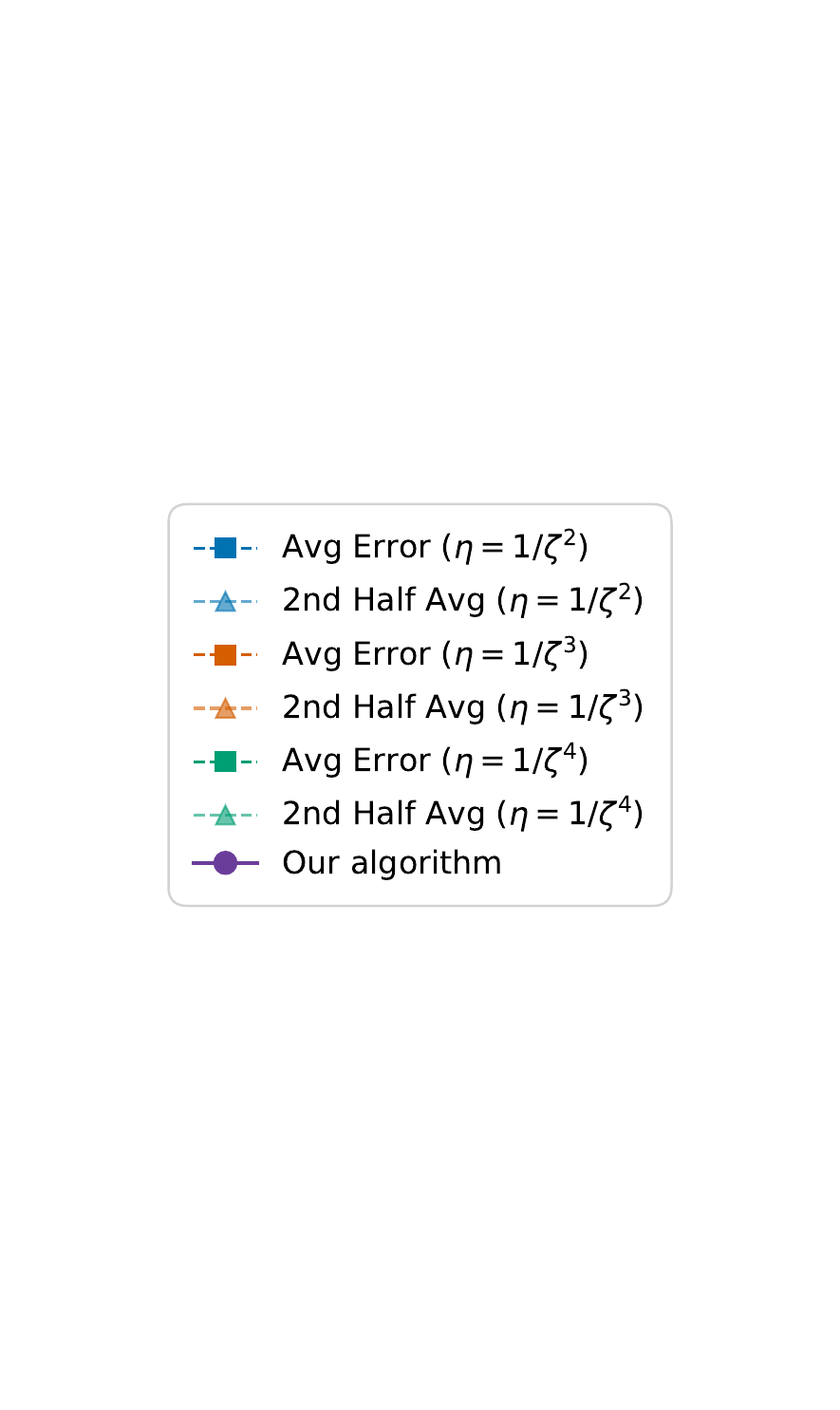}
	\end{minipage}
\end{adjustbox}

\caption{ \small Comparison of averaging (constant stepsize) and our algorithm.  All the algorithms are initialized at the origin. The total number of samples is $n = 200\cdot \zeta^2$. The error (y-axis) $n\|\widehat x_n -x^\star\|_2^2$ is averaged over 100 runs, where $\widehat x_n$ denotes the output of each algorithm with a certain parameter setting.}
\label{fig:constant_stepsize}
\end{minipage}
\end{figure}

\paragraph{Diminishing stepsize.} We compare the performance of averaging with diminishing stepsize with variance reduction. We run experiments for $\eta \in \{\frac{1}{\zeta}, \frac{1}{\zeta^2},  \frac{1}{\zeta^3}  \}$ and $\beta \in \{0.2, 0.5,0.8\}$, and report the algorithm performance in Figure~\ref{fig:diminishing_stepsize}. When  $\eta = \frac{1}{\zeta}$, the iterates completely blow up and cannot be plotted, so we do not include them. 
\begin{figure}[ht!]
\centering
\begin{minipage}{0.88\textwidth}
\centering 
\begin{adjustbox}{valign=t}
	\begin{minipage}{0.65\linewidth}
		\centering
		\includegraphics[width=\linewidth]{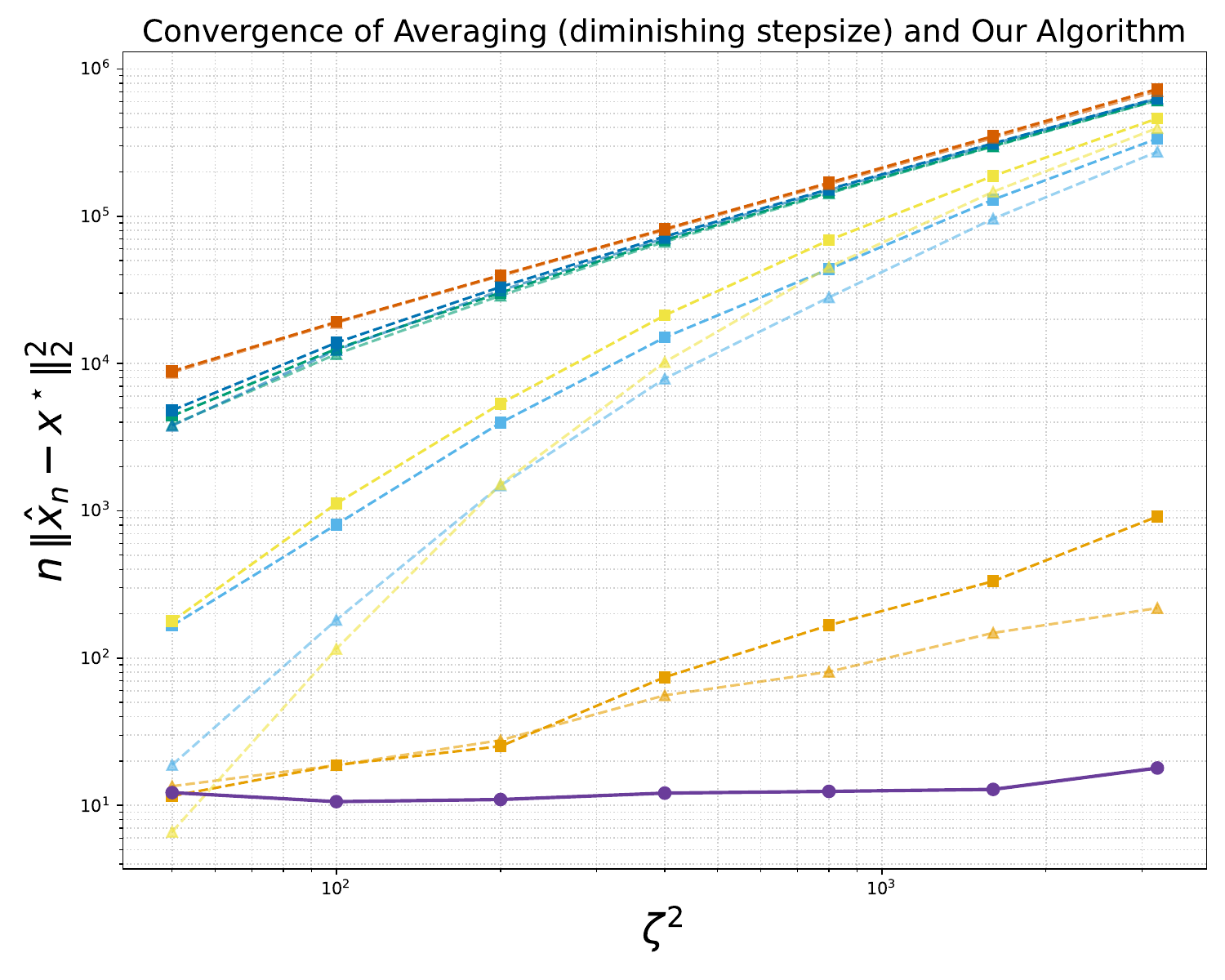}
	\end{minipage}
\end{adjustbox}
\hfill
\begin{adjustbox}{valign=t}
	\begin{minipage}{0.3\linewidth}
		\centering
		\includegraphics[width=1.2\linewidth]{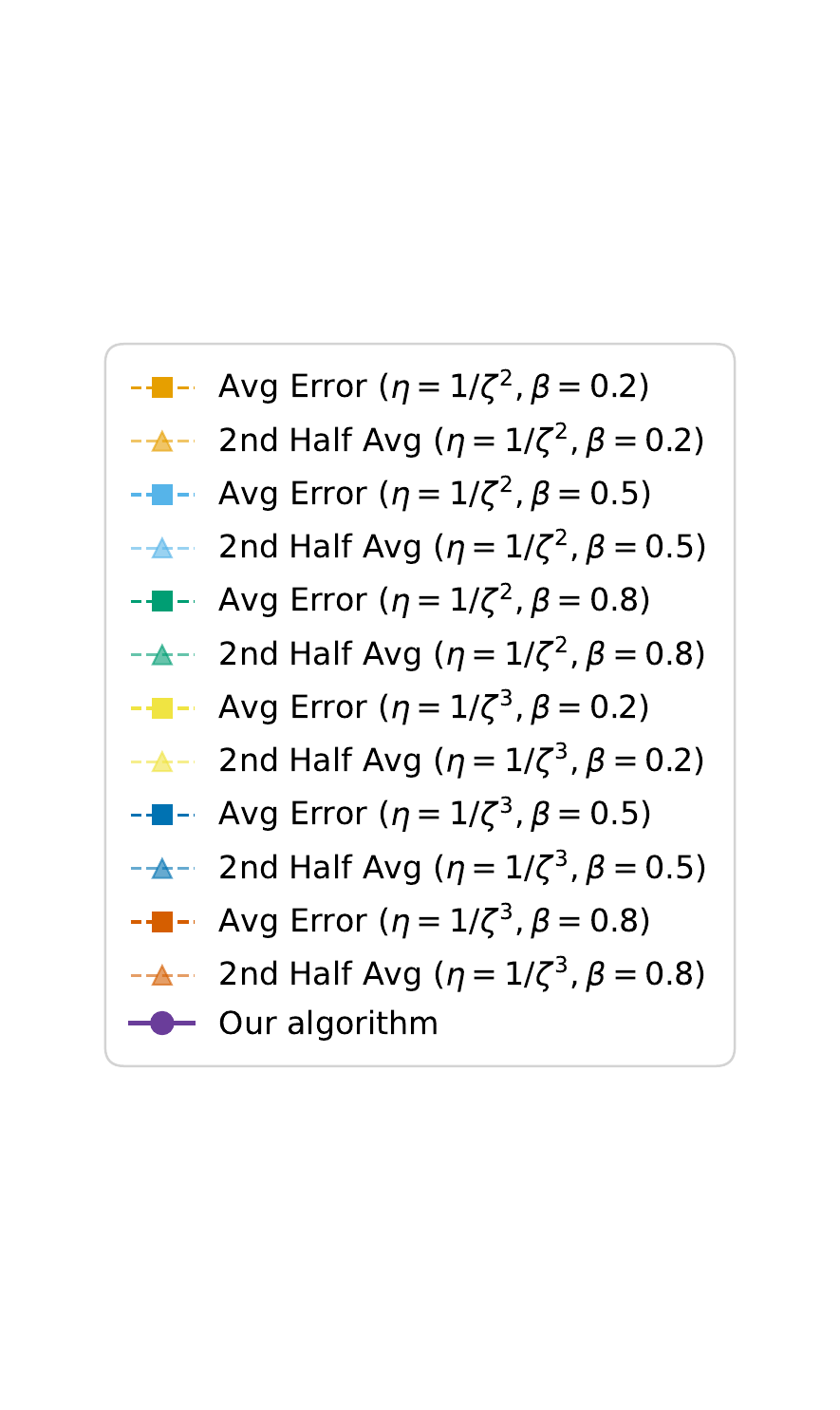}
	\end{minipage}
\end{adjustbox}

\caption{ \small Comparison of averaging (diminishing stepsize) and our algorithm.  All the algorithms are initialized at the origin. The total number of samples is $n = 200\cdot \zeta^2$. The error (y-axis) $n\|\widehat x_n -x^\star\|_2^2$ is averaged over 100 runs, where $\widehat x_n$ denotes the output of each algorithm with a certain parameter setting.}
\label{fig:diminishing_stepsize}
\end{minipage}
\end{figure}

\medskip

For averaged SA, our observations in Figures~\ref{fig:constant_stepsize} and~\ref{fig:diminishing_stepsize} reveal an interesting pattern: while the asymptotic convergence rate remains constant, the size of $n\|\widehat x_n - x^\star\|_2^2$ increases as we increase $\zeta$. This suggests that  $200 \cdot \zeta^2$ samples are insufficient for averaged SA to achieve the non-asymptotic lower bound~\ref{prop: mainlowerboundtheorem_quadratic_intro}. In contrast, our proposed algorithm (introduced in the following section) maintains a constant value of $n\|\widehat x_n - x^\star\|_2^2$ that matches the asymptotic lower bound, regardless of the value of $\zeta$.

\section{Our general framework and inner-loop algorithms}\label{sec:better_algorithm}

Having seen that both SAA and robust SA can be non-asymptotically suboptimal even on simple instances, we now introduce our algorithmic framework for approaching instance-optimality. The general framework, presented as Algorithm~\ref{alg:metaVR}, is actually a family of algorithms that involves a Variance Reduction wrap-around for Instance-optimal Stochastic Optimization ($\algname$). Concretely, the variance reduction device wraps around a base algorithm that is executed epoch-wise in an inner loop. For the inner loop, suppose we have access to an algorithm $\mathcal{A}$ (such as the stochastic gradient method) that takes as input a general sample-wise objective $g$, initial point $\widetilde{x}$, and runs for $T$ iterations---using $T$ fresh samples from the distribution $P$---to produce the iterate $\mathcal{A}(\widetilde{x}, g, T)$.

\begin{algorithm}[H]
\caption{$\algname (\widehat x_0, \{N_k\}_{k=1}^{K}, T, \cA)$ }
\label{alg:metaVR}
\begin{algorithmic}[1]
\State {\bfseries Input}  Initialization  $\widehat x_0 \in \RR^d$,  $\{N_k\}_{k=1}^{K}$, $T \ge 0$ , a stochastic optimization algorithm $\cA$ 
\For {$k = 1,\ldots, K$}
\State Set $ \tilde x = \widehat x_{k-1}$. Collect $N_k$ new samples $\{z_i^k\}_{i=1}^{N_k}$ .
\State Calculate 
\begin{align}\label{eqn:averging_meta}
	\widehat{\nabla} f(\tilde x) = \frac{1}{N_k} \sum_{i=1}^{N_k} \nabla f(\tilde x, z_i^k).
\end{align} 
\State Set $g(x,z) = f(x,z) - \langle \nabla f(\tilde x, z) - \widehat \nabla f(\tilde x), x \rangle$.
\State $\widehat x_k = \cA(\tilde x, g, T)$ \label{eqn:subroutine}
\EndFor
\State \Return $\widehat x_K$.
\end{algorithmic}
\end{algorithm}	
In more detail, the $\algname$ algorithm proceeds in epochs and consists of two key steps:
\begin{enumerate}
\item At the beginning of the $k$-th epoch, the algorithm collects $N_k$ fresh samples to compute the averaged gradient $\widehat \nabla f$ at the current base point $\tilde x$. 
\item\label{step:II} Within the $k$-th epoch, we run the inner-loop algorithm $\cA$ designed to optimize the following (population) objective using samples:
\begin{equation*}
\min_{x \in \RR^d} G(x) =  \EE_z [g(x,z)] =  F(x) - \dotp{\nabla F(\tilde x) - \widehat \nabla f(\tilde x), x}.
\end{equation*} 
\end{enumerate} 
Under Assumption~\ref{assum: non-quadratic}, function $G$ is smooth and strongly convex, so Step~\ref{step:II}  is equivalent to finding $\underline x$ such that $\nabla G(\underline x) = 0$, i.e.,
\begin{align} \label{eqn: underline_x_def}
\nabla F(\underline x) - (\nabla F(\tilde x) - \widehat \nabla f(\tilde x)) = 0.
\end{align}

Our main algorithm depends critically on two design choices: (i) selecting the sample sizes $N_k$ so that the epoch-wise population solution $\underline{x}$ approaches $x^\star$ at the desired rate, and (ii) choosing a proper stochastic optimization subroutine $\cA$ that transfers this progress from $\underline{x}$ to the epoch output. The next two subsections develop these choices in parallel. Section~\ref{subsec:epoch_solution} addresses (i) by quantifying how $N_k$ controls the error $\EE[\|\underline{x}-x^\star\|^2]$. Section~\ref{sec:stochastic_opt_subroutine} addresses (ii) by presenting two stochastic optimization subroutines: vanilla SGD as well as an accelerated variant.

\subsection{Analysis of the epoch-wise population solution}\label{subsec:epoch_solution}
Recall that for any epoch $k \in [K]$, $\tilde x$ is the initialization within this epoch. Throughout this subsection, we let vector $\underline{x}$ be the solution to~\eqref{eqn: underline_x_def}.
Under Assumption~\ref{assum: non-quadratic}, such a solution exists uniquely since $F$ is smooth and strongly convex.  We first state and prove the result when the population objective is a quadratic function. 	\begin{lemma}\label{lem:vr_quadratic_key}
Suppose that  Assumption~\ref{assum: non-quadratic} holds with parameters  $\liph = 0$ and  $L \ge \mu = \lmin >0$,   and that Assumption~\ref{assum:vr_assum} holds. We write $A = \nabla^2 F(x^\star)$. For any fixed epoch $k$, let $\underline{x}$ be defined as in equation~\eqref{eqn: underline_x_def}. We have
\begin{align}\label{eqn:vr_quadratic_key}
\EE[\|\underline{x} - x^\star\|^2 \mid \tilde x] \le \frac{2}{N_k}\EE[\|A^{-1}\nabla f(x^\star,z)\|^2]+ \frac{2 \zeta^2}{N_k \mu^2 } \|\tilde x - x^\star\|^2.
\end{align}
\end{lemma}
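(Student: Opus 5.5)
Because $F$ is quadratic with $\liph = 0$, the defining equation~\eqref{eqn: underline_x_def} is linear and can be solved in closed form. We have $\nabla F(x) = A(x - x^\star)$ for every $x$, so~\eqref{eqn: underline_x_def} becomes
\begin{align*}
A(\underline{x} - x^\star) = A(\tilde x - x^\star) - \nabla F(\tilde x) + \widehat\nabla f(\tilde x) = \widehat\nabla f(\tilde x) - \nabla F(\tilde x).
\end{align*}
This yields the exact representation
\begin{align*}
\underline{x} - x^\star = \frac{1}{N_k}\sum_{i=1}^{N_k} A^{-1}\bigl(\nabla f(\tilde x, z_i^k) - \nabla F(\tilde x)\bigr).
\end{align*}
Conditionally on $\tilde x$, this is an average of i.i.d. mean-zero vectors, because the samples $\{z_i^k\}$ are fresh and independent of $\tilde x = \widehat x_{k-1}$.

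Next I split each summand using $\nabla F(x^\star) = 0$:
\begin{align*}
\nabla f(\tilde x, z) - \nabla F(\tilde x) = \nabla f(x^\star, z) + w(z), \qquad w(z) := \bigl(\nabla f(\tilde x, z) - \nabla F(\tilde x)\bigr) - \bigl(\nabla f(x^\star, z) - \nabla F(x^\star)\bigr).
\end{align*}
Both pieces are mean zero given $\tilde x$. I then apply $\|a+b\|^2 \le 2\|a\|^2 + 2\|b\|^2$ to the two averaged pieces separately.

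Since $\|\cdot\|$ is induced by an inner product, the cross terms in the squared norm of an average of i.i.d. mean-zero vectors vanish in expectation. Hence $\EE\|\frac{1}{N}\sum_i v_i\|^2 = \frac{1}{N}\EE\|v_1\|^2$. Applied to $v_i = A^{-1}\nabla f(x^\star, z_i^k)$, this gives the first term $\frac{2}{N_k}\EE[\|A^{-1}\nabla f(x^\star,z)\|^2]$. Applied to $v_i = A^{-1}w(z_i^k)$, it gives $\frac{2}{N_k}\EE[\|A^{-1}w(z)\|^2 \mid \tilde x]$.

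For the second term, the definition of $\lmin$ in Assumption~\ref{assum: non-quadratic} with $v = A^{-1}u$ gives $\|A^{-1}u\| \le \|u\|_*/\lmin = \|u\|_*/\mu$. Then Assumption~\ref{assum:vr_assum} with $x = \tilde x$ and $x' = x^\star$ gives $\EE[\|w(z)\|_*^2 \mid \tilde x] \le \zeta^2\|\tilde x - x^\star\|^2$. Together these yield $\frac{2\zeta^2}{N_k\mu^2}\|\tilde x - x^\star\|^2$, completing~\eqref{eqn:vr_quadratic_key}.

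The only delicate step is justifying the orthogonality of the cross terms. This needs the primal norm to be a Hilbert norm, so that $\|\cdot\|^2 = \dotp{\cdot, Q\,\cdot}$ is bilinear. It also needs conditional independence and zero mean of the summands given $\tilde x$. I would apply $A^{-1}$ before averaging, so that everything lives in the primal norm, and only pass to the dual norm per summand. Everything else is direct substitution.
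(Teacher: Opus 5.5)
Your proposal is correct and is essentially the paper's proof: the same closed-form expression for $\underline{x} - x^\star$, the same split into $\nabla f(x^\star,z)$ plus the Assumption~\ref{assum:vr_assum} difference term, Young's inequality combined with the variance identity for averages of independent mean-zero vectors, and the bound $\|A^{-1}u\| \le \|u\|_*/\mu$. The only blemish is a sign slip in your first display. The defining equation gives $A(\underline{x} - x^\star) = \nabla F(\tilde x) - \widehat\nabla f(\tilde x)$, and your middle expression actually simplifies to $\widehat\nabla f(\tilde x)$. This is harmless, since only the norm of this vector enters the bound.
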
	
\begin{proof}
Recall that $\nabla F(x) = Ax +b = A(x-x^\star)$, so we have
\begin{align*}
\underline{x} -x^\star &= - A^{-1} (\widehat{\nabla} f(\tilde x) - \nabla F(\tilde x))\\
&= - \frac{1}{N_k}\sum_{i=1}^{N_k}A^{-1}(\nabla f(\tilde x, z_i ) - \nabla F(\tilde x) )\\
&= -\frac{1}{N_k}\sum_{i=1}^{N_k}A^{-1}(\nabla f(x^\star,z_i)) -    \frac{1}{N_k}\sum_{i=1}^{N_k} A^{-1}(\nabla f(\tilde x, z_i ) - \nabla F(\tilde x) - \nabla f(x^\star,z_i))
\end{align*}
As a result,
\begin{align*}
&\quad \EE[\|\underline{x} - x^\star\|^2\mid \tilde x]\\
&\le  \frac{2}{N_k} \EE[\|A^{-1}\nabla f(x^\star,z) \|^2] + \frac{2}{N_k}\EE\left[\left\| A^{-1}(\nabla f(\tilde x, z_1 ) - \nabla F(\tilde x) - \nabla f(x^\star,z_1))\right\|^2 \big| \;\tilde x\right]\\
&\le \frac{2}{N_k} \EE[\|A^{-1}\nabla f(x^\star,z) \|^2] + \frac{2 \zeta^2}{ N_k \mu^2} \|\tilde x  -x^\star\|^2,
\end{align*}
where the first inequality follows from Young's inequality and  the independence of samples, and the second inequality follows from Assumptions~\ref{assum:vr_assum} and Lemma~\ref{lem:general_lambda_min}.
\end{proof}
When $\|\cdot\|$ is the $\ell_2$ norm, $\frac{2}{N_k} \EE[\|A^{-1}\nabla f(x^\star,z) \|_2^2]  = \frac{2}{N_k} \trace{A^{-1}\cov(\nabla f(x^\star,z)) A^{-1}}$; the numerator captures the correct geometry (see the local minimax lower bound in Proposition~\ref{prop: mainlowerboundtheorem_quadratic_intro}). Therefore, if we select $N_k \ge \frac{4\zeta^2}{\mu^2}$, then ~\eqref{eqn:vr_quadratic_key} yields  
$$
\EE[\|\underline{x} - x^\star\|_2^2 \mid \tilde x] \le \frac{2}{N_k} \trace{A^{-1}\cov(\nabla f(x^\star,z)) A^{-1}}+ \frac{1}{2}\cdot \|\tilde x - x^\star\|_2^2.
$$  
Thus, the initial expected error is halved, up to an additive statistical error 
$$\frac{2}{N_k}\trace{A^{-1}\cov(\nabla f(x^\star,z)) A^{-1}}.$$

We now turn to the general setting. In this non-quadratic setting, the behavior of the Hessian plays an important role. Specifically, the Hessian $\nabla^2 F(x)$ remains close to $\nabla^2 F(x^\star)$ only within a neighborhood around $x^\star$, whose size is dictated by the Hessian Lipschitz constant $\liph$. Outside this region, the geometry of the problem can vary considerably. Since the optimization algorithm's behavior at each iterate typically depends on the Hessian at its current position, one cannot expect to achieve identical behavior to the quadratic case (which is determined solely by the Hessian at $x^\star$) if heavy-tailed noise constantly pushes iterates outside this neighborhood. Therefore, we assume that the gradient noise has sub-exponential tails according to the following definition:
\begin{definition}[Sub-exponential random vectors]
A random vector $X \in \RR^d$ sub-exponential with parameter $(\nu^2, \alpha)$ if for any unit vector $\| v\| = 1$, $\dotp{v, X - \EE[X]}$ is a sub-exponential random variable with parameters $(\nu^2,\alpha)$.
\end{definition}
\begin{assumption}\label{assum:light_tail_non_quadratic}
For $z \sim P$, the  noisy gradient  $\nabla f(x, z)$ is a sub-exponential vector with parameter $(\sigma_1^2 + \sigma_2^2 \|x - x^\star\|^2, \sigma_1 + \sigma_2 \| x- x^\star\|)$.
As a consequence, $\nabla f(x,z)$ has finite moments for any $x$. 
\end{assumption}
This particular assumption, while convenient for clarity, is not strictly necessary. Indeed, we can establish similar convergence guarantees under weaker conditions, such as merely requiring bounded fourth moments. Identifying the weakest possible noise assumptions remains an interesting direction for future work.
\begin{remark}
We use the same $\sigma_1$ and $\sigma_2$ in Assumption~\ref{assum:light_tail_non_quadratic} for both sub-exponential parameters primarily for notational simplicity, since this suffices for all our examples. Our analysis also extends to the more general setting where $\nabla f(x, z)$ is sub-exponential with parameters $(\sigma_1^2 + \sigma_2^2\|x - x^\star\|^2,\ \tilde{\sigma}_1 + \tilde{\sigma}_2 \|x - x^\star\|)$.
\end{remark}

Below, we present our epoch-wise bound for general non-quadratic functions, with the proof deferred to Appendix~\ref{appendix:vr_nonquadratic}. 
\begin{lemma}\label{lem:vr_key_non_qud}
Suppose that Assumptions~\ref{assum: non-quadratic}, ~\ref{assum:vr_assum}, and~\ref{assum:light_tail_non_quadratic} hold. We write $A = \nabla^2 F(x^\star)$. For any fixed epoch $k$, let $\underline{x}$ be defined as in equation~\eqref{eqn: underline_x_def} and $n$ be the number of samples used in step~\eqref{eqn:averging_meta}. Suppose that
$$n \ge  \max \left\{ \frac{1024 \liph^2  \sigma_1^2}{\lmin^2}, \frac{128\liph \sigma_1}{\lmin}\right\} \cdot \max\left\{ d, \log \left(\frac{4 \lmin^2}{\mu^2 \liph^2 \EE[\|A^{-1} \nabla f(x^\star,z)\|^2]}\right) \right\},$$
and 
$$
n \ge   2\ceil{\max \left\{ \frac{1024 \liph^2  \sigma_1^2}{\lmin^2}, \frac{128\liph \sigma_1}{\lmin}\right\}} \left( \log \left( \max \left\{ \frac{1024 \liph^2  \sigma_1^2}{\lmin^2}, \frac{128\liph \sigma_1}{\lmin}\right\}\right) +1\right).
$$
Then we have
\begin{align*}
	\EE[\|\underline{x} - x^\star\|^2 \mid \tilde x] \le \frac{9 \cdot \EE[\|A^{-1} \nabla f(x^\star,z)\|^2]}{n} + \frac{8(\zeta^2  + d\sigma_2^2)}{n \mu^2} \|\tilde x - x^\star\|^2.
\end{align*}
\end{lemma}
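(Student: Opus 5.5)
The plan is to linearize the optimality condition~\eqref{eqn: underline_x_def} around $x^\star$. I will control the nonlinear remainder through the instance-specific Hessian Lipschitz condition~\eqref{eqn:lip_hessian} on a high-probability event. On the complementary event I will fall back on strong convexity.

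\textbf{Step 1 (linearization).} Write $e := \widehat\nabla f(\tilde x) - \nabla F(\tilde x)$, so that~\eqref{eqn: underline_x_def} reads $\nabla F(\underline x) = -e$. Set $H := \int_0^1 \nabla^2 F(x^\star + t(\underline x - x^\star))\,dt$, so that $\nabla F(\underline x) = H(\underline x - x^\star)$. Then
\begin{align*}
(I + \Delta)(\underline x - x^\star) = -A^{-1} e, \qquad \Delta := A^{-1}(H - A),
\end{align*}
and~\eqref{eqn:lip_hessian} gives $\|\Delta\| \le \tfrac{\liph}{2}\|\underline x - x^\star\|$. Hence $\|A^{-1}\nabla F(x)\| \ge r(1 - \liph r/2)$ with $r = \|x - x^\star\|$.

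I will combine this with a continuity/monotonicity argument along rays, or a degree argument using that $\underline x$ is the unique root. The target conclusion is: on the event $\mathcal E := \{\|A^{-1}e\| \le 1/(4\liph)\}$ we have $\|\underline x - x^\star\| \le 1/\liph$, and therefore $\|\underline x - x^\star\| \le 2\|A^{-1}e\|$. Off $\mathcal E$, strong convexity gives $\mu\|\underline x - x^\star\| \le \|e\|_*$.

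\textbf{Step 2 (good event).} Split $e = e_1 + e_2$. Here $e_1 = \frac1n\sum_i \nabla f(x^\star, z_i)$, using $\nabla F(x^\star) = 0$. The term $e_2$ is the average of the centered differences $(\nabla f(\tilde x, z_i) - \nabla F(\tilde x)) - \nabla f(x^\star, z_i)$. By independence, Young's inequality, Assumption~\ref{assum:vr_assum}, and Lemma~\ref{lem:general_lambda_min} (exactly as in Lemma~\ref{lem:vr_quadratic_key}),
\begin{align*}
4\,\EE[\|A^{-1}e\|^2] \le \frac{8\,\EE[\|A^{-1}\nabla f(x^\star,z)\|^2]}{n} + \frac{8\zeta^2}{n\mu^2}\|\tilde x - x^\star\|^2 .
\end{align*}
This already accounts for most of the claimed bound.

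\textbf{Step 3 (bad event, the main obstacle).} I will bound
\begin{align*}
\EE\big[\|e\|_*^2 \mathbf 1_{\mathcal E^c}\big]/\mu^2 \le \sqrt{\EE\|e\|_*^4}\,\sqrt{\mathbb P(\mathcal E^c)}/\mu^2 .
\end{align*}
For the fourth moment, Assumption~\ref{assum:light_tail_non_quadratic} and a coordinate or $\varepsilon$-net argument show that $e$ is sub-exponential with parameters scaled by $1/\sqrt n$ and $1/n$. This yields $\sqrt{\EE\|e\|_*^4} \lesssim d(\sigma_1^2 + \sigma_2^2\|\tilde x - x^\star\|^2)/n$, and the $d\sigma_2^2$ piece gives the $\frac{8 d\sigma_2^2}{n\mu^2}\|\tilde x - x^\star\|^2$ term.

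For the probability, use $\|A^{-1}e\| \le \|e\|_*/\lmin$ together with a union bound over a $1/2$-net of the dual sphere (cardinality $5^d$) and Bernstein's inequality. Because the threshold $\lmin/(4\liph)$ is at least of order $\sigma_1$ times the constant in $n$'s first requirement, this gives
\begin{align*}
\mathbb P(\mathcal E^c) \le \exp\!\big(-c\, n \min\{\lmin^2/(\liph^2\sigma_1^2),\ \lmin/(\liph\sigma_1)\}\big).
\end{align*}
Some care is needed for the $\sigma_2\|\tilde x - x^\star\|$ part of the parameters; I will handle it by either absorbing it into the $d\sigma_2^2$ term or splitting the threshold.

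The two sample-size conditions are then used as follows. The factor $\max\{d, \log(4\lmin^2/(\mu^2\liph^2\EE\|A^{-1}\nabla f\|^2))\}$ makes $\sqrt{\mathbb P(\mathcal E^c)}$ beat both the $5^d$ net cardinality and the ratio $\lmin^2/(\mu^2\liph^2)$ relative to the leading variance. The second, $n \ge 2M(\log M + 1)$-type condition ensures that $n\,e^{-cn/M}$ is at most a constant, absorbing the extra factor of $n$.

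Collecting terms leaves at most one additional $\EE[\|A^{-1}\nabla f(x^\star,z)\|^2]/n$ beyond Step~2's $8$, giving the constant $9$, plus $\frac{8(\zeta^2 + d\sigma_2^2)}{n\mu^2}\|\tilde x - x^\star\|^2$. The hardest bookkeeping is exactly this tail step: matching the exponential tail against the $d$- and $\log$-dependent thresholds so that the constants come out as stated.
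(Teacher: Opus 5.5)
Your plan follows the paper's proof closely.

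\begin{itemize}
\item The split at $\norm{A^{-1}e}\le 1/(4\liph)$ is what Lemma~\ref{lem:small_noise_close_solution} provides. The paper splits on $\norm{\underline x-x^\star}\le 1/(2\liph)$ and uses that lemma to pass to this event.
\item Your good-event bound is the paper's $8V/n+8\zeta^2\norm{\tilde x-x^\star}^2/(n\mu^2)$, with $V:=\EE[\norm{A^{-1}\nabla f(x^\star,z)}^2]$.
\item On the bad event the paper likewise uses $\mu\norm{\underline x-x^\star}\le\norm{e}_*$ together with a net and sub-exponential tails.
\end{itemize}

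The gap is in Step~3, which is exactly where the stated constants are decided. Your Cauchy--Schwarz step halves the exponent. With the tools you name (a $1/2$-net of size $5^d$ and Bernstein), the bound then does not close under the stated thresholds.

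Put $M:=\max\{1024\liph^2\sigma_1^2/\lmin^2,\,128\liph\sigma_1/\lmin\}$ and $\ell:=\log(4\lmin^2/(\mu^2\liph^2V))$. The hypotheses then give $n/M\ge\max\{d,\ell,\log n\}$. When $\sigma_2\norm{\tilde x-x^\star}\le\sigma_1$, your net bound gives $\mathbb{P}(\norm{e}_*>\lmin/(4\liph))\lesssim 5^de^{-4n/M}$.

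The paper never takes a square root. It integrates the tail (Lemma~\ref{lem:truncted_second_moment_light_tail}) and keeps an exponent of at least $4n/M-2d\ge 2n/M\ge\ell+\log n$. That single exponent pays for both the prefactor $\lmin^2/(\mu^2\liph^2)$ and the factor $1/n$.

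After your square root, only $2n/M-\tfrac{\log 5}{2}d$ is left. Your prefactor $\sqrt{\EE\norm{e}_*^4}/\mu^2$, of order $d\sigma_1^2/(n\mu^2)$, does not compensate, because $d\sigma_1^2$ can be as large as $n\lmin^2/(1024\liph^2)$. For instance, take $d=\ell=s:=n/M$ with $\log M=s/2-1$, which saturates both sample-size conditions. You would then need $n\lesssim e^{0.2s}$, whereas in fact $n=se^{s/2-1}$. So the constant $9$ cannot be reached along your route without enlarging the thresholds.

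The other point you leave open (``absorb or split'') is handled in the paper by an explicit case split on $\norm{\tilde x-x^\star}$ versus $\sigma_1/\sigma_2$.

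\begin{itemize}
\item In the small case, the sub-exponential parameters of $e$ are at most $(2\sigma_1^2/n,\,2\sigma_1/n)$, so the tail estimate above does not depend on $\tilde x$.
\item In the large case, $\mathbb{P}(\mathcal{E}^c)$ need not be small at all, and the paper simply drops the indicator. It uses $\EE\norm{e}_*^2\le d(\sigma_1^2+\sigma_2^2\norm{\tilde x-x^\star}^2)/n\le 2d\sigma_2^2\norm{\tilde x-x^\star}^2/n$, from Lemma~\ref{lem:second_moment_sub_exponential} applied along an orthonormal basis for the inner product inducing $\norm{\cdot}$. This needs only second moments, so there is no fourth-moment constant to track against the coefficient $8$.
\end{itemize}

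The repair is local. Replace Cauchy--Schwarz by the identity $\EE[\norm{e}_*^2\mathbf{1}_{\norm{e}_*>c}]=c^2\,\mathbb{P}(\norm{e}_*>c)+\int_c^\infty 2t\,\mathbb{P}(\norm{e}_*>t)\,dt$ with $c=\lmin/(4\liph)$, and make the case split explicit. The two sample-size conditions then play exactly the roles described above. The bound $d\le n/M$ absorbs the net cardinality, and $\ell+\log n\le 2n/M$ yields $V/(2n)$ from each of the two tail terms, giving the constant $9$.
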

In a spirit similar to Lemma~\ref{lem:vr_quadratic_key}, the above result shows that if $n \ge \frac{16(\zeta^2 + d \sigma_2^2)}{\mu^2}$, then the initial error in the epoch is halved, up to an additive statistical error term $\frac{9 \cdot \EE[\|A^{-1} \nabla f(x^\star,z)\|^2]}{n}$.

\subsection{Stochastic optimization subroutines for the inner loop}\label{sec:stochastic_opt_subroutine}
In this section, we present two stochastic optimization algorithms that can serve as subroutines $\cA$ within Algorithm~\ref{alg:metaVR}. 
We begin with vanilla SGD.
\begin{algorithm}
\caption{SGD($\tilde x, g, T$)}
\label{alg:SGD}
\begin{algorithmic}[1]
	\State {\bfseries Input}   Initialization $\tilde x \in \RR^d$, sample objective $(x,z) \rightarrow g(x,z)$, $ T\ge 0$.
	\State Extra input: stepsize $\eta$,  weights $\{w_t\}_{t=0}^{T+1}$   %
	\State Set $x_0 = \tilde x$. 
	\For {$t = 0,\ldots, T$}
	\State Collect a new sample $z_t$ and compute
	\begin{equation}\label{eqn:VRupdate_innerloop}
		x_{t+1} = \argmin_{x \in \RR^d}\left\{ \eta  \dotp{\nabla g(x_t,z_t), x - x_t} +  \frac{1}{2}\|x_t - x\|^2\right\}.
	\end{equation}
	\EndFor
	\State \Return  $\widehat x = \frac{\sum_{t=0}^{T+1} w_t x_t}{\sum_{t=0}^{T+1} w_t}$
\end{algorithmic}
\end{algorithm}

For convenience, we present below a convergence guarantee for SGD---this is a slightly nonstandard (but still straightforward) result owing to state-dependent noise. Recall that $g(x,z)= f(x,z) - \langle \nabla f(\tilde x, z) - \widehat \nabla f(\tilde x), x \rangle$ and $G(x) = \EE_{z\sim P}[g(x,z)]$. Here $\tilde x$ and $\widehat \nabla f(\tilde x)$ are produced by other subroutines and are treated as random variables. Let $\cF_0 = \sigma\left(\tilde x, \widehat \nabla f(\tilde x) \right) $ and $\cF_t = \sigma\left(\tilde x, \widehat \nabla f(\tilde x), z_0, z_1,\ldots,z_{t-1}\right)$ denote the $\sigma$-algebra generated by all the random variables up to time $t$.
\begin{proposition}\label{prop:convergence_SGD}
Denote the minimizer of $G$ by $\underline{x}$ and suppose that Assumptions~\ref{assum: non-quadratic} and~\ref{assum:vr_assum} hold. Suppose that Algorithm~\ref{alg:SGD} is run with parameters $\eta \le \min \left\{\frac{1}{2L}, \frac{\mu }{256\zeta^2} \right\}$ and $T \ge \max\{\frac{128}{\eta \mu}, 64\}$. Setting  $w_0 = 0$ and $w_i = \frac{1}{T+1}$ for all $1\le i \le T+1$,  we have
\begin{align*}
	\EE[\|\widehat x - \underline{x}\|^2 \mid \cF_0] \le \frac{1}{16} \|\tilde x - \underline{x}\|^2.
\end{align*}
\end{proposition}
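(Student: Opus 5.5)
We will run the standard one-step Lyapunov analysis of SGD in the Hilbert norm $\|\cdot\|=\|\cdot\|_Q$, taking care with the state-dependent noise, and then average. Since $\|\cdot\|$ is induced by $Q$, the update~\eqref{eqn:VRupdate_innerloop} is $x_{t+1}=x_t-\eta Q^{-1}\nabla g(x_t,z_t)$. Expanding the square gives
\begin{align*}
\|x_{t+1}-\underline x\|^2=\|x_t-\underline x\|^2-2\eta\dotp{\nabla g(x_t,z_t),x_t-\underline x}+\eta^2\|\nabla g(x_t,z_t)\|_*^2 .
\end{align*}
We take expectation conditional on $\cF_t$. Note $\EE[\nabla g(x_t,z_t)\mid\cF_t]=\nabla G(x_t)$, and $\nabla G(\underline x)=0$.

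Next we decompose the noise. Write $\nabla g(x,z)-\nabla G(x)=e(x,z)-e(\tilde x,z)$ with $e(x,z):=\nabla f(x,z)-\nabla F(x)$. The linear correction in $g$ exactly cancels the noise at $\tilde x$; this is the point of the construction. Assumption~\ref{assum:vr_assum} then gives
\begin{align*}
\EE[\|\nabla g(x_t,z_t)-\nabla G(x_t)\|_*^2\mid\cF_t]\le\zeta^2\|x_t-\tilde x\|^2\le 2\zeta^2\|x_t-\underline x\|^2+2\zeta^2\|\tilde x-\underline x\|^2 .
\end{align*}
For the deterministic part, $G$ is $\mu$-strongly convex and $L$-smooth since it differs from $F$ by a linear term. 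Co-coercivity gives $\|\nabla G(x_t)\|_*^2\le L\dotp{\nabla G(x_t),x_t-\underline x}$, which combined with $\eta\le 1/(2L)$ absorbs half of the inner-product term.

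This yields the recursion
\begin{align*}
\EE\|x_{t+1}-\underline x\|^2\le\|x_t-\underline x\|^2-\eta\dotp{\nabla G(x_t),x_t-\underline x}+2\eta^2\zeta^2\|x_t-\underline x\|^2+2\eta^2\zeta^2\|\tilde x-\underline x\|^2 .
\end{align*}
Strong convexity gives $\dotp{\nabla G(x_t),x_t-\underline x}\ge\mu\|x_t-\underline x\|^2$. Since $\eta\le\mu/(256\zeta^2)$, we have $2\eta^2\zeta^2\le\eta\mu/128$, so the multiplicative-noise term is absorbed into half of the strong-convexity decrease. We obtain
\begin{align*}
\frac{\eta\mu}{2}\,\EE\|x_t-\underline x\|^2\le\EE\|x_t-\underline x\|^2-\EE\|x_{t+1}-\underline x\|^2+\frac{\eta\mu}{128}\|\tilde x-\underline x\|^2 .
\end{align*}
Summing over $t=0,\dots,T$ telescopes the first two terms to at most $\|x_0-\underline x\|^2=\|\tilde x-\underline x\|^2$. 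Dividing by $(T+1)\eta\mu/2$ gives
\begin{align*}
\frac{1}{T+1}\sum_{t=0}^{T}\EE\|x_t-\underline x\|^2\le\Big(\frac{2}{\eta\mu(T+1)}+\frac1{64}\Big)\|\tilde x-\underline x\|^2 .
\end{align*}

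Finally we convert to the stated weights. The weights are $w_0=0$ and $w_t=1/(T+1)$ on indices $1,\dots,T+1$, so the sum is shifted by one index: it drops $t=0$ and includes $t=T+1$. The term $\EE\|x_{T+1}-\underline x\|^2$ is controlled by the same recursion, which is a contraction and therefore gives $\EE\|x_{T+1}-\underline x\|^2\le(1+\eta\mu/64)\|\tilde x-\underline x\|^2$ at worst. Its contribution after dividing by $T+1$ is at most $2/(T+1)\le 1/32$, using $T\ge 64$. Applying Jensen (convexity of $\|\cdot\|^2$) to the averaged iterate and using $T\ge128/(\eta\mu)$, so that $2/(\eta\mu(T+1))\le 1/64$, the total is at most $1/64+1/64+1/32=1/16$.

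The main obstacle is the noise step. The noise variance is state-dependent and grows with $\|x_t-\tilde x\|$ rather than being bounded, so the split through $\underline x$ and the choice $\eta\lesssim\mu/\zeta^2$ must together keep the multiplicative term dominated by strong convexity. The remaining additive term, proportional to $\|\tilde x-\underline x\|^2$, must carry a small enough constant, $1/64$ here, for the final budget of $1/16$ to be met.
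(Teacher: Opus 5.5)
Your proof is correct, but it takes a different route from the paper's.

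\textbf{The paper's route.} The paper works with function values. It combines the descent lemma for $G$ with the three-point inequality for the prox step (Lemma 2 of Ghadimi--Lan) and with convexity of $G$. Noise enters only through the cross term $\langle \delta_t, \underline{x}-x_{t+1}\rangle$: the part $\langle\delta_t,\underline x - x_t\rangle$ has zero conditional mean, and the remainder is handled by Young's inequality. This gives $\frac{\mu}{2}\EE[\|x_{t+1}-\underline x\|^2\mid\cF_t]\le \EE[G(x_{t+1})-G(\underline x)\mid\cF_t]\le \frac{1}{2\eta}\bigl(\|x_t-\underline x\|^2-\EE[\|x_{t+1}-\underline x\|^2\mid\cF_t]\bigr)+\eta\zeta^2\|x_t-\tilde x\|^2$. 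Strong convexity is applied at the \emph{new} iterate $x_{t+1}$, so summing over $t=0,\dots,T$ directly controls $\sum_{t=1}^{T+1}$. This explains the choice $w_0=0$, $w_t=1/(T+1)$. The template needs neither co-coercivity nor the bias--variance identity for $\|\nabla g\|_*^2$. It therefore carries over to general prox/Bregman setups and gives function-value control along the way.

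\textbf{Your route.} You use the Hilbert structure directly:
\begin{itemize}
\item the explicit update $x_{t+1}=x_t-\eta Q^{-1}\nabla g(x_t,z_t)$ and an exact expansion of the square;
\item the orthogonal split $\EE\|\nabla g\|_*^2=\|\nabla G\|_*^2+\EE\|\nabla g-\nabla G\|_*^2$;
\item co-coercivity, to absorb $\eta^2\|\nabla G(x_t)\|_*^2$.
\end{itemize}
This is more elementary and self-contained, since no external lemma is needed, and it yields a genuine one-step contraction. The price is that strong convexity lands on $x_t$, so you get $\sum_{t=0}^{T}$ and need the index shift.

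\textbf{The index shift.} Your justification (``is a contraction and therefore gives $(1+\eta\mu/64)$ at worst'') should be made explicit. Set $e_t:=\EE[\|x_t-\underline x\|^2\mid\cF_0]$. Your recursion gives $e_{t+1}\le(1-\tfrac{127}{128}\eta\mu)e_t+\tfrac{\eta\mu}{128}e_0$, and the leading coefficient is nonnegative since $\eta\mu\le 1/2$. Induction then gives $e_t\le e_0$ for all $t$. Hence $\sum_{t=1}^{T+1}e_t\le\sum_{t=0}^{T}e_t$, so the shift costs nothing, and your $1/32$ allowance (and the use of $T\ge 64$) is not actually needed.
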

We defer the proof to Appendix~\ref{appendix:stochastic_opt_subroutine_SGD}. Note that SGD requires $T = \Omega(\frac{1}{\eta \mu}) = \Omega( \frac{L}{\mu} + \frac{\zeta^2}{\mu^2})$ new samples within each epoch, and the term $\frac{L}{\mu}$ is suboptimal when compared to the oracle complexity lower bound. To remedy this issue, we introduce an accelerated SGD (ASGD), which draws inspiration from and extends the AC-SA algorithm proposed by Ghadimi and Lan~\cite{ghadimi2012optimal}. 
\begin{algorithm}[H]
\caption{ASGD($\tilde x, g, T$)}
\label{alg:ASGD}
\begin{algorithmic}[1]
	\State {\bfseries Input}   Initialization $\tilde x \in \RR^d$, sample objective $(x,z) \rightarrow g(x,z)$, $ T\ge 0$.
	\State Extra input: stepsize $\{\alpha_t\}_{t\ge 1}$ and $\{\gamma_t\}_{t \ge 1}$ s.t. $\alpha_1=1, \alpha_t \in (0,1)$ for $t \ge 2$, and $\gamma_t > 0$ for any $t \ge 1$, sampling parameter $\{m_t\}_{t\ge 1}$, parameter $\tmu $.  %
	\State Set the initial point $y_0=x_0 = \tilde x$ and $t =1$.
	\For {$t = 1,\ldots, T$}
	\State Set
	\begin{align*}
		r_t = \frac{(1-\alpha_t)(\tmu + \gamma_t)}{\gamma_t + (1-\alpha_t^2)\tmu} y_{t-1} + \frac{\alpha_t [(1-\alpha_t) \tmu + \gamma_t]}{\gamma_t + (1-\alpha_t^2) \tmu} x_{t-1}; 
	\end{align*}
	\State Collect $m_t$ new i.i.d. samples $\{z_i^{(t)}\}_{i=1}^{m_t}$ and write $G_t = \frac{1}{m_t} \sum_{i=1}^{m_t} \nabla g(r_t, z_i^{(t)})$. Set 
	\begin{align*}
		x_{t} &= \argmin_{x \in \RR^d} \left \{\alpha_t [\dotp{G_t, x} + \frac{\mu}{2} \|r_t - x\|^2] + \frac{(1-\alpha_t) \tmu + \gamma_t}{2} \|x_{t-1} - x\|^2 \right\} 
	\end{align*} 
	\begin{align}
		y_{t} = \alpha_t x_t + (1-\alpha_t) y_{t-1}.
	\end{align}
	\EndFor
	\State \Return  $\widehat x = y_T.$
\end{algorithmic}
\end{algorithm}
Note that our algorithm is slightly different from AC-SA in~\cite{ghadimi2012optimal}: we allow the minibatch size $m_t$---used in building the stochastic gradient $G_t$---to be time-dependent. This modification is key to accommodating state-dependent noise and retaining the desired instance-dependent performance. Let $\tilde \cF_1 = \sigma\left(\tilde x, \widehat \nabla f(\tilde x) \right) $ and  \(\tilde \cF_t = \sigma\left(\tilde x, \widehat \nabla f(\tilde x) ,\bigcup_{s=1}^{t-1}\bigcup_{i=1}^{m_s}\{z_i^{(s)}\}\right)\) denote the \(\sigma\)-algebra generated by all random variables observed up to time \(t\), excluding \(\bigcup_{i=1}^{m_t}\{z_i^{(t)}\}\). We now state the convergence guarantee of ASGD and defer its proof to Appendix~\ref{appendix:stochastic_opt_subroutine_ASGD}.
\begin{proposition}\label{prop:convergence_ASGD}
Denote the minimizer of $G$ by $\underline{x}$ and suppose that Assumptions~\ref{assum: non-quadratic} and~\ref{assum:vr_assum} hold. Let $\{y_t\}_{t \ge 1}$ be computed by Algorithm~\ref{alg:ASGD} with parameters
\begin{align*}
	\alpha_t = \frac{2}{t+1}, \quad  \gamma_t = \frac{8L}{ t(t+1)}, \quad \tilde \mu = \frac{\mu}{2}, \quad \text{and}\quad m_t = \ceil{\frac{256\zeta^2  t}{\mu L}}.
\end{align*}
For any $T \ge 1 $, we have 
\begin{align*}
	\EE[G(y_T) - \inf G \mid \tilde \cF_1] \le \frac{4L \|\tilde x - \underline{x}\|^2}{T(T+1)} + \frac{ \mu \|\tilde x -\underline{x}\|^2}{64}.
\end{align*}
In particular, for any $T \ge 16 \sqrt{\frac{L}{ \mu}}$, we have 
\begin{align*}
	\EE[\|\widehat x - \underline{x}\|^2 \mid \tilde \cF_1] \le \frac{1}{16} \|\tilde x - \underline{x}\|^2.
\end{align*}
\end{proposition}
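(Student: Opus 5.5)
We adapt the AC-SA analysis of Ghadimi and Lan to state-dependent noise, with the minibatch sizes $m_t$ absorbing the noise. Everything is conditional on $\tilde\cF_1$, so $\tilde x$ and $\widehat\nabla f(\tilde x)$ are fixed. The function $G$ is $\mu$-strongly convex and $L$-smooth, and $\nabla G$ differs from $\nabla F$ only by a constant vector. Write $\delta_t = G_t - \nabla G(r_t)$. The additive correction in $g$ cancels in the difference, so
\[
\nabla g(x,z)-\nabla G(x) = \bigl(\nabla f(x,z)-\nabla F(x)\bigr)-\bigl(\nabla f(\tilde x,z)-\nabla F(\tilde x)\bigr).
\]
Assumption~\ref{assum:vr_assum} and independence of the minibatch then give
\[
\EE[\|\delta_t\|_*^2\mid\tilde\cF_t]\le \frac{\zeta^2}{m_t}\|r_t-\tilde x\|^2 \le \frac{2\zeta^2}{m_t}\bigl(\|r_t-\underline x\|^2+\|\tilde x-\underline x\|^2\bigr).
\]
This is the only point where the noise enters, and it is state dependent through $\|r_t-\underline x\|$.

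First, I would prove the standard one-step AC-SA recursion with $\tilde\mu=\mu/2$. Combining smoothness at $r_t$, strong convexity of $G$, and the three-point property of the prox step defining $x_t$ gives, with $\Gamma_t=\prod_{s=2}^t(1-\alpha_s)$ (here $\Gamma_t = 2/(t(t+1))$ and $\gamma_t/\Gamma_t=4L$ is constant),
\[
\frac{G(y_t)-G^\star}{\Gamma_t}+\frac{\alpha_t\tilde\mu/2+\gamma_t/2}{\Gamma_t}\|x_t-\underline x\|^2 \le \frac{G(y_{t-1})-G^\star}{\Gamma_{t-1}}(1-\alpha_1\mathbf 1_{t=1})+\frac{\gamma_t}{2\Gamma_t}\|x_{t-1}-\underline x\|^2 + \frac{\alpha_t}{\Gamma_t}\Bigl(\dotp{\delta_t,\underline x - x_{t-1}}+\frac{\alpha_t\|\delta_t\|_*^2}{\alpha_t\tilde\mu+\gamma_t-L\alpha_t^2}\Bigr).
\]
The choice $\gamma_t = 8L/(t(t+1))\ge 2L\alpha_t^2$ keeps the denominator of order $\gamma_t$. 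The cross term has zero conditional mean because $x_{t-1}$ is $\tilde\cF_t$-measurable. The surplus $\tilde\mu$ is reserved to absorb the state-dependent variance: the other half of $\mu$ appears in the prox term. Telescoping yields
\[
\EE[G(y_T)-G^\star] \le \Gamma_T\cdot 2L\|\tilde x-\underline x\|^2 + \Gamma_T\sum_t \frac{\alpha_t^2}{\Gamma_t\gamma_t}\,\EE\|\delta_t\|_*^2 .
\]
Here $\Gamma_T\cdot 2L = 4L/(T(T+1))$, which is the first claimed term.

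The main obstacle is bounding the noise sum, since $\|r_t-\underline x\|$ is not yet controlled. We have $\alpha_t^2/(\Gamma_t\gamma_t) = \alpha_t^2/(4L\Gamma_t^2)=t^2/(4L)$ up to constants. With $m_t\ge 256\zeta^2 t/(\mu L)$, the factor $\frac{t^2}{4L}\cdot\frac{2\zeta^2}{m_t}$ is at most $\frac{\mu t}{512}$. Multiplied by $\Gamma_T\asymp 1/T^2$, the $\|\tilde x-\underline x\|^2$ part sums to at most $\frac{\mu}{512}\cdot\frac{2}{T(T+1)}\sum_{t\le T}t\le \mu\|\tilde x-\underline x\|^2/256$, well within $\mu/64$. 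For the $\|r_t-\underline x\|^2$ part, $r_t$ is a convex combination of $y_{t-1}$ and $x_{t-1}$, so $\|r_t-\underline x\|^2\le \max(\|y_{t-1}-\underline x\|^2,\|x_{t-1}-\underline x\|^2)$. I would prove jointly by induction that the potential $\Phi_t$ (the left side of the recursion) satisfies $\EE\Phi_t\le C\,(4L+\mu t^2)\|\tilde x-\underline x\|^2$. This controls $\EE\|x_t-\underline x\|^2$ through the $\tilde\mu$-weighted distance term and $\EE\|y_t-\underline x\|^2\le (2/\mu)\EE[G(y_t)-G^\star]$. The constant $256$ should make the feedback coefficient small, around $1/4$, closing the induction with the stated $\mu/64$ slack. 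If this direct route is too loose, I would instead keep the $\|x_{t-1}-\underline x\|^2$ noise contribution on the left side using the reserved $\tilde\mu$ weight before telescoping.

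Finally, strong convexity gives $\|y_T-\underline x\|^2\le \frac{2}{\mu}(G(y_T)-G^\star)$. This is at most $\bigl(\frac{8L}{\mu T^2}+\frac1{32}\bigr)\|\tilde x-\underline x\|^2$, and for $T\ge 16\sqrt{L/\mu}$ the first term is at most $1/32$, so the total is at most $1/16$.
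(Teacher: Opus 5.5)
The gap is in the one nonstandard step: the state-dependent part of the variance. You say the surplus $\mu-\tilde\mu$ is reserved for it, but that surplus never enters your recursion or your telescoped bound. Everything therefore rests on the induction, and the induction as you specify it does not close.

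Control of $y_t$ is where it fails. Bounding $y_t$ through $\|y_t-\underline x\|^2\le\frac{2}{\mu}(G(y_t)-G^\star)$ gives, under your hypothesis, only $\EE\|y_t-\underline x\|^2\le\frac{4C(4L+\mu t^2)}{\mu t(t+1)}\|\tilde x-\underline x\|^2$. This is of order $CL/(\mu t^2)$ for $t\lesssim\sqrt{L/\mu}$, and already about $8CL/\mu$ at $t=1$. Since $r_{t+1}$ puts weight at least $1-\alpha_{t+1}$ on $y_t$, your route bounds the weighted noise term $\frac{\mu(t+1)}{512}\EE\|r_{t+1}-\underline x\|^2$ only by something of order $CL/t$ in that range. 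These sum to order $CL\log(L/\mu)$, which $4CL$ cannot absorb uniformly in the condition number. Even when it can, the coefficient of $\frac{L}{T(T+1)}$ is inflated, so the stated constants do not come out.

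Separately, your one-step recursion does not match Algorithm~\ref{alg:ASGD}. The prox weight on $\|x_{t-1}-x\|^2$ is $(1-\alpha_t)\tilde\mu+\gamma_t$. The three-point inequality therefore gives $\frac{\tilde\mu+\gamma_t}{2\Gamma_t}\|x_t-\underline x\|^2$ on the left and $\frac{(1-\alpha_t)\tilde\mu+\gamma_t}{2\Gamma_t}\|x_{t-1}-\underline x\|^2$ on the right, and these telescope exactly. The cross term is centered at the point $x_{t-1}^+$ with $y_t-r_t=\alpha_t(x_t-x_{t-1}^+)$. With your left coefficient $2L+\frac{\mu t}{4}$, the potential would not even bound $\EE\|x_t-\underline x\|^2$ uniformly in $t$.

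The missing idea, which is what the paper does, is to spend the surplus strong convexity exactly where the noise is evaluated. In the Ghadimi--Lan bound the lower model satisfies $l_G(r_t,x)=G(r_t)+\dotp{\nabla G(r_t),x-r_t}+\frac{\tilde\mu}{2}\|r_t-x\|^2\le G(x)-\frac{\mu}{4}\|r_t-x\|^2$. So step $t$ carries a term $-\frac{\alpha_t\mu}{4}\|r_t-\underline x\|^2$. After splitting $\|r_t-\tilde x\|^2\le 2\|r_t-\underline x\|^2+2\|\tilde x-\underline x\|^2$, the state-dependent variance is at most $\frac{4\alpha_t^2}{\gamma_t}\frac{\zeta^2}{m_t}\|r_t-\underline x\|^2\le\frac{\mu}{128(t+1)}\|r_t-\underline x\|^2$. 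This is dominated by $\frac{\alpha_t\mu}{4}=\frac{\mu}{2(t+1)}$, which is exactly what $m_t\propto t$ is tuned for. The remainder $\frac{\mu}{128(t+1)}\|\tilde x-\underline x\|^2$ sums, after weighting by $\Gamma_T/\Gamma_t$, to $\frac{\mu}{256}\|\tilde x-\underline x\|^2$. The argument is termwise and needs no induction.

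Alternatively, your induction can be repaired even without the surplus. Use the correct coefficients, and bound $y_{t-1}$, hence $r_t$, as a deterministic convex combination of $x_0,\dots,x_{t-1}$, as you already do for $r_t$, rather than through strong convexity. The hypothesis $\EE\|x_s-\underline x\|^2\le\|\tilde x-\underline x\|^2$ for $s<t$ then gives $\EE\Phi_t\le\bigl(2L+\frac{\mu t(t+1)}{256}\bigr)\|\tilde x-\underline x\|^2$. Hence $\EE\|x_t-\underline x\|^2\le\frac{2L+\mu t(t+1)/256}{2L+\mu t(t+1)/8}\|\tilde x-\underline x\|^2\le\|\tilde x-\underline x\|^2$. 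This closes the induction and yields the first claim with $\frac{\mu}{128}$ in place of $\frac{\mu}{64}$.

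Your final step, from the function gap to the factor $\frac{1}{16}$ via strong convexity and $T\ge 16\sqrt{L/\mu}$, is correct and matches the paper.
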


Unlike SGD, ASGD requires $\sum_{t=1}^{T} m_t = O\left(\sqrt{\frac{L}{\mu}} + \frac{\zeta^2}{\mu^2}\right)$ samples when $T = O\left(\sqrt{\frac{L}{\mu}}\right)$, which matches the optimal oracle complexity. 

\section{Instance-dependent guarantees for overall algorithm} \label{sec:overall_complexity}
In this section, we establish convergence guarantees for Algorithm~\ref{alg:metaVR}. We first prove a general convergence theorem for Algorithm~\ref{alg:metaVR} under abstract assumptions on the inner loop. We then specialize this result to concrete settings and specific subroutines, yielding explicit, instance-optimal non-asymptotic guarantees.
\begin{proposition}\label{prop:meta_convergence}
Suppose that there exist scalars $C_1, C_2 \ge 0$ such that  for any $1\le k \le K$, when $N_k \ge C_1$, the output of each epoch of Algorithm~\ref{alg:metaVR}  satisfies 
\begin{align}\label{eqn:half_contraction_exp}
	\EE[\|\widehat x_k - x^\star\|^2] \le \frac{1}{2} \EE[\|\widehat x_{k-1} - x^\star\|^2] + \frac{C_2}{N_k}.
\end{align}
For any $N \ge 1$, set $N_k \ge \max \left\{ C_1, \left(\frac{3}{4} \right)^{K+1 -k} \cdot N \right\}$. Then  we have 
\begin{align*}
	\EE[\|\widehat x_K - x^\star\|^2] \le \frac{1}{2^K} \|\widehat x_0 - x^\star\|^2 + \frac{4C_2 }{N}.
\end{align*}
\end{proposition}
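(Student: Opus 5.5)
The plan is to unroll the one-step recursion~\eqref{eqn:half_contraction_exp} across all $K$ epochs and then bound the resulting weighted sum of statistical errors by a geometric series. Write $e_k := \EE[\|\widehat x_k - x^\star\|^2]$. Since each $N_k \ge C_1$ by construction, the hypothesis applies at every epoch and gives $e_k \le \frac{1}{2} e_{k-1} + \frac{C_2}{N_k}$ for all $1 \le k \le K$. A straightforward induction on $K$ then yields
\begin{align*}
e_K \le \frac{1}{2^K} e_0 + C_2 \sum_{k=1}^{K} \frac{1}{2^{K-k}} \cdot \frac{1}{N_k},
\end{align*}
where $e_0 = \|\widehat x_0 - x^\star\|^2$ because $\widehat x_0$ is deterministic. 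This disposes of the first term of the claimed bound.

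For the second term, I use only the lower bound $N_k \ge \left(\frac{3}{4}\right)^{K+1-k} N$ and discard the $C_1$ part of the maximum. This gives $\frac{1}{N_k} \le \frac{1}{N}\left(\frac{4}{3}\right)^{K+1-k}$. Setting $j = K-k \in \{0,\dots,K-1\}$, the sum is bounded by
\begin{align*}
\frac{C_2}{N} \sum_{j=0}^{K-1} \left(\frac{1}{2}\right)^{j} \left(\frac{4}{3}\right)^{j+1} = \frac{4C_2}{3N} \sum_{j=0}^{K-1} \left(\frac{2}{3}\right)^{j} \le \frac{4C_2}{3N} \cdot 3 = \frac{4C_2}{N}.
\end{align*}
Combining this with the unrolled recursion gives exactly the stated bound.

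There is no real obstacle here. The only points needing care are the indexing of the exponent $K+1-k$, so that the geometric ratio comes out as $(1/2)(4/3) = 2/3 < 1$ and the constant is $\frac{4}{3}\cdot 3 = 4$, and noting that $N_k$ may be non-integer before rounding up, which only helps since larger $N_k$ decreases $C_2/N_k$.
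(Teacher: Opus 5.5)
Your proposal is correct and follows essentially the same route as the paper: unroll the recursion to get $\frac{1}{2^K}\|\widehat x_0 - x^\star\|^2 + \sum_{k=1}^K \frac{C_2}{2^{K-k}N_k}$, then use $N_k \ge (3/4)^{K+1-k}N$ to reduce the sum to the geometric series $\frac{4C_2}{3N}\sum_{k}(2/3)^{K-k} \le \frac{4C_2}{N}$. Your bookkeeping of the exponent and the constant matches the paper's computation exactly.
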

\begin{proof}
Applying the bound~\eqref{eqn:half_contraction_exp} recursively, we have
\begin{align*}
	\EE[\|\widehat x_K - x^\star\|^2] &\le \frac{1}{2^K}\|\widehat x_0 -x^\star\|^2 +  \sum_{k=1}^{K} \frac{C_2}{2^{K-k} N_k}\\
	&\le \frac{1}{2^K}\|\widehat x_0 -x^\star\|^2 + \sum_{k=1}^{K} \left(\frac{2}{3}\right)^{K-k}\frac{4C_2}{3 N} \\
	&\le   \frac{1}{2^K}\|\widehat x_0 -x^\star\|^2 + \frac{4C_2}{N}.
\end{align*}
\end{proof}
Next, we present the main convergence results for quadratic problems and general non-quadratic problems. In the first case, we present one result for when the algorithm $\cA$ is SGD and another result for when the algorithm $\cA$ is ASGD. For non-quadratic problems, we only present the result for $\cA$ given by ASGD.

\subsection{Quadratic objectives and least squares}\label{sec:quadratic_objective}
Consider the case where the population objective is a quadratic function, i.e., $\liph =0$. Below is the convergence result when SGD is used as the subroutine $\cA$. 
\begin{theorem}\label{thm:vr_quadratic}
Suppose that  Assumption~\ref{assum: non-quadratic} holds with parameter $L \ge \mu >0$ and $\liph = 0$, and that Assumption~\ref{assum:vr_assum} holds. Write $A = \nabla^2 F(x^\star)$. Let $N$ be a positive integer. Assume that we use Algorithm~\ref{alg:SGD} (SGD) as a subroutine for Algorithm~\ref{alg:metaVR} and its parameters satisfy
\begin{align*}
	\eta \le \min \left\{\frac{1}{2L}, \frac{\mu }{256\zeta^2}\right\}, \quad T \ge \max\left\{\frac{128}{\eta \mu}, 64\right\} \quad \text{and} \quad N_k \ge \max \left\{\frac{32\zeta^2}{\mu^2}, \left(\frac{3}{4}\right)^{K+1-k} \cdot N \right\}.
\end{align*}
Setting  $w_0 = 0$ and $w_i = \frac{1}{T+1}$ for all $1\le i \le T+1$, the output of $\algname$ satisfies
\begin{align*}
	\EE[\|\widehat x_K - x^\star\|^2] \le \frac{1}{2^K} \|\widehat x_0 - x^\star\|^2 + \frac{20\cdot \EE[\|A^{-1} \nabla f(x^\star, z)\|^2]}{N} .
\end{align*}

\end{theorem}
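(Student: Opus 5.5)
The plan is to verify the hypothesis \eqref{eqn:half_contraction_exp} of Proposition~\ref{prop:meta_convergence} with $C_1 = \frac{32\zeta^2}{\mu^2}$ and $C_2 = 5\,\EE[\|A^{-1}\nabla f(x^\star,z)\|^2]$. Once this is done, the theorem follows immediately, since $4C_2/N = 20\,\EE[\|A^{-1}\nabla f(x^\star,z)\|^2]/N$.

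Fix an epoch $k$ and write $\tilde x = \widehat x_{k-1}$. Let $\underline x$ be the epoch-wise population solution from~\eqref{eqn: underline_x_def}. I would bound the error by splitting it as
\begin{align*}
\|\widehat x_k - x^\star\|^2 \le (1+\tfrac{1}{3})\|\widehat x_k - \underline x\|^2 + (1+3)\|\underline x - x^\star\|^2 .
\end{align*}
Any Young split $(1+\epsilon,\,1+1/\epsilon)$ would do; I will tune $\epsilon$ at the end.

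For the first term, apply Proposition~\ref{prop:convergence_SGD} conditionally on $\cF_0$. Its hypotheses on $\eta$, $T$, and the weights are exactly those of the theorem. This gives
\begin{align*}
\EE[\|\widehat x_k - \underline x\|^2\mid\cF_0] \le \tfrac{1}{16}\|\tilde x-\underline x\|^2 \le \tfrac{1}{8}\|\tilde x - x^\star\|^2 + \tfrac{1}{8}\|\underline x - x^\star\|^2 .
\end{align*}
Note that $\underline x$ is $\cF_0$-measurable, because it is determined by $\tilde x$ and $\widehat\nabla f(\tilde x)$.

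For $\|\underline x - x^\star\|^2$, I would invoke Lemma~\ref{lem:vr_quadratic_key}. One subtlety is that the lemma assumes $\mu=\lmin$, whereas here we only have $\lmin\ge\mu$. Its proof, however, only uses $\|A^{-1}v\|\le \|v\|_*/\lmin \le \|v\|_*/\mu$ via Lemma~\ref{lem:general_lambda_min}, so the same bound holds. With $N_k\ge 32\zeta^2/\mu^2$ this yields
\begin{align*}
\EE[\|\underline x-x^\star\|^2\mid\tilde x] \le \frac{2\,\EE[\|A^{-1}\nabla f(x^\star,z)\|^2]}{N_k} + \frac{1}{16}\|\tilde x - x^\star\|^2 .
\end{align*}

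Combining the pieces gives $\EE\|\widehat x_k-x^\star\|^2 \le a\,\EE\|\tilde x-x^\star\|^2 + b\,\EE[\|A^{-1}\nabla f\|^2]/N_k$. The main obstacle is only bookkeeping: choosing the Young parameter so that $a\le 1/2$ and $b\le 5$.

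With the split $\|\widehat x_k - x^\star\|^2\le(1+\epsilon)\|\widehat x_k-\underline x\|^2+(1+1/\epsilon)\|\underline x-x^\star\|^2$, the coefficients are
\begin{align*}
a = \tfrac{1+\epsilon}{8} + \bigl(\tfrac{1+\epsilon}{8}+1+\tfrac{1}{\epsilon}\bigr)\tfrac{1}{16}, \qquad b = 2\bigl(\tfrac{1+\epsilon}{8}+1+\tfrac{1}{\epsilon}\bigr).
\end{align*}
Taking $\epsilon=1$ gives $a = \tfrac14 + \tfrac{2.25}{16} < \tfrac12$ and $b = 4.5 \le 5$, which suffices. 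If the constants had come out tighter, I would instead use the sharper contraction $\tfrac{1}{16}\|\tilde x-\underline x\|^2$ without splitting it.

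Finally, take full expectations via the tower property and apply Proposition~\ref{prop:meta_convergence} with these $C_1$ and $C_2$ to conclude.
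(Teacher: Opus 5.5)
Your proposal is correct and is essentially the paper's proof. It uses the same split through $\underline x$ with the factor-$2$ Young inequality ($\epsilon=1$), applies Proposition~\ref{prop:convergence_SGD} conditionally on $\cF_0$, and applies Lemma~\ref{lem:vr_quadratic_key} with $N_k \ge 32\zeta^2/\mu^2$. This gives the per-epoch recursion with contraction $25/64 \le 1/2$ and additive constant $9/2 \le 5$, and Proposition~\ref{prop:meta_convergence} with $C_2 = 5\,\EE[\|A^{-1}\nabla f(x^\star,z)\|^2]$ then yields the theorem. Your remark that Lemma~\ref{lem:vr_quadratic_key} only needs $\lmin \ge \mu$ (via Lemma~\ref{lem:general_lambda_min}) rather than $\lmin = \mu$ is also correct, and it resolves a small mismatch between the lemma's hypotheses and the theorem's.
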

\begin{proof}
In each epoch $k$ of Algorithm~\ref{alg:metaVR}, we denote the minimizer of $G(x) = F(x) - \dotp{\nabla F(\tilde x) - \widehat \nabla f(\tilde x), x}$ by $\underline{x}_k$. 
By Proposition~\ref{prop:convergence_SGD} and Young's inequality, we have 
\begin{align*}
	\EE[\|\widehat x_k - \underline{x}_k\|^2] &\le \frac{1}{16} \EE[\|\widehat x_{k-1} - \underline{x}_k\|^2] \\
	&\le \frac{1}{8} \EE[\|\widehat x_{k-1} - x^\star\|^2] + \frac{1}{8} \EE[\| \underline{x}_k - x^\star\|^2]
\end{align*}
On the other hand, by Lemma~\ref{lem:vr_quadratic_key} and our choice of $N_k$,
\begin{align*}
	\EE[\|\underline{x}_k - x^\star\|^2] \le \frac{2 \cdot \EE[\|A^{-1} \nabla f(x^\star, z)\|^2]}{N_k} + \frac{1}{16} \EE[\|\widehat x_{k-1} - x^\star\|^2].
\end{align*}
Therefore, by Young's inequality, we have
\begin{align*}
	\EE[\|\widehat x_k - x^\star\|^2] &\le 2 \EE[\|\widehat x_k - \underline{x}_k\|^2] + 2 \EE[ \|\underline{x}_k - x^\star\|^2] \\
	&\le \frac{3}{8} \EE[\|\widehat x_{k-1} - x^\star\|^2] + \frac{1}{4}\EE[\|\underline{x}_k - x^\star\|^2 ] + \frac{4 \cdot \EE[\|A^{-1} \nabla f(x^\star, z)\|^2]}{N_k}\\
	&\le \frac{1}{2} \EE[\|\widehat x_{k-1} - x^\star\|^2] + \frac{5 \cdot \EE[\|A^{-1} \nabla f(x^\star, z)\|^2]}{N_k} .
\end{align*}
The desired inequality then follows from Proposition~\ref{prop:meta_convergence}.  
\end{proof}
Below is a direct application of Theorem~\ref{thm:vr_quadratic}. We defer its proof to Appendix~\ref{appendix:quadratic_complexity}.
\begin{corollary}\label{cor:quadratic_complexity}
Suppose that  Assumption~\ref{assum: non-quadratic} holds with parameter $L \ge \mu >0$ and $\liph = 0$, and that Assumption~\ref{assum:vr_assum}  holds.  Write $A = \nabla^2 F(x^\star)$. Let $n$ denote the total number of samples. Suppose that $$n \gtrsim_{\log} \frac{L}{\mu} + \frac{\zeta^2}{\mu^2} + 1.$$ 	There exists a choice of parameters $(\eta, T, K, N_k)$ (made explicit in the proof) of Algorithm~\ref{alg:metaVR} with Algorithm~\ref{alg:SGD} (SGD) such that the total number of samples used satisfies 
$TK + \sum_{k=1}^{K} N_k \le n,$
and the output of $\algname$ satisfies
\begin{align*}
	\EE[\|\widehat x_K - x^\star\|^2] \le \frac{121 \cdot  \EE[\|A^{-1} \nabla f(x^\star, z)\|^2]}{n}.
\end{align*}
In particular, when $\|\cdot\|$ is the $\ell_2$ norm, we have 
\begin{align}\label{eqn:upper_bound_quadratic_SGD}
	\EE[\|\widehat x_K - x^\star\|_2^2] \le   \frac{121 \cdot\trace{A^{-1}\Sigma A^{-1}}}{n} .
\end{align}
\end{corollary}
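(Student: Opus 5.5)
The plan is to instantiate Theorem~\ref{thm:vr_quadratic} with explicit parameters and count samples. Set $\eta = \min\{\frac{1}{2L}, \frac{\mu}{256\zeta^2}\}$ and $T = \ceil{\max\{\frac{128}{\eta\mu}, 64\}}$. Then
\[
T \lesssim \frac{L}{\mu} + \frac{\zeta^2}{\mu^2} + 1 ,
\]
and the per-epoch inner-loop cost is $T+1$ samples; we absorb the extra sample into $T$ by adjusting constants. Write $C_1 := \frac{32\zeta^2}{\mu^2}$ for the minimal batch size. With $N$ to be chosen, set
\[
N_k = \ceil{\max\{C_1, (3/4)^{K+1-k} N\}}.
\]
Theorem~\ref{thm:vr_quadratic} then gives
\[
\EE\|\widehat x_K - x^\star\|^2 \le 2^{-K}\|\widehat x_0 - x^\star\|^2 + \frac{20\, \EE\|A^{-1}\nabla f(x^\star,z)\|^2}{N}.
\]

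The sample budget satisfies $\sum_k N_k \le K(C_1+1) + N\sum_{j\ge 1}(3/4)^j \le K(C_1+1) + 3N$, so the total is at most $K(T + C_1 + 1) + 3N$. Taking $N = \floor{n/6}$, it suffices that $K(T+C_1+1) \le n/2$. This holds with $K \asymp \log(en)$ as soon as $n \gtrsim_{\log} \frac{L}{\mu} + \frac{\zeta^2}{\mu^2} + 1$, which is exactly the hypothesis. The statistical term is then at most $\frac{20\cdot 6\cdot(1+o(1))}{n}\EE\|A^{-1}\nabla f\|^2$, roughly $120/n$, consistent with the stated constant $121$ after handling floors.

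The main obstacle is the bias term $2^{-K}\|\widehat x_0 - x^\star\|^2$. The corollary claims a bound without it, so $K$ must be large enough that this term is at most $\frac{1}{n}\EE\|A^{-1}\nabla f(x^\star,z)\|^2$. This requires
\[
K \ge \log_2\!\left(\frac{n\,\|\widehat x_0 - x^\star\|^2}{\EE\|A^{-1}\nabla f(x^\star,z)\|^2}\right).
\]
This choice is problem dependent and logarithmic. I would make it explicit in the proof by choosing
\[
K = \ceil{\log_2\!\left(\frac{n\,\|\widehat x_0-x^\star\|^2}{\EE\|A^{-1}\nabla f(x^\star,z)\|^2}\right)} \vee 1,
\]
and letting the $\lesssim_{\log}$ in the sample-size requirement absorb this logarithm. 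That is, the hidden logarithmic factor is understood to include $\log$ of the initial-distance-to-noise ratio, which is treated as a parameter. The constant is then $120$ from the statistical term plus $1$ from the bias, giving $121$.

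Finally, the $\ell_2$ specialization is immediate. Since $\EE\nabla f(x^\star,z) = \nabla F(x^\star) = 0$, we have $\EE\|A^{-1}\nabla f(x^\star,z)\|_2^2 = \trace{A^{-1}\Sigma A^{-1}}$, which yields \eqref{eqn:upper_bound_quadratic_SGD}.
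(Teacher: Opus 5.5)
Your proposal is correct and matches the paper's proof essentially verbatim. Both plug into Theorem~\ref{thm:vr_quadratic} the same choices $\eta=\min\{\frac{1}{2L},\frac{\mu}{256\zeta^2}\}$, $N=n/6$, $N_k=\max\{\frac{32\zeta^2}{\mu^2},(\frac{3}{4})^{K+1-k}N\}$ and $K=\log_2\bigl(n\|\widehat x_0-x^\star\|^2/\EE[\|A^{-1}\nabla f(x^\star,z)\|^2]\bigr)$, split the budget as $3N=n/2$ plus $K$ epochs of cost $O(\frac{L}{\mu}+\frac{\zeta^2}{\mu^2}+1)$, and get $121=120+1$ (the paper's $T=\max\{\frac{256}{\eta\mu},64\}$ differs only inessentially). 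You also flag two points the paper leaves implicit, since it explicitly ignores rounding: the $\lesssim_{\log}$ must absorb the logarithm of the initial-distance-to-noise ratio, and integer rounding perturbs the constant $121$ slightly.
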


We make a few comments to interpret the results when $\|\cdot\|$ is the $\ell_2$ norm. First, by Proposition~\ref{prop: mainlowerboundtheorem_quadratic_intro}, the upper bound~\eqref{eqn:upper_bound_quadratic_SGD} matches the non-asymptotic instance-dependent lower bound up to universal constant, so our algorithm is optimal when the sample size $n$ exceeds $n_0 = \tilde O\left(\frac{L}{\mu} + \frac{\zeta^2}{\mu^2}\right)$.  Second,  the threshold $n_0$ consists of two parts: a deterministic contribution $\tilde O\paren{\frac{L}{\mu}}$  and a stochastic contribution $\tilde O\left(\frac{\zeta^2}{\mu^2}\right)$. The deterministic contribution characterizes the complexity of the algorithm when there is no stochastic noise in the problem. This oracle complexity matches the standard gradient descent algorithm to solve smooth and strongly convex optimization problems~\cite{nesterov2018lectures}. The stochastic part $\tilde O(\frac{\zeta^2}{\mu^2})$ is due to the noise in the stochastic observation model, and it goes to zero as the noise level diminishes to zero.  This is a necessary threshold, as pointed out in Section~\ref{sec:suboptimality}. 

Next, we improve the deterministic contribution to~$\tilde O\left(\sqrt{\frac{L}{\mu}} \right)$ by switching the subroutine from Algorithm~\ref{alg:SGD} to Algorithm~\ref{alg:ASGD}. The proof of this result follows verbatim the proof of Theorem~\ref{thm:vr_quadratic}, except that we apply Proposition~\ref{prop:convergence_ASGD} instead of Proposition~\ref{prop:convergence_SGD}. Consequently, we omit the proof for brevity.
\begin{theorem}\label{thm:vr_quadratic_ac}
Suppose that  Assumption~\ref{assum: non-quadratic} holds with parameter $L \ge \mu >0$ and $\liph = 0$, and that Assumption~\ref{assum:vr_assum} holds. Write $A = \nabla^2 F(x^\star)$. Let $N$ be a positive integer. Assume that we use Algorithm~\ref{alg:ASGD} (ASGD) as a subroutine for Algorithm~\ref{alg:metaVR} and its parameters satisfy
\begin{align*}
	\alpha_t = \frac{2}{t+1}, \quad \gamma_t = \frac{8L}{ t(t+1)}, \quad \tilde \mu = \frac{\mu}{2}, \quad m_t = \ceil{\frac{256\zeta^2  t}{\mu L}},  \quad T = \ceil{16\sqrt{\frac{L}{ \mu}}},
\end{align*}
and $ N_k \ge \max \left\{\frac{32\zeta^2}{\mu^2}, \left(\frac{3}{4}\right)^{K+1-k} \cdot N \right\}.$ 
The output of $\algname$ satisfies
\begin{align*}
	\EE[\|\widehat x_K - x^\star\|^2] \le \frac{1}{2^K} \|x_0 - x^\star\|^2  + \frac{20 \cdot  \EE[\|A^{-1} \nabla f(x^\star, z)\|^2]}{N} .
\end{align*}
\end{theorem}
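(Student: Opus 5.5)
The plan mirrors the proof of Theorem~\ref{thm:vr_quadratic}, with Proposition~\ref{prop:convergence_ASGD} replacing Proposition~\ref{prop:convergence_SGD}. Fix an epoch $k$ and let $\underline{x}_k$ denote the minimizer of $G(x) = F(x) - \dotp{\nabla F(\tilde x) - \widehat \nabla f(\tilde x), x}$, where $\tilde x = \widehat x_{k-1}$. Since $G$ differs from $F$ only by a linear term, it inherits $\mu$-strong convexity and $L$-smoothness from Assumption~\ref{assum: non-quadratic}. Moreover, the stochastic gradient differences of $g$ coincide with those of $f$, because the correction term $\nabla f(\tilde x,z) - \widehat\nabla f(\tilde x)$ does not depend on $x$. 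Hence Assumption~\ref{assum:vr_assum} holds for $g$ with the same $\zeta$, conditionally on $\tilde\cF_1$. This is the one point to verify carefully: Proposition~\ref{prop:convergence_ASGD} is stated in terms of $G$ and $g$ built from $f$, so its hypotheses are met.

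With the prescribed $\alpha_t,\gamma_t,\tilde\mu,m_t$ and $T = \ceil{16\sqrt{L/\mu}} \ge 16\sqrt{L/\mu}$, the second conclusion of Proposition~\ref{prop:convergence_ASGD} gives
\begin{align*}
\EE[\|\widehat x_k - \underline{x}_k\|^2 \mid \tilde\cF_1] \le \tfrac{1}{16}\|\widehat x_{k-1} - \underline{x}_k\|^2 .
\end{align*}
Taking total expectation and applying Young's inequality yields $\EE\|\widehat x_k - \underline{x}_k\|^2 \le \frac18 \EE\|\widehat x_{k-1}-x^\star\|^2 + \frac18\EE\|\underline{x}_k - x^\star\|^2$. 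Next, Lemma~\ref{lem:vr_quadratic_key} applies since $\liph = 0$; the requirement $\mu = \lmin$ holds because for quadratics with the norms considered $\lmin \ge \mu$, and the bound only uses $\|A^{-1}\|$ via Lemma~\ref{lem:general_lambda_min}. Together with $N_k \ge 32\zeta^2/\mu^2$, the lemma gives $\EE\|\underline{x}_k - x^\star\|^2 \le \frac{2\EE\|A^{-1}\nabla f(x^\star,z)\|^2}{N_k} + \frac1{16}\EE\|\widehat x_{k-1}-x^\star\|^2$.

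Combining via $\EE\|\widehat x_k - x^\star\|^2 \le 2\EE\|\widehat x_k-\underline{x}_k\|^2 + 2\EE\|\underline{x}_k-x^\star\|^2$ and repeating the arithmetic of Theorem~\ref{thm:vr_quadratic} produces the one-epoch contraction
\begin{align*}
\EE\|\widehat x_k - x^\star\|^2 \le \tfrac12\EE\|\widehat x_{k-1}-x^\star\|^2 + \frac{5\,\EE\|A^{-1}\nabla f(x^\star,z)\|^2}{N_k}.
\end{align*}
Proposition~\ref{prop:meta_convergence}, applied with $C_1 = 32\zeta^2/\mu^2$ and $C_2 = 5\EE\|A^{-1}\nabla f(x^\star,z)\|^2$, then gives the claimed bound with constant $4\cdot 5 = 20$.

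The main obstacle is bookkeeping rather than a new idea. One must check that conditioning on $\tilde\cF_1$ (which includes $\widehat\nabla f(\tilde x)$, built from samples independent of the inner-loop samples) is compatible with the tower property. One must also confirm that the ASGD guarantee holds for every realization of the linear shift.
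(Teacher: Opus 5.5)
Your proposal is correct and follows exactly the route the paper indicates. The paper omits this proof, saying it follows the proof of Theorem~\ref{thm:vr_quadratic} verbatim with Proposition~\ref{prop:convergence_ASGD} in place of Proposition~\ref{prop:convergence_SGD}. That substitution is valid since $T=\ceil{16\sqrt{L/\mu}}\ge 16\sqrt{L/\mu}$, and it yields the same per-epoch contraction with $C_2 = 5\,\EE[\|A^{-1}\nabla f(x^\star,z)\|^2]$, hence the constant $20$.

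One small wording fix: $\lmin\ge\mu$ does not give the equality $\mu=\lmin$ assumed in Lemma~\ref{lem:vr_quadratic_key}. As you yourself note, though, that equality is never used. The lemma's proof only needs $\|A^{-1}v\|\le \|v\|_*/\mu$ from Lemma~\ref{lem:general_lambda_min}, so the lemma applies here, just as the paper implicitly uses it in Theorem~\ref{thm:vr_quadratic}.
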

Below is a direct application of Theorem~\ref{thm:vr_quadratic_ac}. We defer its proof to Appendix~\ref{appendix:complexity_quad_ASGD}.

\begin{corollary}\label{cor:complexity_quad_ASGD}
Suppose that  Assumption~\ref{assum: non-quadratic} holds with parameter $L \ge \mu >0$ and $\liph = 0$, and that Assumption~\ref{assum:vr_assum} holds.  Write $A = \nabla^2 F(x^\star)$.  Let $n$ denote the total number of samples. Suppose that 
$$n \gtrsim_{\log} \sqrt{\frac{L}{\mu}} + \frac{\zeta^2}{\mu^2} +1.$$ 
There exists a choice of parameters of Algorithm~\ref{alg:metaVR} with Algorithm~\ref{alg:ASGD} (ASGD) such that the total number of samples used is less than $n$
and the output of $\algname$ satisfies
\begin{align*}
	\EE[\|\widehat x_K - x^\star\|^2] \le \frac{121 \cdot \EE[\|A^{-1} \nabla f(x^\star, z)\|^2]}{n} .
\end{align*}
In particular, when $\|\cdot\|$ is the $\ell_2$ norm, we have 
\begin{align*} 
	\EE[\|\widehat x_K - x^\star\|_2^2] \le   \frac{121 \cdot\trace{A^{-1}\Sigma A^{-1}}}{n} .
\end{align*}
\end{corollary}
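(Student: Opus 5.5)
The plan is to instantiate Theorem~\ref{thm:vr_quadratic_ac} with an explicit parameter choice and then verify the sample budget. Write $V := \EE[\|A^{-1}\nabla f(x^\star,z)\|^2]$ and $R_0 := \|\widehat x_0 - x^\star\|^2$. Keep the ASGD parameters exactly as prescribed there, including $T=\lceil 16\sqrt{L/\mu}\rceil$. Then the per-epoch inner-loop cost is
\[
S := \sum_{t=1}^{T} m_t \le T + \frac{256\zeta^2}{\mu L}\cdot\frac{T(T+1)}{2}.
\]
Since $T^2 \lesssim L/\mu$, this gives $S \lesssim \sqrt{L/\mu} + \zeta^2/\mu^2$.

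Next I choose $K$ and $N$. I set $N := \lfloor n/c\rfloor$ for a suitable universal constant $c$ and take
\[
N_k := \max\left\{\lceil 32\zeta^2/\mu^2\rceil,\ \left\lceil (3/4)^{K+1-k} N\right\rceil\right\}.
\]
The total sample count is then
\[
KS + \sum_{k} N_k \le KS + K\bigl(32\zeta^2/\mu^2+2\bigr) + N\sum_{j\ge1}(3/4)^j \le K\cdot O\!\left(\sqrt{L/\mu}+\zeta^2/\mu^2+1\right) + 3N .
\]
For $K$ I take $K = \lceil \log_2 n\rceil$ plus enough extra to handle the initialization term. The hypothesis $n \gtrsim_{\log} \sqrt{L/\mu}+\zeta^2/\mu^2+1$, with a large enough log power and constant, makes the first term at most $n/2$, and $3N\le n/2$ for $c\ge 6$. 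So the budget is respected.

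Theorem~\ref{thm:vr_quadratic_ac} then gives
\[
\EE\|\widehat x_K-x^\star\|^2 \le 2^{-K}R_0 + \frac{20V}{N} \le 2^{-K}R_0 + \frac{20cV}{n}.
\]
The main obstacle is the $2^{-K}R_0$ term, because the target bound has no initialization term. Choosing $K \ge \log_2(nR_0/V)$ makes it at most $V/n$, but this brings a $\log(R_0/V)$ factor into the sample requirement. I will absorb this into the $\gtrsim_{\log}$ hypothesis, interpreting the logarithmic factor as allowed to depend on $\log(nR_0/V)$ as in the appendix's explicit choice. If this proves troublesome I would instead note that $R_0$ enters only logarithmically and state it in $n_0$.

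Finally I balance constants. For example, with $c=6$ the bound is $(120+1)V/n = 121V/n$. The $\ell_2$ specialization uses
\[
\EE\|A^{-1}\nabla f(x^\star,z)\|_2^2 = \trace{A^{-1}\Sigma A^{-1}},
\]
which holds because $\EE\nabla f(x^\star,z)=\nabla F(x^\star)=0$.
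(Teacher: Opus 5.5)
Your proposal is correct and follows essentially the same argument as the paper. You keep the ASGD parameters of Theorem~\ref{thm:vr_quadratic_ac}, take $K=\log_2(nR_0/V)$, $N=n/6$ and $N_k=\max\{32\zeta^2/\mu^2,(3/4)^{K+1-k}N\}$, bound the per-epoch ASGD cost by $O(\zeta^2T^2/(\mu L)+\sqrt{L/\mu})$ and $\sum_k N_k\le 32K\zeta^2/\mu^2+n/2$, and read off $V/n+120V/n=121V/n$. The $\log(R_0/V)$ dependence you flag, which enters through $K$, is absorbed into the $\gtrsim_{\log}$ hypothesis in the same somewhat informal way in the paper, and the paper likewise ignores the rounding effects on the constant $121$.
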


As a consequence, when $\|\cdot\|$ is the $\ell_2$ norm and the sample size satisfies
\[
n \ge n_0 = \tilde O\left(\sqrt{\frac{L}{\mu}} + \frac{\zeta^2}{\mu^2}\right),
\]
Algorithm~\ref{alg:metaVR} with Algorithm~\ref{alg:ASGD} as a subroutine achieves the optimal statistical rate up to logarithmic factors.

This result may be viewed as a refinement of the classical worst-case guarantees of Ghadimi and Lan~\cite{ghadimi2013optimal}. In their framework, the stochastic oracle is characterized by a single scalar $\sigma^2$ that uniformly bounds the variance of gradient noise. In the smooth, strongly convex setting, their multi-stage AC-SA method attains an expected suboptimality bound of order $\sigma^2/(\mu n)$ once the number of samples exceeds $\tilde O(\sqrt{L/\mu})$. By strong convexity, this translates into a squared-distance guarantee of order $\sigma^2/(\mu^2 n)$.  Since in our notation
\[
\trace{\Sigma}=\EE\!\left[\|\nabla f(x^\star,z)\|_2^2\right]\le \sigma^2,
\]
their statistical term is at least of order $\frac{\trace{\Sigma}}{\mu^2 n}$. Our bound,
$\frac{\trace{A^{-1}\Sigma A^{-1}}}{n}$, is finer because it resolves the interaction between the local Hessian $A=\nabla^2 F(x^\star)$ and the noise covariance $\Sigma$. Using only $A \succeq \mu I$ recovers
\[
\frac{\trace{A^{-1}\Sigma A^{-1}}}{n} \le \frac{\sigma^2}{\mu^2 n}.
\]
Thus, we recover the classical worst-case rate as a coarse upper bound, but our instance-dependent quantity on the LHS can be much smaller in practice, as evidenced by the following example.

\subsubsection{Consequence for least-squares regression}\label{sec:improve_least_squares}
Now we apply the results in Section~\ref{sec:quadratic_objective} to the least-squares regression introduced in Section~\ref{sec:background}. Notably, we improve the best-known guarantees by a factor of the condition number.
\begin{corollary}
Let $\tilde \kappa$ be the statistical condition number defined in~\eqref{eqn:stochastic_cond_number}. Then, when $n  \gtrsim_{\log} \tilde \kappa$, with proper choice of hyperparameters, the $\algname$ Algorithm~\ref{alg:metaVR} implemented with Algorithm~\ref{alg:SGD} and $\|\cdot\| = \|\cdot\|_H$ satisfies
\begin{align*}
	\EE[F(\widehat x_K) - \inf F]  = \EE[\|\widehat x_K - x^\star\|_H^2] \le \frac{121\cdot \trace{H^{-1}\Sigma}}{n}.
\end{align*}
\end{corollary}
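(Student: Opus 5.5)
The plan is to instantiate Corollary~\ref{cor:quadratic_complexity} with the Hilbert norm $\|\cdot\| = \|\cdot\|_H$, which the paper's framework allows since all results there hold for any inner-product norm. First I will record the parameters for this norm from Example~2. Since $F$ is quadratic with Hessian $H$, we have $F(y)-F(x)-\dotp{\nabla F(x),y-x} = \frac12\|y-x\|_H^2$. Hence Assumption~\ref{assum: non-quadratic} holds with $\mu = L = 1$ and $\liph = 0$. The dual norm is $\|\cdot\|_* = \|\cdot\|_{H^{-1}}$, so $\lmin = \inf_{\|v\|_H=1}\|Hv\|_{H^{-1}} = 1 = \mu$, as Lemma~\ref{lem:vr_quadratic_key} requires.

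For Assumption~\ref{assum:vr_assum}, Example~2 already computes $\EE[\|\nabla f(x,\xi,y)-\nabla f(x',\xi,y)\|_*^2]\le\tilde\kappa\|x-x'\|_H^2$. The assumption is stated for the centered differences. Centering only lowers the second moment, since $\EE\|X-\EE X\|_*^2\le \EE\|X\|_*^2$ in a Hilbert norm. Thus $\zeta^2 = \tilde\kappa$. Finite second moments of $\nabla f(x,\cdot)$ follow from the assumed finite fourth moments of $\xi$ together with a finite second moment of $y$.

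Next I plug these into Corollary~\ref{cor:quadratic_complexity}. The sample-size requirement $n\gtrsim_{\log} L/\mu + \zeta^2/\mu^2 + 1$ becomes $n \gtrsim_{\log} \tilde\kappa + 2$. Because $\EE[\|\xi\|_{H^{-1}}^2\xi\xi^\top]\succeq \EE[\xi\xi^\top H^{-1}\xi\xi^\top]$ and taking the trace against $H^{-1}$ gives $\tilde\kappa d\ge \EE\|\xi\|_{H^{-1}}^4\ge d^2$, we get $\tilde\kappa\ge d\ge1$. So the condition reduces to $n\gtrsim_{\log}\tilde\kappa$. The corollary then gives $\EE[\|\widehat x_K-x^\star\|_H^2]\le 121\,\EE[\|A^{-1}\nabla f(x^\star,z)\|_H^2]/n$ with $A=H$.

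Finally I compute the numerator and the identity with the excess risk. Since $\nabla f(x^\star,\xi,y) = -\epsilon\xi$ has mean zero,
\[
\EE\|H^{-1}\epsilon\xi\|_H^2 = \EE[\epsilon^2\xi^\top H^{-1}\xi] = \trace{H^{-1}\EE[\epsilon^2\xi\xi^\top]} = \trace{H^{-1}\Sigma}.
\]
Also, because $F$ is quadratic with $\nabla F(x^\star)=0$, we have $F(x)-\inf F = \frac12\|x-x^\star\|_H^2\le\|x-x^\star\|_H^2$. The statement's "$=$" should therefore be read as holding up to this factor $\tfrac12$, which only helps the bound.

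None of the steps is a real obstacle. The only care needed is confirming that the dual-norm identification and the centering step give exactly $\zeta^2=\tilde\kappa$ with $\mu=L=1$, so that the threshold contains no hidden condition number. This is where the improvement over prior work comes from.
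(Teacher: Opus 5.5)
Your proposal is correct and follows the paper's argument exactly: in the $\|\cdot\|_H$ geometry one has $\mu=L=1$, $\liph=0$, $\zeta^2=\tilde\kappa$, and Corollary~\ref{cor:quadratic_complexity} gives the rest. Your additional checks are all sound: $\lmin=1$, the centering step, $\tilde\kappa\ge d\ge 1$ to absorb the constant in the threshold, and $\EE[\|H^{-1}\nabla f(x^\star,z)\|_H^2]=\trace{H^{-1}\Sigma}$. You are also right that $F(x)-\inf F=\tfrac12\|x-x^\star\|_H^2$, so the displayed ``$=$'' in the statement should really be ``$\le$'' (or carry a factor $\tfrac12$).

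One small fix: finite second moments of $\nabla f(x,\cdot)$ do not follow from $\EE[y^2]<\infty$ and fourth moments of $\xi$ alone, since the cross term $\EE[y^2\|\xi\|_2^2]$ can diverge. Instead, write $\nabla f(x,\xi,y)=-\epsilon\xi+\dotp{x-x^\star,\xi}\xi$ and use $\trace{\Sigma}<\infty$ (otherwise the bound is vacuous) together with the fourth moments of $\xi$.
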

\begin{proof}
Note  that by the discussion in Section~\ref{sec:background}, Assumption~\ref{assum: non-quadratic} holds with $\mu = L = 1$ and $\liph = 0$, and Assumption~\ref{assum:vr_assum} holds with $\zeta^2 = \tilde \kappa$. The result then follows from Corollary~\ref{cor:quadratic_complexity}.
\end{proof}
\begin{remark}
First, we note that one can also use acceleration (i.e., ASGD instead of SGD in the inner loop) and obtain a similar result; there is no improvement since the condition number is unity in this norm (i.e., $\frac{L}{\mu} =1$). Second, note that our result holds under exactly the same condition as~\cite{jain2018accelerating}, but the sample size we need to match the asymptotically optimal rate is $\tilde O(\tilde \kappa)$. To our knowledge,~\cite[Corollary 2]{jain2018accelerating} has the best-known instance-dependent rate for least-squares regression, and it requires $\tilde O(\sqrt{\kappa \tilde \kappa})$ samples, where $\kappa = \frac{R^2}{\mu}$ and $R^2$ is the smallest non-negative number such that 
\begin{align*}
	\EE[\|\xi\|_2^2 \xi \xi^\top] \preceq R^2 H.
\end{align*}
It is straightforward to verify that $\kappa \ge \tilde \kappa$, so our complexity is always better. When $\xi$ is Gaussian,  one can verify that $\tilde \kappa = O(d)$ while $\kappa = O(\frac{\trace{H}}{\mu})$~\cite{jain2018accelerating}. In this setting, our rate improves theirs by a factor depending on the condition number of $H$.
\end{remark}

\subsection{General non-quadratic objective}
We now turn to the general setting where $\liph >0$. For simplicity, we present only the stronger accelerated results. The proof follows the same structure as that of Theorem~\ref{thm:vr_quadratic}---except that Lemma~\ref{lem:vr_quadratic_key} is replaced by Lemma~\ref{lem:vr_key_non_qud}, and Proposition~\ref{prop:convergence_SGD} is replaced by Proposition~\ref{prop:convergence_ASGD}---we omit the details for brevity.
\begin{theorem}\label{thm:vr_nonquadratic}
Suppose that  Assumption~\ref{assum: non-quadratic} holds with parameter $L \ge \mu >0$ and $\liph > 0$, and that Assumptions~\ref{assum:vr_assum} and~\ref{assum:light_tail_non_quadratic} hold. Write $A = \nabla^2 F(x^\star)$. Let $N$ be a positive integer. Assume that we use Algorithm~\ref{alg:ASGD} (ASGD) as a subroutine  for Algorithm~\ref{alg:metaVR} and its parameters satisfy
\begin{align*}
	\alpha_t = \frac{2}{t+1}, \quad \gamma_t = \frac{8L}{t(t+1)}, \quad \tilde \mu = \frac{\mu}{2}, \quad m_t = \ceil{\frac{256\zeta^2 t}{\mu L}},  \quad T = \ceil{32\sqrt{L/\mu}},
\end{align*}
and 
\begin{align*}
	N_k \ge \max\left\{ \frac{128(\zeta^2 + d\sigma_2^2)}{\mu^2}, \left( \frac{3}{4} \right)^{K+1-k} \cdot N, \ch \right\}, 
\end{align*}
where $\ch$ is the smallest integer larger than 
\begin{align*}
	\max \left\{ \frac{1024 \liph^2  \sigma_1^2}{\lmin^2}, \frac{128\liph \sigma_1}{\lmin}\right\} \cdot \max\left\{ d, \log \left(\frac{4 \lmin^2}{\mu^2 \liph^2 \trace{\Lambda}}\right) \right\}
\end{align*}
and 
\begin{align*}
	2\ceil{\max \left\{ \frac{1024 \liph^2  \sigma_1^2}{\lmin^2}, \frac{128\liph \sigma_1}{\lmin}\right\}} \left( \log \left( \max \left\{ \frac{1024 \liph^2  \sigma_1^2}{\lmin^2}, \frac{128\liph \sigma_1}{\lmin}\right\}\right) +1\right).
\end{align*}
Then the output of $\algname$ satisfies 
\begin{align*}
	\EE[\|\widehat x_K -  x^\star\|^2] \le \frac{1}{2^K} \| \widehat x_0 - x^\star\|^2 + \frac{84 \cdot \EE[\|A^{-1} \nabla f(x^\star, z)\|^2]}{N}  .
\end{align*}
\end{theorem}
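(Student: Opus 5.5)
The plan is to follow the template of the proof of Theorem~\ref{thm:vr_quadratic}. We reduce to the abstract contraction of Proposition~\ref{prop:meta_convergence}, verifying inequality~\eqref{eqn:half_contraction_exp} with $C_1 = \max\{128(\zeta^2+d\sigma_2^2)/\mu^2, \ch\}$ and $C_2 = 21\cdot\EE[\|A^{-1}\nabla f(x^\star,z)\|^2]$. Then $4C_2/N$ gives the claimed constant $84$.

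Fix an epoch $k$ and let $\underline{x}_k$ denote the minimizer of $G$. First, the inner loop. The choice $T=\ceil{32\sqrt{L/\mu}}\ge 16\sqrt{L/\mu}$ and the prescribed $\alpha_t,\gamma_t,\tilde\mu,m_t$ let us invoke Proposition~\ref{prop:convergence_ASGD}. This gives $\EE[\|\widehat x_k-\underline{x}_k\|^2\mid\tilde\cF_1]\le \frac1{16}\|\widehat x_{k-1}-\underline{x}_k\|^2$. By Young's inequality and the tower property, this is at most $\frac18\EE\|\widehat x_{k-1}-x^\star\|^2+\frac18\EE\|\underline{x}_k-x^\star\|^2$. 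Proposition~\ref{prop:convergence_ASGD} only needs Assumptions~\ref{assum: non-quadratic} and~\ref{assum:vr_assum} for $G$. Since $G$ differs from $F$ by a linear term, it inherits $\mu$-strong convexity and $L$-smoothness. The noise of $\nabla g$ is $\nabla f(x,z)-\nabla f(\tilde x,z)$ up to constants, so it satisfies~\eqref{eqn:vr_assum} with the same $\zeta$. Hence this step is immediate.

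Second, the epoch-wise population solution. Since $N_k\ge\ch$, both sample-size conditions of Lemma~\ref{lem:vr_key_non_qud} hold with $n=N_k$. Here we note that $\trace{\Lambda}$ in the definition of $\ch$ should be read as $\EE[\|A^{-1}\nabla f(x^\star,z)\|^2]$, which is the quantity in the lemma (these coincide for the $\ell_2$ norm). The lemma gives $\EE[\|\underline{x}_k-x^\star\|^2\mid\widehat x_{k-1}]\le 9V/N_k+\frac{8(\zeta^2+d\sigma_2^2)}{N_k\mu^2}\|\widehat x_{k-1}-x^\star\|^2$, where $V:=\EE\|A^{-1}\nabla f(x^\star,z)\|^2$. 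Using $N_k\ge128(\zeta^2+d\sigma_2^2)/\mu^2$, the second coefficient is at most $1/16$.

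Third, combine:
\begin{align*}
\EE\|\widehat x_k-x^\star\|^2&\le 2\EE\|\widehat x_k-\underline{x}_k\|^2+2\EE\|\underline{x}_k-x^\star\|^2 \le \tfrac14\EE\|\widehat x_{k-1}-x^\star\|^2+\tfrac94\EE\|\underline{x}_k-x^\star\|^2\\
&\le \big(\tfrac14+\tfrac{9}{64}\big)\EE\|\widehat x_{k-1}-x^\star\|^2+\tfrac{81V}{4N_k},
\end{align*}
which is at most $\frac12\EE\|\widehat x_{k-1}-x^\star\|^2+\frac{21V}{N_k}$. Proposition~\ref{prop:meta_convergence} with $N_k\ge\max\{C_1,(3/4)^{K+1-k}N\}$ then yields the stated bound $2^{-K}\|\widehat x_0-x^\star\|^2+84V/N$.

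The only delicate point is checking that conditioning is handled correctly. Lemma~\ref{lem:vr_key_non_qud} conditions on $\tilde x$, and Proposition~\ref{prop:convergence_ASGD} conditions on $(\tilde x,\widehat\nabla f(\tilde x))$; both use fresh samples within the epoch. Taking total expectations therefore justifies the chain above. Everything else is bookkeeping of constants, and the slack in them ($81/4\le21$) is what allows the larger constant $84$ than in Theorem~\ref{thm:vr_quadratic}.
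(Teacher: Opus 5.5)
Your proposal is correct and is exactly the argument the paper has in mind: the proof of Theorem~\ref{thm:vr_quadratic}, with Lemma~\ref{lem:vr_key_non_qud} and Proposition~\ref{prop:convergence_ASGD} substituted. Your bookkeeping ($\frac14+\frac{9}{64}\le\frac12$, $\frac{81}{4}\le 21$, $4\cdot 21=84$) is precisely where the constant $84$ comes from, and your remark that $\trace{\Lambda}$ in $\ch$ must be read as $\EE[\|A^{-1}\nabla f(x^\star,z)\|^2]$ in order to invoke Lemma~\ref{lem:vr_key_non_qud} is also right; the two agree when $\|\cdot\|=\|\cdot\|_2$.
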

The following is a direct consequence of Theorem~\ref{thm:vr_nonquadratic}. Its proof mirrors the proof of Corollary~\ref{cor:quadratic_complexity}, so we omit it for brevity.
\begin{corollary}\label{cor:vr_nonquadratic}
Suppose that  Assumptions~\ref{assum: non-quadratic},~\ref{assum:vr_assum}, and~\ref{assum:light_tail_non_quadratic} hold.  Write $A = \nabla^2 F(x^\star)$, and let $C_H$ be defined as in Theorem~\ref{thm:vr_nonquadratic}. Let $n$ denote the total number of samples. Suppose that 
$$n \gtrsim_{\log} \sqrt{ \frac{L}{\mu} }+ \frac{\zeta^2 + d\sigma_2^2}{\mu^2} + C_H.$$ 
There exists a universal constant \(C>0\) and a parameter choice for Algorithm~\ref{alg:metaVR}, with Algorithm~\ref{alg:ASGD} (ASGD) as a subroutine, such that the total sample complexity is less than \(n\), and the output of $\algname$ satisfies
\begin{align*}
	\EE[\|\widehat x_K - x^\star\|^2] \le \frac{C \cdot  \EE[\|A^{-1} \nabla f(x^\star, z)\|^2]}{n}.
\end{align*}
In particular, when $\|\cdot\|$ is the $\ell_2$ norm, we have 
\begin{align}\label{eqn:upper_bound_non_quad}
	\EE[\|\widehat x_K - x^\star\|_2^2] \le   \frac{C \cdot\trace{A^{-1}\Sigma A^{-1}}}{n} .
\end{align}
\end{corollary}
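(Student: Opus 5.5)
The plan is to instantiate Theorem~\ref{thm:vr_nonquadratic} with explicit choices of $(K, N, \{N_k\})$, mirroring the argument for Corollary~\ref{cor:quadratic_complexity}. Write $D_0 := \|\widehat x_0 - x^\star\|^2$ and $V := \EE[\|A^{-1}\nabla f(x^\star,z)\|^2]$, and define
\[
M := \max\left\{\frac{128(\zeta^2+d\sigma_2^2)}{\mu^2}, \ch\right\}.
\]
The inner loop is fixed by the theorem: $T=\ceil{32\sqrt{L/\mu}}$ iterations with minibatch sizes $m_t=\ceil{256\zeta^2 t/(\mu L)}$. The per-epoch inner-loop sample cost is therefore
\[
S_{\mathrm{in}}=\sum_{t=1}^T m_t \le T + \frac{256\zeta^2 T(T+1)}{2\mu L}\lesssim \sqrt{L/\mu} + \frac{\zeta^2}{\mu^2},
\]
using $T^2\lesssim L/\mu$. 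So each epoch costs $O(\sqrt{L/\mu}+\zeta^2/\mu^2)$ samples in the inner loop, plus the $N_k$ samples for the anchor gradient.

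Next, I choose $N_k=\max\{M,\ceil{(3/4)^{K+1-k}N}\}$. The total sample count is then at most
\[
K\,(S_{\mathrm{in}}+M+1) + \sum_k (3/4)^{K+1-k}N \le K\,(S_{\mathrm{in}}+M+1)+3N.
\]
Setting $N=\floor{n/6}$ makes the geometric part at most $n/2$. Requiring
\[
n \ge 2K\,(S_{\mathrm{in}}+M+1)
\]
keeps the total below $n$. This requirement is exactly the hypothesis $n\gtrsim_{\log}\sqrt{L/\mu}+(\zeta^2+d\sigma_2^2)/\mu^2+\ch$, provided $K$ is logarithmic.

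With these choices, Theorem~\ref{thm:vr_nonquadratic} gives
\[
\EE[\|\widehat x_K-x^\star\|^2]\le 2^{-K}D_0+\frac{84V}{N}\le 2^{-K}D_0+\frac{504V}{n}+O(V/n^2).
\]
To make the first term at most $V/n$, I take $K=\ceil{\log_2(nD_0/V)}\vee 1$. This $K$ is logarithmic in $n$ and in the ratio $D_0/V$, which is what the $\gtrsim_{\log}$ notation absorbs. The result is the claimed bound $CV/n$ with a universal constant $C$. The $\ell_2$ specialization follows from the identity $\EE\|A^{-1}\nabla f(x^\star,z)\|_2^2=\trace{A^{-1}\Sigma A^{-1}}$, using $\EE\nabla f(x^\star,z)=0$.

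The main obstacle is the bookkeeping behind the word ``logarithmic.'' The choice of $K$ depends on the initial distance $D_0$ and on $V$, not only on $n$. The cleanest handling is to let the hidden logarithmic factor include $\log(e\,nD_0/V)$, exactly as in the omitted proof of Corollary~\ref{cor:quadratic_complexity}. A secondary point is the degenerate case $V=0$ (noiseless at optimum). There I would take $K\asymp \log n$ and accept an error of order $D_0/n$, or simply treat it via the same convention. Once $K$ is fixed, everything else is a direct substitution into Theorem~\ref{thm:vr_nonquadratic}.
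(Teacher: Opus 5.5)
Your proposal is correct and is essentially the paper's argument. The paper omits this proof as mirroring Corollaries~\ref{cor:quadratic_complexity} and~\ref{cor:complexity_quad_ASGD}, and those proofs make the same choices as yours: they instantiate the theorem (for this corollary, Theorem~\ref{thm:vr_nonquadratic}) with $K \approx \log_2(nD_0/V)$, $N \approx n/6$ and the prescribed $N_k$, and they bound the per-epoch ASGD cost by $O(\sqrt{L/\mu}+\zeta^2/\mu^2)$ and the geometric part of $\sum_k N_k$ by $n/2$. Your remark that the logarithmic factor implicitly contains $\log(nD_0/V)$, and not only $\log n$, matches the paper's own implicit convention.
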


As we will show in Section~\ref{sec:lower_bound}, when $\|\cdot\| = \| \cdot \|_2$ and the sample size $n$ exceeds $n_0 = \tilde O\left( \sqrt{ \frac{L}{\mu} }+ \frac{\zeta^2 + d\sigma_2^2}{\mu^2} + C_H \right)$, the upper bound~\ref{eqn:upper_bound_non_quad} matches the non-asymptotic instance-dependent lower bound up to a constant factor. Similar to the quadratic case, the deterministic portion of $n_0$, given by $\tilde O\left(\sqrt{\frac{L}{\mu}}\right)$, is   optimal for first-order methods. 

Compared to the quadratic case, the stochastic part of the sample requirement contains two additional terms: $\ch$ and $\frac{d\sigma_2^2}{\mu^2}$. The first term, $\ch$, captures the effect of the nonconstant Hessian in the non-quadratic setting. It diminishes as the Hessian Lipschitz constant $\liph$ tends to zero and vanishes entirely when the objective is quadratic. As we will show in Proposition~\ref{prop:necessity_liph_intro}, the dependence of $C_H$ on the $\liph^2$ term is necessary for general non-quadratic problems. The second term, $\frac{d\sigma_2^2}{\mu^2}$, plays a role analogous to $\frac{\zeta^2}{\mu^2}$, reflecting the need for a sufficiently large sample size to control the effect of multiplicative noise. In many natural cases, $d\sigma_2^2$ and $\zeta^2$ are of the same order; for instance, when $\nabla f(x, \tilde A, \tilde b) = \nabla F(x) + \tilde A x + \tilde b$ with $\tilde A$ having i.i.d. standard Gaussian entries and $\tilde b$ a standard Gaussian vector, under the standard Euclidean norm, we have $\zeta^2 = d\sigma_2^2$.

\subsubsection{Consequence for generalized linear model with $\ell_2$-regularization}
We now apply Theorem~\ref{thm:vr_nonquadratic} and Corollary~\ref{cor:vr_nonquadratic} to the generalized linear model introduced in Section~\ref{sec:background}.  Recall that the population objective is given by
\begin{align*}
	\min_{x\in\RR^d}\; F(x)
	:= \EE_{(\xi,y)\sim P}[\ell(x,\xi,y)] +  \frac{\lambda}{2}\|x\|_2^2 = \EE_{(\xi,y)\sim P}\left[u(\dotp{x,\xi}) - y \dotp{x,\xi}\right] + \frac{\lambda}{2}\|x\|_2^2,
\end{align*}
\begin{proposition}
Set $\|\cdot\| = \| \cdot \|_2$ and suppose that the generalized linear model~\eqref{eqn:generalized_linear_model} satisfies Assumption~\ref{assum:generalized_linear_model}.  Let $n$ denote the total number of samples and suppose that 
\begin{align*}
	n \gtrsim_{\log} \sqrt{\frac{d L_1 \sigma^2 + \lambda}{\gamma + \lambda}} + \frac{d L_1^2 \sigma^4}{(\gamma +\lambda)^2} + \frac{d L_2^2 \sigma^8 (\sigma_\star^2 + \sigma_y^2)}{(\gamma +\lambda)^4}.
\end{align*}
There exists a choice of parameters of Algorithm~\ref{alg:metaVR} with Algorithm~\ref{alg:ASGD} as a subroutine, such that the total number of samples is less than $n$ and the output of $\algname$ satisfies
\begin{align*}
	\EE[\|\widehat x_K - x^\star\|_2^2] \le \frac{361 \cdot \trace{\Lambda}}{n}.
\end{align*}
Here $\Lambda= \nabla^2 F(x^\star)^{-1} \cov(\nabla \ell (x^\star, \xi, y))  \nabla^2 F(x^\star)^{-1} $.
\end{proposition}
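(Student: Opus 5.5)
The plan is to invoke Corollary~\ref{cor:vr_nonquadratic} with $\|\cdot\| = \|\cdot\|_2$, so the whole job is to bound the constants $(\mu, L, \liph, \lmin, \zeta, \sigma_1, \sigma_2)$ for the regularized GLM and check that the sample-size threshold of the corollary is dominated by the one displayed in the statement. Most constants were already computed in Section~\ref{sec:background}. Strong convexity gives $\mu = \gamma\sigma_{\min} + \lambda$ (I will write the threshold in terms of $\gamma+\lambda$, absorbing $\sigma_{\min}$ and $\sigma$ as is implicit in the statement). Smoothness gives $L = L_1\EE\|\xi\|_2^2 + \lambda \lesssim dL_1\sigma^2 + \lambda$. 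The Hessian Lipschitz constant satisfies $\liph \lesssim L_2\sigma^3/(\gamma\sigma_{\min}+\lambda)$. Assumption~\ref{assum:vr_assum} holds with $\zeta^2 \lesssim dL_1^2\sigma^4$. Also $\lmin = \lambda_{\min}(\nabla^2F(x^\star)) \ge \mu$ in the Euclidean norm. Plugging these in, $\sqrt{L/\mu}$ and $\zeta^2/\mu^2$ give exactly the first two terms of the threshold.

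The new work is verifying Assumption~\ref{assum:light_tail_non_quadratic}. I would write
\begin{align*}
\nabla f(x,\xi,y) = \big(u'(\dotp{x^\star,\xi}) - y\big)\xi + \big(u'(\dotp{x,\xi}) - u'(\dotp{x^\star,\xi})\big)\xi + \lambda x .
\end{align*}
For a unit vector $v$, the first term projects to $(u'(\dotp{x^\star,\xi})-y)\dotp{v,\xi}$. This is a product of sub-Gaussians with $\psi_2$ norms at most $\sigma_\star+\sigma_y$ and $\sigma$, so its $\psi_1$ norm is $\lesssim \sigma(\sigma_\star+\sigma_y)$. The second term is bounded in absolute value by $L_1|\dotp{x-x^\star,\xi}||\dotp{v,\xi}|$, which has $\psi_1$ norm $\lesssim L_1\sigma^2\|x-x^\star\|_2$. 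Centering only costs a constant, and $\psi_1$ bounds convert to sub-exponential parameters $(\nu^2,\alpha)$ with $\nu,\alpha \lesssim \|\cdot\|_{\psi_1}$. Hence Assumption~\ref{assum:light_tail_non_quadratic} holds with $\sigma_1 \lesssim \sigma(\sigma_\star+\sigma_y)$ and $\sigma_2 \lesssim L_1\sigma^2$. Then $d\sigma_2^2/\mu^2 \lesssim dL_1^2\sigma^4/(\gamma+\lambda)^2$, which is again the second term.

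It remains to bound $\ch$, which I expect to be the fiddly step. Its leading piece is
\begin{align*}
\frac{\liph^2\sigma_1^2}{\lmin^2}\, d \lesssim \frac{L_2^2\sigma^6}{\mu^2}\cdot\frac{\sigma^2(\sigma_\star^2+\sigma_y^2)}{\mu^2}\, d = \frac{dL_2^2\sigma^8(\sigma_\star^2+\sigma_y^2)}{(\gamma+\lambda)^4},
\end{align*}
which is the third term. The linear piece $\liph\sigma_1/\lmin$ is at most the square root of this, so it is absorbed once the quantity is $\gtrsim 1$ (otherwise it is at most $1$). The logarithmic factors in $\ch$ — $\log(4\lmin^2/(\mu^2\liph^2\trace{\Lambda}))$ and $\log\max\{\cdot\}$ — are swallowed by $\gtrsim_{\log}$. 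The one concern is that the first log involves $\trace{\Lambda}$, which could be tiny. I would handle this by noting that the lower bound $n\ge\ch$ is used only to make this log term harmless, so it can be treated as a logarithmic factor; if needed, I would note that the log only matters when it exceeds $d$.

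Finally, Corollary~\ref{cor:vr_nonquadratic} yields $\EE\|\widehat x_K - x^\star\|_2^2 \le C\,\trace{\Lambda}/n$. Here $\Lambda$ uses $\cov(\nabla\ell(x^\star,\xi,y))$, which equals $\cov(\nabla f(x^\star,\cdot))$ because the regularizer's gradient is deterministic. The explicit constant $361$ comes from tracking the constant $84$ of Theorem~\ref{thm:vr_nonquadratic} through the sample-allocation argument of Corollary~\ref{cor:quadratic_complexity}, in the same way $20$ became $121$ there.
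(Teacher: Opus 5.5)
Your route is the paper's. Its proof also splits $\dotp{v,\nabla_x\ell(x,\xi,y)}$ by the triangle inequality in $\psi_1$ and uses $\|XY\|_{\psi_1}\le\|X\|_{\psi_2}\|Y\|_{\psi_2}$ to get $\sigma_1\asymp\sigma(\sigma_\star+\sigma_y)$ and $\sigma_2\asymp L_1\sigma^2$. It then says only that combining Corollary~\ref{cor:vr_nonquadratic} with the constants of Section~\ref{sec:background} gives the result. Where you add bookkeeping beyond that, one step fails. The paper's one-line conclusion does not address it either, so the defect really sits in the displayed threshold on $n$.

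\textbf{The failing step: the linear branch of $\ch$.}
\begin{itemize}
\item In $\ch$ that branch is $128\,a\cdot\max\{d,\log(\cdot)\}$ with $a:=\liph\sigma_1/\lmin$, not $a$ alone; you dropped the factor $\max\{d,\cdot\}$.
\item When $a<1/8$ this is the active branch of the max, and it exceeds $1024a^2d$ by the factor $1/(8a)$. So ``at most the square root of this'' gives no domination.
\item With the GLM constants you only get $128\,a\,d\lesssim dL_2\sigma^4(\sigma_\star+\sigma_y)/(\gamma+\lambda)^2$. This is dominated by the second displayed term when $L_2(\sigma_\star+\sigma_y)\lesssim L_1^2$, as in logistic regression, but not by the displayed threshold in general.
\item Example: take $\sigma=\lambda=1$, $\gamma=0$, $\sigma_\star+\sigma_y\asymp 1$, $L_1=1/d$, $L_2=1/\sqrt d$. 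All three displayed terms are then $O(1)$. With the values of $\liph$ and $\sigma_1$ your argument plugs in, $a\asymp 1/\sqrt d$ and $\ch\gtrsim\sqrt d$.
\end{itemize}
So the hypothesis as written does not give $n\ge\ch$, and Corollary~\ref{cor:vr_nonquadratic} cannot be invoked. To close the gap, either add $dL_2\sigma^4(\sigma_\star+\sigma_y)/(\gamma+\lambda)^2$ (or $\ch$ itself) to the requirement on $n$, or assume $L_2(\sigma_\star+\sigma_y)\lesssim L_1^2$.

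\textbf{A smaller slip: the constant $361$.} Pushing $84$ through the allocation of Corollary~\ref{cor:quadratic_complexity}, with $N=n/6$, gives $1+6\cdot 84=505$, not $361$. To reach $361$, take $N=7n/30$. The geometric part of $\sum_k N_k$ then uses at most $3N=0.7n$, leaving $0.3n$ for the remaining sample costs, which the threshold covers up to constants and logarithms. Alternatively, settle for the unspecified universal constant that Corollary~\ref{cor:vr_nonquadratic} actually delivers.
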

\begin{proof} 
	First, for any vector $v$ of unit $\ell_2$ norm, we have 
	\begin{align*}
		\|\dotp{v, \nabla_x \ell(x, \xi, y)}\|_{\psi_1} &\le \|\dotp{v, \nabla_x \ell(x^\star, \xi, y)}\|_{\psi_1} +\|\dotp{v, \nabla_x \ell(x, \xi, y) - \nabla_x \ell(x^\star, \xi, y)}\|_{\psi_1}\\
		&\le  \|  u'(\dotp{x^\star,\xi}) \dotp{v,\xi}  \|_{\psi_1} + \| y \dotp{v, \xi} \|_{\psi_1} + \| |u'(\dotp{x,\xi})- u'(\dotp{x^\star,\xi})| \abs{\dotp{v, \xi}}\|_{\psi_1}\\
		&\le  \|  u'(\dotp{x^\star,\xi}) \dotp{v,\xi}  \|_{\psi_1} + \| y \dotp{v, \xi} \|_{\psi_1} + \| L_1\abs{\dotp{x-x^\star,\xi}} \abs{\dotp{v, \xi}}\|_{\psi_1}\\
		&\le \sigma (\sigma_\star + \sigma_y)  + L_1 \sigma^2 \|x-x^\star\|_2,
	\end{align*}
	where the first two inequalities are due to the triangle inequality, the third follows from the Lipschitz continuity of $u'$, and the last follows from Assumption~\ref{assum:generalized_linear_model} combined with the standard bound $\|XY\|_{\psi_1} \le \|X\|_{\psi_2}\|Y\|_{\psi_2}$ for any random variables $X$ and $Y$; see~\cite[Lemma 2.8.6]{vershynin2018high}. Moreover, by Proposition 2.8.1 and Exercise 2.44 in~\cite{vershynin2018high}, there exists some universal constant $C$ such that Assumption~\ref{assum:light_tail_non_quadratic} holds for $\nabla \ell(x, \xi,y)$ with $\sigma_1 = C\sigma (\sigma_\star + \sigma_y)$ and $\sigma_2 =  CL_1 \sigma^2 $. Combining Corollary~\ref{cor:vr_nonquadratic} with the calculations in Section~\ref{sec:background}, the result follows.
\end{proof}

\section{Local lower bounds in the general setting}\label{sec:lower_bound}
In this section, we prove a non-asymptotic local minimax lower bound for the general problem~\eqref{eqn:objective}. To this end, we need to define the class of instances over which the lower bound will hold. For any given $n \ge 1$, a $C^2$ strongly convex function $F$, and a positive semi-definite covariance matrix $\Sigma$, we denote $\nabla^2 F(x^\star(F))$ by $A$ and recall the instance class we defined in Section~\ref{sec:suboptimality}:
\begin{align*}
\nclass (n, F, \Sigma) :=  \left\{ (f,P) \left| 
\begin{aligned}
	&\text{ $\nabla^2 F_{f,P} \equiv \nabla^2 F$ and $ \|x^\star(F_{f,P}) - x^\star(F)\|_2 \le 2\cdot \sqrt{\frac{\trace{A^{-1} \Sigma A^{-1}}}{n}}$,} \\
	& \text{ $\nabla f(x^\star(F_{f,P}), z)$ has distribution  $N(0, \Sigma)$ when $z\sim P$}
\end{aligned}
\right.\right\},
\end{align*}
This instance class is identical to the one defined earlier, and we refer the reader to Section~\ref{sec:suboptimality} for a detailed discussion of why this is a reasonable local neighborhood to define. 

Next, we state the general lower bound, which yields Proposition~\ref{prop: mainlowerboundtheorem_quadratic_intro} as a special case by letting $\liph \rightarrow 0$.

\begin{theorem}\label{thm: mainlowerboundtheorem_intro}
Suppose that $F$ satisfies Assumption~\ref{assum: non-quadratic} with parameters $L \ge \mu >0 , \liph > 0$. We denote $\nabla^2 F(x^\star(F))$ by $A$. For any  positive definite matrix $\Sigma$ and any $n \ge  64\cdot  \liph^2 \cdot \trace{A^{-1} \Sigma  A^{-1}} $,  we have 
\begin{align}
	\quad \inf_{\widehat x_n \in \widehat \cX_n} \sup_{(f,P)\in \nclass(n,F,\Sigma)} \EE_{z_i\overset{\mathrm{iid}}{\sim} P}[\|\widehat x_n(\{z_i\}_{i=1}^{n}, f) - x^\star(F_{f,P})\|_2 ^2] \ge	
	\frac{ \trace{A^{-1} \Sigma A^{-1}}}{4(\pi^2 +1) n}  .\label{eqn:two_regimes_non_quadratic}
\end{align}
\end{theorem}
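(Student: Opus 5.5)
The plan is to reduce the problem to a Gaussian location model and apply a multivariate van Trees (Bayesian Cramér--Rao) inequality with a $\cos^2$-type prior. The factor $\pi^2+1$ in~\eqref{eqn:two_regimes_non_quadratic} comes from the prior Fisher information $\pi^2$ of the density $\cos^2(\pi w/2)$ on $[-1,1]$, plus one unit of data information.

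\textbf{Step 1: construct the sub-family.} The constraint $\nabla^2 F_{f,P}\equiv\nabla^2 F$ rules out translating $F$. I will instead tilt it linearly. For $g\in\RR^d$, take $f(x,z)=F(x)-\dotp{z,x}$ with $z\sim P_g:=N(g,\Sigma)$. Then $F_{f,P_g}(x)=F(x)-\dotp{g,x}$ has the same Hessian function as $F$. Its minimizer $\phi(g):=x^\star(g)$ solves $\nabla F(\phi(g))=g$. Moreover, $\nabla f(\phi(g),z)=g-z\sim N(0,\Sigma)$. Hence such an instance lies in $\nclass(n,F,\Sigma)$ as soon as $\|\phi(g)-x^\star\|_2\le 2\sqrt{\trace{\Lambda}/n}$, where $\Lambda:=A^{-1}\Sigma A^{-1}$. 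The data carry Fisher information $nI_{\mathrm{data}}=n\Sigma^{-1}$ about $g$, which does not depend on $g$. The target is the nonlinear functional $\phi(g)$, with $\nabla\phi(g)=\nabla^2F(\phi(g))^{-1}$.

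\textbf{Step 2: choose the prior and apply van Trees.} Write $\Lambda=UDU^\top$ with $D=\mathrm{diag}(\lambda_i)$. Let $w$ have i.i.d.\ coordinates with density $\cos^2(\pi w_i/2)$ on $[-1,1]$. Set $g=\nabla F(x^\star)+n^{-1/2}AUD^{1/2}w$. I use the functional van Trees bound with weight matrix $C=A^{-1}\Sigma=\Lambda A$:
\begin{align*}
\sup_g\EE\|\widehat x_n-\phi(g)\|_2^2\ \ge\ \frac{\big(\EE_\pi\,\trace{C\nabla\phi(g)}\big)^2}{\trace{C\,n\Sigma^{-1}C^\top}+\trace{C J_\pi C^\top}}.
\end{align*}
The two denominator terms are computed directly. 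First, $\trace{C n\Sigma^{-1}C^\top}=n\,\trace{A^{-1}\Sigma A^{-1}}=n\trace{\Lambda}$. Second, $C(AUD^{1/2})^{-\top}=UD^{1/2}$, which gives $\trace{CJ_\pi C^\top}=n\pi^2\trace{D}=n\pi^2\trace{\Lambda}$. The denominator is therefore $(1+\pi^2)\,n\trace{\Lambda}$.

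\textbf{Step 3 (main obstacle): the numerator and the support condition.} Both require controlling the non-constant Hessian, and this is where $n\ge 64\liph^2\trace{\Lambda}$ enters. Write $\nabla^2F(\phi(g))=A(I+E)$. Assumption~\ref{assum: non-quadratic} gives $\|E\|_2\le\liph\|\phi(g)-x^\star\|_2$.

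For the numerator, symmetry of the Hessian gives $A(I+E)^{-1}A^{-1}=(I+E^\top)^{-1}$. Hence $\trace{C\nabla\phi}=\trace{\Lambda(I+E^\top)^{-1}}$. Since $\Lambda\succeq0$, the inequality $|\trace{\Lambda M}|\le\trace{\Lambda}\|M\|_2$ then yields $\trace{C\nabla\phi}\ge(1-\|E\|_2/(1-\|E\|_2))\trace{\Lambda}$.

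For the support, on the prior's support $\|A^{-1}(g-\nabla F(x^\star))\|_2^2=\|UD^{1/2}w\|_2^2/n\le\trace{\Lambda}/n$. The plan is a continuity/bootstrap argument along the segment from $x^\star$. As long as $\|\phi(g)-x^\star\|_2\le 2\sqrt{\trace{\Lambda}/n}$, the assumption on $n$ gives $\|E\|_2\le 2\liph\sqrt{\trace{\Lambda}/n}\le 1/4$. Then $\|\phi(g)-x^\star\|_2\le\|(I+E)^{-1}\|_2\sqrt{\trace{\Lambda}/n}\le\tfrac43\sqrt{\trace{\Lambda}/n}$, which closes the bootstrap. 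So every prior instance lies in $\nclass$, and $\|E\|_2\le1/4$ throughout.

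\textbf{Step 4: combine.} With $\|E\|_2\le 1/4$, the numerator is at least $(\tfrac{2}{3}\trace{\Lambda})^2\ge\tfrac14\trace{\Lambda}^2$. Dividing by the denominator $(1+\pi^2)n\trace{\Lambda}$ gives the bound $\trace{\Lambda}/(4(\pi^2+1)n)$. This holds because a Bayes risk lower-bounds the supremum over $\nclass(n,F,\Sigma)$, and the bound applies to every estimator $\widehat x_n\in\widehat\cX_n$.
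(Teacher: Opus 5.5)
Your proposal is correct and follows essentially the same route as the paper. It uses the same linear-tilt family $f(x,z)=F(x)-\dotp{z,x}$ with $z\sim N(\lambda,\Sigma)$ and the same $\cos^2$ product prior: your $n^{-1/2}AUD^{1/2}w$ is exactly the paper's $n^{-1/2}\Sigma^{1/2}VZ$ with $A^{-1}\Sigma^{1/2}=U\Gamma V^\top$, since $\Lambda=U\Gamma^2U^\top$. It also uses the same constant weight $C=A^{-1}\Sigma$ in the Gill--Levit Bayesian Cram\'er--Rao bound and the same Lipschitz-Hessian control on the prior's support; your pointwise numerator factor $2/3$ is even slightly sharper than the paper's $1/2$.

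One small correction concerns your bootstrap. Since $\nabla F(x^\star)=0$, the relevant identity is
\[
\phi(g)-x^\star=\Bigl(\int_0^1\nabla^2F\bigl(x^\star+t(\phi(g)-x^\star)\bigr)\,dt\Bigr)^{-1}g,
\]
which involves the averaged Hessian rather than $\nabla^2F(\phi(g))$. The same Lipschitz estimate bounds the averaged Hessian's deviation from $A$ (after multiplying by $A^{-1}$) by $\frac{\liph}{2}\|\phi(g)-x^\star\|_2$. So the continuity argument, which mirrors the paper's Lemma~\ref{lem:small_noise_close_solution}, goes through unchanged.
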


We defer the proof to Appendix~\ref{sec:lower_bound_proof}.  Theorem~\ref{thm: mainlowerboundtheorem_intro} shows that the geometry-dependent quantity $\frac{ \trace{A^{-1} \Sigma A^{-1}}}{4(\pi^2 +1) n}$ characterizes the fundamental difficulty of the problem when $n \ge 64  \liph^2  \trace{A^{-1} \Sigma  A^{-1}}$. This quantity is of the same order as the squared radius of the neighborhood in the definition of $\nclass(n,F,\Sigma)$. Moreover, one can replace the condition
$$\|x^\star(F_{f,P}) - x^\star(F)\|_2  \le 2\cdot\sqrt{\frac{ \trace{A^{-1} \Sigma A^{-1}}}{n}} $$
in the definition of $\nclass(n, F, \Sigma)$ by 
$$
\|x^\star(F_{f,P}) - x^\star(F)\|_2  \le c_n,
$$
where $c_n \ge 0$ is any constant smaller than $2\cdot\sqrt{\frac{ \trace{A^{-1} \Sigma A^{-1}}}{n}}$. Upon doing so,~\eqref{eqn:two_regimes_non_quadratic} still holds with 	$\frac{ \trace{A^{-1} \Sigma A^{-1}}}{4(\pi^2 +1) n}$ replaced by $\Omega(c_n^2)$. Thus, for $c_n \leq 2\cdot\sqrt{\frac{ \trace{A^{-1} \Sigma A^{-1}}}{n}}$, no estimator can improve on the squared error of the trivial estimator, which always outputs $x^\star(F)$, by more than a universal constant factor.
It remains to explain whether the threshold $ 64 \cdot \liph^2 \cdot \trace{A^{-1} \Sigma  A^{-1}}$ is necessary for non-quadratic problems in which $L_H > 0$. To this end, we define a modified instance class
\begin{align*}
	\tilde \nclass (n, F, \Sigma, r) :=  \left\{ (f,P) \left| 
	\begin{aligned}
		&\text{ $\nabla^2 F_{f,P} \equiv \nabla^2 F$ and $ \|x^\star(F_{f,P}) - x^\star(F)\|_2 \le r$,} \\
		& \text{ $\nabla f(x^\star(F_{f,P}), z)$ has distribution  $N(0, \Sigma)$ when $z\sim P$}
	\end{aligned}
	\right.\right\}.
\end{align*}

In the next proposition, we construct a specific instance showing that if the sample size is not at the order of $\liph^2 \cdot \trace{A^{-1} \Sigma  A^{-1}}$, no estimator can obtain a guarantee that  depends solely on the local geometry of the problem. 

\begin{proposition}\label{prop:necessity_liph_intro}
Suppose that $d > 1$. For any parameters $L \ge 3\mu >0$, $\liph > 0$ there exists a function $F\colon  \RR^d \rightarrow \RR$ satisfying Assumption~\ref{assum: non-quadratic} with parameters $(\mu, L,\liph)$ and having Hessian $\nabla^2 F(x^\star(F)) = \frac{L+\mu}{2} I$ such that for any $n \le \frac{  \log 2 \cdot \liph^2  d}{2\cdot 144^2}$,  we have

\begin{align}\label{eqn:necessity_liph}
	\inf_{\widehat x_n \in \widehat \cX_n} \sup_{(f,P)\in \tilde  \nclass\left(n,F, \frac{(L-\mu)^2}{4} I, \frac{36 (L-\mu)}{\liph \mu}\right)} \EE_{z_i\overset{\mathrm{iid}}{\sim} P}[\|\widehat x_n(\{z_i\}_{i=1}^{n}, f) - x^\star(F_{f,P})\|_2 ^2] \ge \frac{9 (L-\mu)^2}{ \liph^2  \mu^2}.
\end{align}
\end{proposition}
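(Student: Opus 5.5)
The plan is to run Fano's method over a packing of minimizers spread around a sphere. The function $F$ will be chosen so that its gradient grows slowly at large radius. Then shifting the linear term by a small amount moves the minimizer far, while the data barely distinguish the shifts.

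\textbf{Construction of $F$.} Take $F$ radial, $F(x)=\phi(\|x\|_2)$, so that $x^\star(F)=0$ and $\nabla^2F(0)=\frac{L+\mu}{2}I$. Choose the radial curvature to decrease smoothly from $\frac{L+\mu}{2}$ to $\mu$ over a ball of radius $\rho_0\asymp \frac{L-\mu}{\liph\,\mu}$, and to equal exactly $\mu$ outside it. The transition should be slow enough that $\|A^{-1}(\nabla^2F(x)-\nabla^2F(x'))\|_2\le \liph\|x-x'\|_2$ with $A=\frac{L+\mu}{2}I$. The tangential curvature $\phi'(s)/s$ must also be checked to lie in $[\mu,L]$ and to be Lipschitz; the hypothesis $L\ge 3\mu$ leaves room for this. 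The point of the design is that for $\|x\|_2=r:=\frac{36(L-\mu)}{\liph\mu}$ we get
\[
\|\nabla F(x)\|_2=\phi'(r)\le \tfrac{L+\mu}{2}\rho_0+\mu r\lesssim \frac{L-\mu}{\liph}.
\]

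\textbf{Hypotheses.} Let $\{v_j\}_{j=1}^M$ be a $1/2$-separated packing of $\mathbb S^{d-1}$ with $\log M\ge c\,d\log 2$ (Gilbert--Varshamov). Put $x_j=r v_j$ and $c_j=-\nabla F(x_j)$. Instance $j$ uses $f_j(x,z)=F(x)+\langle c_j+z,x\rangle$ with $z\sim N(0,\Sigma)$ and $\Sigma=\frac{(L-\mu)^2}{4}I$. Then:
\begin{itemize}
\item $F_{f_j,P}=F+\langle c_j,\cdot\rangle$ has Hessian identical to $\nabla^2F$ and minimizer $x_j$ with $\|x_j\|_2=r$;
\item $\nabla f_j(x_j,z)=z\sim N(0,\Sigma)$.
\end{itemize}
So every instance lies in $\tilde\nclass(n,F,\Sigma,r)$. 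The minimizers are pairwise at distance at least $r/2=\frac{18(L-\mu)}{\liph\mu}$. Once the Fano error probability is at most $1/2$, the standard reduction gives risk at least $\frac12(r/4)^2=\frac{9(L-\mu)^2}{\liph^2\mu^2}\cdot\frac{1}{2}$. Matching the exact constant $\frac{9(L-\mu)^2}{\liph^2\mu^2}$ only requires tuning the packing separation or using a slightly larger $r$ within the class.

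\textbf{Information bound.} An estimator sees $(\{z_i\},f_j)$, which is equivalent to observing i.i.d. $N(c_j,\Sigma)$ gradients of the linear part. Handling this equivalence correctly is part of the modelling: $f$ is given to the estimator, so $f$ must be the same known map for all $j$ and the shift must sit inside $P$. I will therefore write $f(x,w)=F(x)+\langle w,x\rangle$ with $w\sim N(c_j,\Sigma)$ under $P_j$. Then
\[
\mathrm{D_{KL}}(P_j^{\otimes n}\|P_k^{\otimes n})=\frac{n\|c_j-c_k\|_2^2}{2\sigma^2}\le\frac{2n\cdot 4\phi'(r)^2}{(L-\mu)^2}\lesssim \frac{n}{\liph^2}.
\]
Fano then needs $\frac{n}{\liph^2}\cdot O(1)+\log2\le\frac12\log M$. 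This is exactly where the hypothesis $n\le\frac{\log 2\,\liph^2 d}{2\cdot144^2}$ enters, with $144$ absorbing the constant in $\phi'(r)\le 144(L-\mu)/\ldots$.

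\textbf{Main obstacle.} The delicate step is the explicit radial profile $\phi$. It must simultaneously satisfy $\mu$-strong convexity, $L$-smoothness, and the $A^{-1}$-scaled Hessian Lipschitz bound, including the tangential eigenvalue $\phi'(s)/s$. It must also keep $\phi'(r)$ of order $(L-\mu)/\liph$ rather than $L r$. I would first try a piecewise-linear radial curvature $\phi''$ (slope $\asymp\liph\frac{L+\mu}{2}$) and verify that the tangential term inherits Lipschitzness. If that fails, I would smooth the transition with a mollifier.
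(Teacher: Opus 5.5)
Your route is the paper's route. Both use a radial $F$ whose curvature flattens from $\frac{L+\mu}{2}$ to $\mu$, and a single known $f(x,w)=F(x)+\langle w,x\rangle$ with the tilt carried by a Gaussian location family; you were right that $f$ must not depend on $j$. Both finish with Fano.

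The one structural difference is the packing. You place the minimizers $x_j=rv_j$ on the outer sphere and set $c_j=-\nabla F(x_j)$. That makes class membership and separation immediate, moves the work into bounding $\phi'(r)$, and gives separation $r/2$, three times the paper's $r/6$. The paper instead packs the tilts in the annulus $\frac{12}{L_H}<\|\theta\|_2\le\frac{72}{L_H}$ (before rescaling), so its KL bound is immediate, and uses Lemma~\ref{lem:far_noise_far_sol} to transfer separation to the minimizers. The paper also makes your ``main obstacle'' explicit: $g''(t)=(1-L_H'^2t^2)^2$ for $|t|<1/L_H'=4/L_H$, linear beyond, plus $\frac{\tilde\mu}{2}\|x\|_2^2$, rescaled by $\frac{L-\mu}{2}$.

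As written, three steps are off.

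(i) Your transition radius $\rho_0\asymp\frac{L-\mu}{L_H\mu}$ is comparable to $r$. Then $\frac{L+\mu}{2}\rho_0\asymp\frac{(L+\mu)(L-\mu)}{L_H\mu}$, so your display $\phi'(r)\lesssim\frac{L-\mu}{L_H}$ fails by a factor $L/\mu$, and the KL becomes of order $nL^2/(L_H^2\mu^2)$. The transition must be as short as the Hessian-Lipschitz budget allows: $\rho_0\asymp\frac{L-\mu}{(L+\mu)L_H}\asymp\frac{1}{L_H}$. This is what your own ``slope $\asymp L_H\frac{L+\mu}{2}$'' remark implies, and it is what the paper does.

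Concretely, take $\phi''(s)=\frac{L+\mu}{2}-\frac{(L+\mu)L_H}{6}s$ on $[0,\rho_0]$ with $\rho_0=\frac{3(L-\mu)}{(L+\mu)L_H}$, and $\phi''\equiv\mu$ beyond. Write $\nabla^2F=\frac{\phi'(s)}{s}I+\bigl(\phi''(s)-\frac{\phi'(s)}{s}\bigr)\hat x\hat x^\top$. Crude triangle-inequality bounds then give derivative norm at most $2b$ on the ramp and $6c/s^2\le 3b$ beyond it, where $b=\frac{(L+\mu)L_H}{6}$ and $c=\phi'(\rho_0)-\mu\rho_0$. Both are at most $\frac{L+\mu}{2}L_H$, which is exactly what the scaled condition requires. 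Since $\phi'(s)/s$ is an average of $\phi''$, the bounds $[\mu,L]$ also hold. Finally $\phi'(r)=\mu r+\int_0^{\rho_0}(\phi''-\mu)\le 37.5\,\frac{L-\mu}{L_H}$.

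(ii) $r/4=\frac{9(L-\mu)}{L_H\mu}$, so $(r/4)^2=\frac{81(L-\mu)^2}{L_H^2\mu^2}$, not $\frac{9(L-\mu)^2}{L_H^2\mu^2}$. You therefore have a factor $9$ of slack and need only $1-\frac{\max\mathrm{KL}+\log 2}{\log M}\ge\frac19$. Enlarging $r$ was never available anyway, since $r$ is the radius that defines the class.

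(iii) The threshold has to be checked, not ``absorbed.'' Under $n\le\frac{\log 2\cdot L_H^2 d}{2\cdot 144^2}$, your bound $\mathrm{KL}\le\frac{8n\phi'(r)^2}{(L-\mu)^2}$ with $\phi'(r)\le 37.5\frac{L-\mu}{L_H}$ gives $\mathrm{KL}\le 0.19d$. You then need a $\frac12$-separated set of $M$ points on $\mathbb S^{d-1}$ with $\log M\ge\frac98(0.19d+\log 2)$. Gilbert--Varshamov on $\{\pm1\}^d/\sqrt d$ with Hamming distance at least $d/16$ gives $\log M\ge 0.45d$, which suffices for $d\ge 4$. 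The set $\{\pm e_i\}$ handles $d=2,3$. With these corrections your argument proves the stated bound.
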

We defer the proof to Appendix~\ref{sec:necessity_liph_proof}. Now, fix $\liph$ and $L$, and let $\mu \rightarrow 0$ in Proposition~\ref{prop:necessity_liph_intro}. In this regime, if the number of samples is less than 
$$\Omega(\liph^2 d) = \Omega\left( \liph^2 \trace{\underbrace{\nabla^2 F(x^\star(F))^{-1} \cdot \cov(\nabla f(x^\star(F), z)) \cdot \nabla^2 F(x^\star(F))^{-1}}_{A^{-1} \Sigma A^{-1}}} \right),$$
then the lower bound~\eqref{eqn:necessity_liph} applies and can be made arbitrarily large.
Note that as $\mu \rightarrow 0$, the Hessian $\nabla^2 F(x^\star(F))$ converges to $\frac{L}{2} I$ and the covariance of the gradient noise stabilizes at $\frac{L^2}{4} I$. 
In other words, while the local geometry of $F$ at the solution stabilizes, the lower bound~\eqref{eqn:necessity_liph} can diverge. Therefore, the $\liph^2 \cdot \trace{A^{-1} \Sigma  A^{-1}}$ threshold in Theorem~\ref{thm: mainlowerboundtheorem_intro} is necessary in general.

\medskip

To conclude, we compare the lower bound given by Theorem~\ref{thm: mainlowerboundtheorem_intro} with the upper bounds derived in Section~\ref{sec:better_algorithm}. First, note that for quadratic problems, $\liph = 0$, and Theorem~\ref{thm: mainlowerboundtheorem_intro} specializes to Proposition~\ref{prop: mainlowerboundtheorem_quadratic_intro}. As discussed in Section~\ref{sec:quadratic_objective}, we have that for any $n \ge 1$, the non-asymptotic local minimax lower bound is 
$$ \Omega \left(\frac{ \trace{A^{-1} \Sigma A^{-1}}}{n}\right)$$ 
and any first-order algorithm requires  $\Omega\left(\sqrt{\frac{L}{\mu}} + \frac{\zeta^2}{\mu^2}  \right)$ samples to achieve it.
By Theorem~\ref{thm:vr_quadratic_ac}, our  algorithm is thus non-asymptotically instance-optimal up to logarithmic factors. 

For general non-quadratic objectives, consider in conjunction Theorem~\ref{thm: mainlowerboundtheorem_intro}, Proposition~\ref{prop:necessity_liph_intro},   our discussion on the sample size requirement $\Omega\left(\frac{\zeta^2}{\mu^2}\right)$  in Section~\ref{sec:suboptimality}, and the classical oracle complexity lower bound for first-order methods~\cite{complexity}. These together imply that,  for any  $n \ge \Omega(\liph^2 \trace{A^{-1}\Sigma A^{-1}})$, the non-asymptotic local minimax lower bound is 
$$ \Omega \left(\frac{ \trace{A^{-1} \Sigma A^{-1}}}{n}\right),$$
and any reasonable algorithm requires $\Omega\left( \sqrt{\frac{L}{\mu}} + \frac{\zeta^2}{\mu^2} + \liph^2  \trace{A^{-1} \Sigma A^{-1}}\right)$ samples to achieve this rate. On the other hand, focusing on the sample complexity requirement of Theorem~\ref{thm:vr_nonquadratic}, we see that our algorithm requires 
$$
\tilde O\left(\sqrt{\frac{L}{\mu}} + \frac{\zeta^2 + d\sigma_2^2 }{\mu^2}  + \frac{ \liph^2 d \sigma_1^2}{\lambda_{\min}^2(A)} \right)
$$
samples to achieve the instance-optimal rate.
As noted following Corollary~\ref{cor:vr_nonquadratic}, the $d\sigma_2^2$ term in the numerator of $\frac{\zeta^2 + d\sigma_2^2}{\mu^2}$ arises from a proof artifact. Moreover,  $d\sigma_2^2$ is often of the same order as $\zeta^2$, suggesting that this term does not worsen the complexity order in certain scenarios. Considering the last term, note that the upper bound involves $\frac{ \liph^2 d \sigma_1^2}{\lambda_{\min}^2(A)}$, which may be larger than the corresponding lower bound term $\liph^2 \cdot \trace{A^{-1} \Sigma A^{-1}}$.  Obtaining a fully matched characterization in general problems is an interesting open problem.

\section{Discussion}

Our paper undertakes a non-asymptotic analysis of instance optimality in stochastic strongly convex and smooth optimization. While classical methods such as SAA and robust SA can be asymptotically optimal, we showed that they may still perform poorly at realistic sample sizes, even on simple quadratic problems. In particular, they fail to match a non-asymptotic local minimax lower bound that we developed in this work. To remedy this issue,  we introduced a framework based on a careful variance-reduction device that achieves the optimal instance-dependent statistical error up to logarithmic factors, and can be wrapped around an accelerated stochastic optimization subroutine. As a notable consequence, we obtained improved results for the generalization error of stochastic methods in linear regression, a problem that has seen extensive investigation in the past decade. Taken together, our results demonstrate that taking a non-asymptotic and instance-dependent perspective can yield robust algorithms that have strong theoretical guarantees as well as reliable practical performance.

Several directions remain open. On the technical side, it would be interesting to sharpen the remaining gaps between our upper and lower bounds in the general non-quadratic setting, and to determine whether the additional sample-size requirements in our analysis are fundamental or just proof artifacts. Beyond the i.i.d.\ setting studied here, an important next step is to extend the theory to dependent noise, where temporal dependence may substantially alter both the lower bounds and optimal algorithm design. Indeed, related investigations have been recently undertaken in several problems~\cite{mou2022optimal1, li2023accelerated,wu2025uncertainty,nakul2026multiscale}. Another natural direction is to generalize our framework to nonsmooth or constrained problems, where the relevant local geometry is more delicate, and the correct non-asymptotic notion of instance-optimality is still an active area of investigation~\cite{cheng2025geometrycomputationoptimalitystochastic}. More broadly, we hope this work helps motivate a general theory of finite-sample instance-optimality that bridges optimization complexity, statistical efficiency, and local problem geometry.

\subsection*{Acknowledgments}

This work was supported in part by NSF grant CCF-2107455, a Google Research Scholar award, and research awards/gifts from Adobe, Amazon and Mathworks. Liwei Jiang was also partially supported by ONR award N00014-22-1-215 and an ARC postdoctoral fellowship. We thank Katya Scheinberg for several helpful discussions.

\small
\bibliographystyle{plain}
\bibliography{bibliography}

\normalsize
\newpage

\begin{center}
\LARGE{\textbf{Appendix}}
\end{center}
\appendix
\section{Proofs deferred from  Section~\ref{sec:better_algorithm}}
In this appendix, we prove Lemma~\ref{lem:vr_key_non_qud}, Proposition~\ref{prop:convergence_SGD} and Proposition~\ref{prop:convergence_ASGD}.

\subsection{Proof of Lemma~\ref{lem:vr_key_non_qud}} \label{appendix:vr_nonquadratic}
Recall that $\underline{x}$ is the unique solution to the following equation: \begin{align*}
	\nabla F(\underline x) - (\nabla F(\tilde x) - \widehat \nabla f(\tilde x)) = 0,
\end{align*}
where $\widehat \nabla f(\tilde x)  = \frac{1}{n} \sum_{i=1}^{n} \nabla f(\tilde x,z_i)$. 	In addition, by the fundamental theorem of calculus, we have
\begin{align*}
	\nabla F(\underline{x}) = \underbrace{\int_{0}^{1} \nabla^2 F(x^\star + t(\underline{x} - x^\star)) dt}_{:= B(\underline{x})} \cdot (\underline{x} - x^\star).
\end{align*}

We denote $\frac{1}{n} \sum_{i=1}^{n} (\nabla f(\tilde x,z_i)  - \nabla F(\tilde x))$ by $\tilde \xi_n$ and $\frac{1}{n} \sum_{i=1}^{n} (\nabla f(x^\star,z_i)  - \nabla F(x^\star))$ by $\bar \xi_n$. We have
\begin{align}
\EE[\|\underline{x} - x^\star\|^2 \mid \tilde x] &= \EE[\|B(\underline{x})^{-1}\tilde \xi_n \|^2 \mid \tilde x ] \notag \\
&= \EE[\|B(\underline{x})^{-1} AA^{-1}\tilde \xi_n \|^21_{\|\underline{x} - x^\star\| \le  \frac{1}{2\liph}} \mid \tilde x ] +  \EE[\|B(\underline{x})^{-1}\tilde \xi_n \|^21_{\|\underline{x} - x^\star\|\ge \frac{1}{2\liph}} \mid \tilde x] \notag\\
& \le 4 \cdot \EE[\|  A^{-1}\tilde \xi_n \|^21_{\|\underline{x} - x^\star\| \le  \frac{1}{2\liph}} \mid \tilde x] +  \EE[\|B(\underline{x})^{-1}\tilde \xi_n \|^21_{\|\underline{x} - x^\star\|  \ge  \frac{1}{2\liph}} \mid \tilde x] \label{eqn: non_qua_eq1} \\
&\le 8\cdot \EE[\| A^{-1}\bar  \xi_n \| ^2 1_{\|\underline{x} - x^\star\| \le  \frac{1}{2\liph}}] + \frac{8 \zeta^2}{n  \lmin^2} \|\tilde x - x^\star\|^2 \notag \\
&\quad + \EE\left[\|B(\underline{x})^{-1} \tilde   \xi_n\|^2 1_{\|\underline{x} - x^\star\|  > \frac{1}{2\liph}}  \Big|\; \tilde x\right]   \label{eqn: non_qua_eq2}\\
& \le \frac{8 \cdot \EE[\|A^{-1} \nabla f(x^\star,z)\|^2]}{n} + \frac{8 \zeta^2}{n  \lmin^2}\|\tilde x - x^\star\| ^2  +  \EE\left[\|B(\underline{x})^{-1} \tilde   \xi_n\|^2 1_{\|A^{-1} \tilde  \xi_n\| > \frac{1}{4\liph}} \Big|\; \tilde x\right]\label{eqn: non_qua_eq3}\\
&\le  \frac{8\cdot \EE[\|A^{-1} \nabla f(x^\star,z)\|^2]}{n} + \frac{8\zeta^2}{n  \lmin^2}\|\tilde x - x^\star\|^2  +  \frac{1}{\mu^2}\EE\left[\|\tilde   \xi_n\|_*^2 1_{\|\tilde  \xi_n\|_* > \frac{\lmin}{4\liph}} \Big|\; \tilde x\right] \label{eqn: non_qua_eq4}
\end{align}
where the estimate~\eqref{eqn: non_qua_eq1} follows from Lemma~\ref{lem:optimal_trace_when_close}, the estimate~\eqref{eqn: non_qua_eq2} follows from Assumption~\ref{assum:vr_assum} and Lemma~\ref{lem:general_lambda_min},  the estimate~\eqref{eqn: non_qua_eq3} follows from  Lemma~\ref{lem:small_noise_close_solution}, and the estimate~\eqref{eqn: non_qua_eq4} follows from Lemma~\ref{lem:general_lambda_min}. Next, we bound the last term in~\eqref{eqn: non_qua_eq4}. Note that by Assumption~\ref{assum:light_tail_non_quadratic} and independence of samples, $ \tilde \xi_n$ is a sub-exponential vector with parameters $(\frac{\sigma_1^2 + \sigma_2^2\|\tilde x -x^\star\|^2}{n}, \frac{\sigma_1 + \sigma_2 \|\tilde x - x^\star\| }{n})$. By Lemma~\ref{lem:truncted_second_moment_light_tail} and Young's inequality, we have 
\begin{align*}
\frac{1}{\mu^2}\EE\left[\|\tilde   \xi_n\|_*^2 1_{\|\tilde  \xi_n\|_*  > \frac{\lmin}{4\liph}} \Big|\; \tilde x\right] 
&\le \frac{1}{\mu^2} \left(\frac{\lmin^2}{8\liph^2} + \frac{16(\sigma_1^2+\sigma_2^2 \|\tilde x - x^\star\| ^2)}{n}\right) e^{-\frac{n \cdot \lmin^2}{128 \liph^2  (\sigma_1^2 + \sigma_2^2\|\tilde x -x^\star\|^2)}+2d}\\
&\quad + \frac{1}{\mu^2}\left(\frac{\lmin^2}{4\liph^2} + \frac{96(\sigma_1+\sigma_2 \|\tilde x - x^\star\| )^2}{n^2}\right) e^{-\frac{n \cdot \lmin}{16 \liph  (\sigma_1+ \sigma_2\|\tilde x -x^\star\|)}+2d}.
\end{align*}
We now split the proof into two cases:

\paragraph{Case 1: $\|\tilde x - x^\star\|^2  \le \frac{\sigma_1^2}{\sigma_2^2}$.} First, by our assumption on $n$, it is straightforward to verify that
\begin{align}\label{eqn: bound_on_n}
\frac{n \cdot \lmin^2}{1024 \liph^2 \sigma_1^2} \ge   \max \left \{d, \log \left(\frac{\lmin^2}{2\mu^2 \liph^2 \EE[\|A^{-1} \nabla f(x^\star,z)\|^2]}\right),  \log n \right\}. 
\end{align}
Applying the $\|\tilde x -x^\star\|^2 \le \frac{\sigma_1^2}{\sigma_2^2}$ and~\eqref{eqn: bound_on_n}, we have
\begin{align*}
&\frac{1}{\mu^2} \left(\frac{\lmin^2}{8\liph^2} + \frac{16(\sigma_1^2 + \sigma_2^2\|\tilde x - x^\star\| ^2)}{n}\right) e^{-\frac{n \cdot\lmin^2}{128 \liph^2  (\sigma_1^2 + \sigma_2^2\|\tilde x -x^\star\|^2)}+2d}\\
&\qquad \le  \frac{\lmin^2}{4 \mu^2 \liph^2}  e^{-\frac{n \cdot \lmin^2}{512 \liph^2  \sigma_1^2} }\\
&\qquad \le  \frac{\lmin^2}{4 \mu^2 \liph^2}  e^{- \log \left(\frac{\lmin^2}{2\mu^2 \liph^2 \EE[\|A^{-1} \nabla f(x^\star,z)\|^2]}\right) - \log n}\\
&\qquad = \frac{\EE[\|A^{-1} \nabla f(x^\star,z)\|^2]}{2n}.
\end{align*}
Similarly, we can verify 
\begin{align*}
\frac{n \lmin^2}{1024 \liph^2 \sigma_1^2} \ge 1 \quad \text{and} \quad  \frac{n \lmin}{128 \liph \sigma_1} \ge \max\left\{d, \log \left(\frac{4\lmin^2}{\mu^2 \liph^2 \EE[\|A^{-1} \nabla f(x^\star,z)\|^2]}\right), \log n\right\}.
\end{align*}
and have
\begin{align*}
&\frac{1}{\mu^2}\left(\frac{\lmin^2}{4\liph^2} + \frac{96(\sigma_1+\sigma_2 \|\tilde x - x^\star\| )^2}{n^2}\right) e^{-\frac{n \cdot \lmin}{16 \liph  (\sigma_1+ \sigma_2\|\tilde x -x^\star\| )}+2d}\\
&\qquad \le  \frac{2\lmin^2}{\mu^2 \liph^2} e^{-\frac{n \cdot\lmin}{64 \liph \sigma_1}}\\
&\qquad \le  \frac{2\lmin^2}{ \mu^2 \liph^2}  e^{-\log \left(\frac{4\lmin^2}{\mu^2 \liph^2 \EE[\|A^{-1} \nabla f(x^\star,z)\|^2]}\right) - \log n}\\
&\qquad = \frac{\EE[\|A^{-1} \nabla f(x^\star,z)\|^2]}{2n}.	
\end{align*}

\paragraph{Case 2: $\|\tilde x -x^\star\|^2 >\frac{\sigma_1^2}{\sigma_2^2}$.} 
In this case, we have
\begin{align}\label{eqn:second_moment}
\frac{1}{\mu^2}\EE\left[\|\tilde   \xi_n\|_*^2 1_{\|\tilde  \xi_n\|_*  > \frac{\lmin}{4\liph}}\Big|\; \tilde x \right] & \le \frac{1}{\mu^2}\EE\left[\|\tilde   \xi_n\|_* ^2 \Big|\; \tilde x\right] \notag\\
&\le \frac{d \sigma_1^2 + d \sigma_2^2\|\tilde x - x^\star\|^2}{n\mu^2} \\
&\le \frac{2d\sigma_2^2 \|\tilde x - x^\star\|^2}{n\mu^2}, \notag
\end{align}
where the estimate~\eqref{eqn:second_moment} follows from Lemma~\ref{lem:second_moment_sub_exponential}.

\smallskip
Combining~\eqref{eqn: non_qua_eq4} and the two cases above, we have 
\begin{align*}
\EE[\|\underline{x} - x^\star\|^2 \mid \tilde x]  \le  \frac{9 \cdot \EE[\|A^{-1} \nabla f(x^\star,z)\|^2]}{n} + \frac{8 (\zeta^2 + d\sigma_2^2)}{n \mu^2}\|\tilde x - x^\star\|^2 ,
\end{align*}
as desired.
\qed

\subsection{Proof of  Proposition~\ref{prop:convergence_SGD}} \label{appendix:stochastic_opt_subroutine_SGD}

 Recall that $g(x,z)=  f(x,z) - \langle \nabla f(\tilde x, z) - \widehat \nabla f(\tilde x), x \rangle$ and $G(x) = F(x) - \langle \nabla F(\tilde x) -  \widehat \nabla f(\tilde x), x\rangle $.  Let $\delta_t = \nabla g(x_t,z_t) - \nabla G(x_t)$ denote the gradient noise at time $t$. Recall the definition of the $\sigma$-algebras $\{\mathcal{F}_t \}_{t \geq 0}$. By Assumption~\ref{assum:vr_assum}, we have
\begin{equation}\label{eqn:delta_t_var}
	\begin{aligned}
		\EE[\|\delta_t\|_*^2 \mid \cF_t] &= \EE[\| (\nabla f(x_t,z_t) - \nabla F(x_t)) - (\nabla f(\tilde x,z_t) - \nabla F(\tilde x) )\|_*^2 \mid \cF_t]\\
		&\le \zeta^2 \|x_t -\tilde x\|^2.
	\end{aligned}
\end{equation}
Note that $G$ is $L$-smooth. By convexity, the result~\cite[Lemma 2]{ghadimi2012optimal}, and the assumption that $\eta \le \frac{1}{2L}$, we have
\begin{align*}
G(x_{t+1}) &\le G(x_t) + \dotp{\nabla G(x_t), x_{t+1} - x_t} + \frac{L}{2} \|x_{t+1} - x_t\|^2\\
& = G(x_t) + \dotp{\nabla g(x_t,z_t), x_{t+1} - x_t} + \frac{1}{2\eta} \|x_t - x_{t+1}\|^2 -  \frac{1}{2\eta} \|x_t - x_{t+1}\|^2 + \frac{L}{2} \|x_{t+1} - x_t\|^2 - \dotp{\delta_t, x_{t+1} - x_t}\\
&\le G(x_t) +\dotp{\nabla g(x_t, z_t),\underline{x} - x_t} + \frac{1}{2\eta} \|x_t - \underline{x}\|^2  -\frac{1}{2\eta} \|x_{t+1} - \underline{x}\|^2 -  \frac{1}{4\eta} \|x_t - x_{t+1}\|^2   - \dotp{\delta_t, x_{t+1} - x_t}\\
&\le G(\underline{x}) +  \frac{1}{2\eta} \|x_t - \underline{x}\|^2  -\frac{1}{2\eta} \|x_{t+1} - \underline{x}\|^2 -  \frac{1}{4\eta} \|x_t - x_{t+1}\|^2   + \dotp{\delta_t, \underline{x} - x_{t+1}} \\
&\le G(\underline{x}) +  \frac{1}{2\eta} \|x_t - \underline{x}\|^2  -\frac{1}{2\eta} \|x_{t+1} - \underline{x}\|^2 -  \frac{1}{4\eta} \|x_t - x_{t+1}\|^2   + \dotp{\delta_t, \underline{x} - x_t} + \|\delta_t\|_* \|x_t - x_{t+1}\|. 
\end{align*}
Note that by Young's inequality,
\begin{align*}
\|\delta_t\|_* \|x_t - x_{t+1}\| \le \eta \|\delta_t\|_*^2 + \frac{ 1}{4 \eta} \|x_t - x_{t+1}\|^2. 
\end{align*}
Combining the two displays above with~\eqref{eqn:delta_t_var}, we have
\begin{align*}
\frac{\mu}{2} \EE[ \|x_{t+1} - \underline{x}\|^2 \mid \cF_t]  & \le \EE[G(x_{t+1}) - G(\underline{x}) \mid \cF_t]\\
&\le \frac{1}{2\eta} \|x_t - \underline{x}\|^2  -\frac{1}{2\eta} \EE[\|x_{t+1} - \underline{x}\|^2 \mid \cF_t]+ \eta \zeta^2 \|x_t - \tilde x\|^2\\
&\le\frac{1}{2\eta} \|x_t - \underline{x}\|^2  -\frac{1}{2\eta} \EE[\|x_{t+1} - \underline{x}\|^2 \mid \cF_t]+  2\eta \zeta^2 \|x_t - \underline{x} \|^2+ 2\eta \zeta^2 \|\tilde x - \underline{x} \|^2.
\end{align*}
Taking a further expectation and summing the inequality above from $t= 0$ to $T$, we have
\begin{align*}
\frac{\mu}{2}\sum_{t=1}^{T+1} \EE[ \|x_t - \underline{x}\|^2 \mid \cF_0] &\le  \frac{1}{2\eta} \|\tilde x - \underline{x}\|^2 + \frac{\mu}{4} \sum_{t=0}^{T} \EE[\|x_t -\underline{x}\|^2 \mid \cF_0] + 2(T +1) \eta\zeta^2 \|\tilde x - \underline{x}\|^2 \\
&\le \frac{1}{2\eta} \|\tilde x - \underline{x}\|^2 + \frac{\mu}{4} \| \tilde x - \underline{x}\|^2 + \frac{\mu}{4} \sum_{t=1}^{T+1} \EE[\|x_t -\underline{x}\|^2 \mid \cF_0] + \frac{(T+1) \mu \|\tilde x - \underline{x}\|^2}{128},
\end{align*}
where both inequalities follow from the assumption that $\eta \le \frac{\mu  }{256 \zeta^2}$. Rearranging, we obtain
\begin{align*}
\frac{1}{T+1} \sum_{t=1}^{T+1} \EE[\|x_t - \underline{x}\|^2 \mid \cF_0] &\le \left(\frac{2}{(T+1)\eta \mu} + \frac{1}{T+1} + \frac{1}{32}\right)\|\tilde x - \underline{x}\|^2 \\
&\le \frac{1}{16} \|\tilde x -\underline{x}\|^2.
\end{align*}
The result then follows from our choice of output $\frac{1}{T+1} \sum_{t=1}^{T+1} x_t$ and Jensen's inequality, since $\| \cdot \|^2$ is a convex function and so $\EE[\|\frac{1}{T+1} \sum_{t=1}^{T+1} x_t - \underline{x}\|^2 \mid \cF_0] \leq \frac{1}{T+1} \sum_{t=1}^{T+1} \EE[\|x_t - \underline{x}\|^2 \mid \cF_0]$.
\qed

\subsection{Proof of Proposition~\ref{prop:convergence_ASGD}} \label{appendix:stochastic_opt_subroutine_ASGD}
 Recall that $g(x,z)= f(x,z) - \langle \nabla f(\tilde x, z) - \widehat \nabla f(\tilde x), x \rangle$ and $G(x) = \EE_{z\sim P}[g(x,z)] = F(x) - \langle \nabla F(\tilde x) - \widehat \nabla f(\tilde x), x \rangle$.   Let $\tilde \delta_t = \frac{1}{m_t} \sum_{i=1}^{m_t} \nabla g(r_t, z_i^{(t)}) - \nabla G(r_t)$ denote the gradient noise at iteration $t$. Recall the definition of the $\sigma$-algebras $\{\widetilde{\mathcal{F}}_t \}_{t \geq 1}$. By Assumption~\ref{assum:vr_assum}, we have
\begin{equation}\label{eqn:delta_t_var_duplicate}
	\begin{aligned}
		\EE[\|\tilde \delta_t\|_*^2 \mid  \tilde \cF_t] &= \EE \left[\left\| \frac{1}{m_t}\sum_{i=1}^{m_t}\left[(\nabla f(r_t,z_i^{(t)}) - \nabla F(r_t)) - (\nabla f(\tilde x,z_i^{(t)}) - \nabla F(\tilde x)) \right]\right\|_*^2 \;\middle | \; \tilde \cF_t\right]\\
		&\le \frac{\zeta^2}{m_t} \|r_t -\tilde x\|^2.
	\end{aligned}
\end{equation}
Define $\Gamma_t = \frac{2}{t(t+1)}$. It is straightforward to verify that by our choice of $\alpha_t$ and $\gamma_t$, the following relations hold:
\begin{align}\label{eqn:relations}
\tilde \mu + \gamma_t - L \alpha_t^2  \ge \tilde \mu + \frac{\gamma_t}{2} , \quad \frac{\gamma_t}{\Gamma_t} \equiv 4L, \quad \text{and} \quad \Gamma_t = \begin{cases}
	1 & t =1 \\
	(1-\alpha_t) \Gamma_{t-1} & t \ge 2.
\end{cases}
\end{align}
Define $l_G(z,x) := G(z) + \dotp{\nabla G(z), x-z} + \frac{\tilde \mu}{2} \|z-x\|^2$ and $\Delta_t(x) := \alpha_t \dotp{\tilde \delta_t, x- x_{t-1}^+} + \frac{\alpha_t^2 \|\tilde \delta_t\|_*^2}{\tilde \mu + \gamma_t - L\alpha_t^2}$, where 
\begin{align*}
 x_{t-1}^+ = \frac{\alpha_t \tilde \mu}{\tilde \mu + \gamma _t} r_t + \frac{(1-\alpha_t)\tilde \mu + \gamma_t}{\tilde \mu + \gamma_t} x_{t-1}.
\end{align*}
By ~\cite[Proposition 5]{ghadimi2012optimal}, for any $x \in \RR^d$ and $t \ge 1$,  
\begin{align*}
G(y_t) + \frac{\tmu}{2} \|x_t - x\|^2 \le \Gamma_t \sum_{\tau = 1}^{t} \frac{\alpha_\tau}{\Gamma_\tau} l_G(r_{\tau}, x) + \Gamma_t \sum_{\tau=1}^{t} \frac{\gamma_\tau}{\Gamma_\tau} \left(\frac{\|x_{\tau-1}-  x\|^2}{2} - \frac{\|x_{\tau} - x\|^2}{2}\right) + \Gamma_t \sum_{\tau = 1}^t \frac{\Delta_{\tau} (x)}{\Gamma_\tau}.
\end{align*}
By the choice $\tilde \mu = \frac{\mu}{2}$ and since $G$ is $\mu$-strongly convex, we have 
\begin{align*}
l_G(r_{\tau},x) \le G(x) - \frac{\mu}{4}\|r_{\tau} - x\|^2. 
\end{align*}
Note also that by our choice of $\alpha_t$ and $\gamma_t$, we have $\frac{\gamma_\tau}{\Gamma_\tau}  = 4L$ and $\Gamma_t\sum_{\tau = 1}^{t} \frac{\alpha_\tau}{\Gamma_\tau} = 1$.
Therefore,  for any $x \in \RR^d$, 
\begin{align}\label{eqn:bound_from_lan}
G(y_t) \le G(x) + \frac{4L}{ t(t+1)}\|\tilde x - x\|^2   + \Gamma_t \sum_{\tau = 1}^t \left(\frac{\Delta_\tau(x)}{\Gamma_\tau} - \frac{\alpha_\tau \mu }{4\Gamma_\tau}  \|r_\tau - x\|^2 \right)
\end{align}
 Then, for any $x \in \RR^d$, we have
\begin{align}
\EE\left[\Delta_{\tau}(x) - \frac{\alpha_\tau \mu}{4} \|r_{\tau} - x\|^2 \mid \tilde \cF_\tau \right] &= \frac{\alpha_\tau^2}{\tilde \mu+\gamma_\tau - L\alpha_\tau^2} \EE[\|\tilde \delta_\tau\|_*^2 \mid \tilde \cF_\tau] - \frac{\alpha_\tau \mu}{4} \|r_\tau - x\|^2 \label{eqn:key_bound_eq1}\\
&\le \frac{2\alpha_\tau^2}{\gamma_\tau}  \frac{\zeta^2}{m_\tau} \|r_\tau - \tilde x\|^2  - \frac{\alpha_\tau \mu}{4}  \|r_\tau - x\|^2 \label{eqn:key_bound_eq2} \\
&\le \left(\frac{4\alpha_\tau^2}{\gamma_\tau}  \frac{\zeta^2}{m_\tau} - \frac{\alpha_\tau \mu}{4}  \right)\|r_\tau -  x\|^2 + \frac{4\alpha_\tau^2}{\gamma_\tau}  \frac{\zeta^2}{m_\tau}  \|x- \tilde x\|^2,  \label{eqn:key_bound_eq3}\\
&\le \frac{\mu}{128 (\tau +1) }  \|x- \tilde x\|^2  \label{eqn:key_bound_eq4},
\end{align}
where the equality~\eqref{eqn:key_bound_eq1} follows from the fact that $\alpha_\tau \dotp{ \tilde \delta_\tau, x - x_{\tau-1}^+}$ is a martingale difference sequence with respect to $\widetilde{\cF}_\tau$, the estimate~\eqref{eqn:key_bound_eq2}  follows from~\eqref{eqn:delta_t_var_duplicate} and~\eqref{eqn:relations}, the estimate~\eqref{eqn:key_bound_eq3} follows from Young's inequality, and the final bound~\eqref{eqn:key_bound_eq4} follows from our choice of parameters.
Substituting~\eqref{eqn:key_bound_eq4} into~\eqref{eqn:bound_from_lan}, applying the law of total expectation, and taking $x = \underline{x}$, we have for any $T \ge 1$ that
\begin{align*}
\EE[G(y_T) - \inf G \mid \tilde \cF_1] &\le \frac{4L}{ T(T+1)} \|\tilde  x - \underline{x}\|^2 + \Gamma_T \sum_{\tau =1}^{T} \frac{\mu}{128 (\tau +1) \Gamma_\tau} \|\tilde x - \underline{x}\|^2 \\
& \le  \frac{4L}{ T(T+1)} \|\tilde  x - \underline{x}\|^2 +  \frac{\mu}{64} \|\tilde x - \underline{x}\|^2 .
\end{align*}
When $T \ge 16\sqrt{\frac{L}{ \mu}}$, we have 
$$\EE[G(y_T) - \inf G \mid \tilde \cF_1]  \le \frac{\mu}{32} \|\tilde x - \underline{x}\|^2.$$

Note also that we have $G(y_T) - \inf G \ge \frac{\mu}{2} \|y_T - \underline{x}\|^2$ by strong convexity, and combining these yields
\begin{align*}
\EE[ \|y_T - \underline{x} \|^2 \mid \tilde \cF_1 ] \le \frac{1}{16} \| \tilde x - \underline{x}\|^2,
\end{align*}
as desired.
\qed

\section{Proofs deferred from Section~\ref{sec:overall_complexity}}

In this section, we prove Corollaries~\ref{cor:quadratic_complexity} and~\ref{cor:complexity_quad_ASGD}.
In both proofs, we ignore rounding issues for cleanliness -- the proof still holds when parameters that are supposed to be integers are given by the smallest integer greater than or equal to the given expression.

\subsection{Proof of Corollary~\ref{cor:quadratic_complexity}} \label{appendix:quadratic_complexity} 
Set the total number of epochs $K = \log_2\paren{\frac{n \|x_0-x^\star\|^2}{ \EE[\|A^{-1} \nabla f(x^\star, z)\|^2]}}$ and select algorithm parameters 
\begin{align*}
\eta =  \min\left\{\frac{1}{2L}, \frac{\mu }{256\zeta^2}\right\}, \quad T = \max\left\{\frac{256}{\eta\mu}, 64\right\} ,\quad N = \frac{n}{6}, \quad \text{and} \quad N_k = \max\left\{\frac{32\zeta^2}{\mu^2}, (\frac{3}{4})^{K+1-k} \cdot N \right\}.
\end{align*}
Evidently, the conditions of Theorem~\ref{thm:vr_quadratic} are satisfied. Consequently, we have
\begin{align*}
\EE[\|\widehat x_K - x^\star\|^2] &\le \frac{1}{2^K} \|\hat x_0 - x^\star\|^2 + \frac{20\cdot  \EE[\|A^{-1} \nabla f(x^\star, z)\|^2]}{N} \\
&= \frac{121\cdot  \EE[\|A^{-1} \nabla f(x^\star, z)\|^2]}{n},
\end{align*}
where the second line follows from our choice of $K$ and $N$. In addition, performing some algebra on our parameter choices yields
\begin{align*}
(T+1)K  \lesssim_{\log} \frac{ L}{\mu} + \frac{ \zeta^2}{\mu^2}  +1.
\end{align*}
In addition, 
\begin{align*}
\sum_{k=1}^{K} N_k \le  K\cdot \frac{32\zeta^2}{\mu^2} + N \cdot \sum_{k=1}^{K} \left(\frac{3}{4}\right)^{K-k+1} \le K\cdot \frac{32\zeta^2}{\mu^2}  + \frac{n}{2}.
\end{align*}
Therefore, the total number of samples used $(T+1)K + \sum_{k=1}^K N_k$ can be bounded by $n$ when 
$$n \gtrsim_{\log} \frac{L}{\mu} + \frac{\zeta^2}{\mu^2} + 1,$$
as claimed.
\qed

\subsection{Proof of Corollary~\ref{cor:complexity_quad_ASGD}}\label{appendix:complexity_quad_ASGD}
Suppose that we select the algorithm parameters as prescribed by Theorem~\ref{thm:vr_quadratic_ac}.  Set 	the total number of epochs $K = \log_2\paren{\frac{n \|x_0-x^\star\|^2}{ \EE[\|A^{-1} \nabla f(x^\star, z)\|^2]}}$ and choose  $N = \frac{n}{6}$ and $N_k = \max \left\{\frac{32\zeta^2}{\mu^2}, \left(\frac{3}{4}\right)^{K+1-k} \cdot N \right\}.$ 
By Theorem~\ref{thm:vr_quadratic_ac}, we have
\begin{align*}
	\EE[\|\widehat x_K - x^\star\|^2] &\le \frac{1}{2^K} \|x_0 - x^\star\|^2 + \frac{20\cdot  \EE[\|A^{-1} \nabla f(x^\star, z)\|^2]}{N} \\
	&= \frac{121\cdot  \EE[\|A^{-1} \nabla f(x^\star, z)\|^2]}{n},
\end{align*}
where the second line follows from our choice of $K$ and $N$.  Note that in each epoch $k$, ASGD requires a sample size of
\begin{align*}
\sum_{t=1}^T m_t  \le  \sum_{t=1}^{T+1}  \left(\frac{256\zeta^2 t}{\mu L} + 1 \right) \le \frac{256 \zeta^2  T^2}{\mu L} + 16 \sqrt{\frac{L}{ \mu}}.
\end{align*}
Therefore, 
\begin{align*}
K \sum_{t=1}^T m_t \lesssim_{\log} \sqrt{\frac{L}{\mu}} + \frac{\zeta^2}{\mu^2}.
\end{align*}
In addition, 
\begin{align*}
\sum_{k=1}^{K} N_k \le  K\cdot \frac{32\zeta^2}{\mu^2} + N \cdot \sum_{k=1}^{K} \left(\frac{3}{4}\right)^{K-k+1} \le K\cdot \frac{32\zeta^2}{\mu^2}  + \frac{n}{2}.
\end{align*}
Therefore, the total sample size $	K \sum_{t=1}^T m_t + \sum_{k=1}^K N_k$ can be bounded by $n$ when 
$$n \gtrsim_{\log} \sqrt{\frac{L}{\mu}} + \frac{\zeta^2}{\mu^2} + 1,$$
as claimed.
\qed

\section{Proofs deferred from Section~\ref{sec:lower_bound}}\label{sec:lower_bound_proof}
This section is organized as follows. We begin by presenting the general Bayesian Cram\'{e}r-Rao lower bound, and then apply this framework to the setting of stochastic optimization to derive the lower bound in Theorem~\ref{thm: mainlowerboundtheorem_intro}. We conclude with an application of Fano's method to prove the lower bound in Proposition~\ref{prop:necessity_liph_intro}.

\subsection{Bayesian Cram\'{e}r-Rao lower bounds for a general functional}\label{sec:lower_bound_proof_prep}

We begin by stating the following general version of the Bayesian Cram\'{e}r-Rao lower bound.

\begin{theorem}[Theorem 1 in  \cite{gill1995applications}]\label{thm: bayesianCR}
Let $\Theta \subset \RR^d$ denote a general parameter space, and let $\rho$ be a prior distribution with bounded support contained within $\Theta$. Let $\cT: \supp(\rho) \rightarrow \RR^p$ be a $C^1$-smooth map. Suppose the samples $ \{z_i\}_{i=1}^n$ are i.i.d. drawn from a distribution $P_\lambda$ parameterized by $\lambda \in \Theta$. Then, for any estimator $\widehat \cT_n$ based on the samples $\{z_i\}_{i=1}^{n}$ and any smooth matrix-valued function $C:\RR^d \rightarrow \RR^{p\times d}$, we have
\begin{align*}
	&\quad \EE_{\lambda \sim \rho} \EE_{\{z_i\}_{i=1}^{n} \sim P_\lambda^n} \|\widehat \cT_n(\{z_i\}_{i=1}^{n}) - \cT(\lambda)\|_2^2\\
	&\ge   \frac{\left(\int \trace{C(\lambda)  \nabla \cT(\lambda)} \rho(\lambda) d\lambda\right)^2}{n \int \trace{C(\lambda) I(\lambda)C(\lambda)^\top} \rho(\lambda)d\lambda + \int \|\nabla C(\lambda) + C(\lambda) \nabla \log \rho(\lambda) \|_2^2 \rho(\lambda) d\lambda}.
\end{align*}
\end{theorem}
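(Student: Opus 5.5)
The proof follows the van Trees / Gill--Levit route: a single application of Cauchy--Schwarz to a well-chosen "Bayesian score" vector, with an integration by parts in $\lambda$ producing the numerator. Throughout I take the standard regularity conditions implicit in \cite{gill1995applications}. These are that $\rho$ has a density that is $C^1$ on the interior of its support and vanishes on the boundary, and that $\lambda\mapsto p_\lambda(z)$ is differentiable with Fisher information $I(\lambda)=\EE_{z\sim P_\lambda}[\nabla_\lambda \log p_\lambda(z)\nabla_\lambda \log p_\lambda(z)^\top]$. I also assume differentiation under the integral sign is permitted. The term $\nabla C(\lambda)$ in the denominator is read as the row-wise divergence, i.e.\ the vector whose $k$-th entry is $\sum_{j}\partial_{\lambda_j} C_{kj}(\lambda)$.

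\textbf{Step 1 (score vector).} Write $\mathbf z=\{z_i\}_{i=1}^n$ and $p^n_\lambda(\mathbf z)=\prod_i p_\lambda(z_i)$. Define the $\RR^p$-valued random vector
\[
\Psi(\lambda,\mathbf z)=\frac{1}{p^n_\lambda(\mathbf z)\rho(\lambda)}\sum_{j=1}^d \partial_{\lambda_j}\bigl(C_{\cdot j}(\lambda)\,p^n_\lambda(\mathbf z)\rho(\lambda)\bigr)
= C(\lambda)\nabla_\lambda\log p^n_\lambda(\mathbf z) + \nabla C(\lambda)+C(\lambda)\nabla\log\rho(\lambda).
\]

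\textbf{Step 2 (numerator via integration by parts).} I would compute
\[
\EE\bigl[\dotp{\widehat\cT_n(\mathbf z)-\cT(\lambda),\Psi}\bigr]=\int\!\!\int \Bigl\langle \widehat\cT_n(\mathbf z)-\cT(\lambda),\ \sum_j\partial_{\lambda_j}\bigl(C_{\cdot j}\,p^n_\lambda\rho\bigr)\Bigr\rangle\, d\lambda\, d\mathbf z .
\]
For fixed $\mathbf z$, integrate by parts in $\lambda$. The boundary terms vanish because $\rho$ vanishes on $\partial\,\supp(\rho)$. The estimator $\widehat\cT_n(\mathbf z)$ does not depend on $\lambda$, so only $-\partial_{\lambda_j}\cT(\lambda)$ survives. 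This leaves
\[
\int\!\!\int \sum_j \dotp{\partial_{\lambda_j}\cT(\lambda), C_{\cdot j}(\lambda)}\, p^n_\lambda\rho
=\int \trace{C(\lambda)\nabla\cT(\lambda)}\rho(\lambda)\,d\lambda,
\]
using $\int p^n_\lambda(\mathbf z)\, d\mathbf z=1$.

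\textbf{Step 3 (Cauchy--Schwarz and the second moment of $\Psi$).} Cauchy--Schwarz gives
\[
\bigl(\textstyle\int \trace{C\nabla\cT}\rho\bigr)^2\le \EE\|\widehat\cT_n-\cT(\lambda)\|_2^2\cdot\EE\|\Psi\|_2^2 .
\]
It remains to evaluate $\EE\|\Psi\|_2^2$. Conditionally on $\lambda$, the term $C(\lambda)\nabla\log p^n_\lambda(\mathbf z)$ has mean zero, since the score has mean zero. The remaining terms are $\lambda$-measurable, so the cross term vanishes. By independence of the samples, $\EE[\nabla\log p^n_\lambda\,\nabla\log p^{n\top}_\lambda\mid\lambda]=nI(\lambda)$. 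Hence
\[
\EE\|\Psi\|_2^2=n\int\trace{C I C^\top}\rho+\int\|\nabla C+C\nabla\log\rho\|_2^2\rho .
\]
Dividing the Cauchy--Schwarz inequality by this quantity gives the claim.

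The main obstacle is justifying the analytic steps rather than the algebra. Specifically, one must show that the boundary terms in the integration by parts vanish and that derivatives may be exchanged with the $\mathbf z$-integral. One must also handle the case $\EE\|\widehat\cT_n-\cT\|^2=\infty$, where the bound is trivial. I would first try to reduce everything to the assumptions that $\rho$ is $C^1$ on its compact support and vanishes on the boundary, and that the model is differentiable in quadratic mean. These are exactly the hypotheses under which \cite{gill1995applications} state the result, and they hold for the Gaussian-location priors used later in the paper.
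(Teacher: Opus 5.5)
Your proof is correct and is precisely the van Trees/Gill--Levit argument: integrate by parts in $\lambda$ against $C_{\cdot j}\,p^n_\lambda\rho$, then apply Cauchy--Schwarz, with the score cross-term vanishing. The paper does not reprove this result but simply cites it from \cite{gill1995applications}, and the regularity conditions you make explicit (prior vanishing on the boundary of its support, a regular model, differentiation under the integral) are exactly those the stated version leaves implicit. One small fix: your Step~2 actually yields $\sum_{i,j}C_{ij}\,\partial_{\lambda_j}\cT_i=\trace{C(\lambda)\nabla\cT(\lambda)^\top}$, not $\trace{C(\lambda)\nabla\cT(\lambda)}$ (the latter product is not even defined when $p\neq d$); the transposed form is the one the paper actually uses in the proof of Theorem~\ref{thm: lowerboundtheoremgeneral}, so the untransposed trace in the statement should be read as a typo.
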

Now suppose that we have a map $\cT:\RR^d \rightarrow \RR^d$ and a fixed symmetric positive definite matrix $\Sigma$.  Assume that the sample distribution $P_\lambda=N(\lambda, \Sigma)$, where $\lambda$ is unknown and we want to estimate $\cT(\lambda)$ using samples $\{z_i\}_{i=1}^{n}$. We will choose a specific function $C:\RR^d \rightarrow \RR^{p\times d}$ and prior distribution $\rho$ such that the Bayesian lower bound in Theorem~\ref{thm: bayesianCR} has a simpler form.

We consider the following one-dimensional density function borrowed from Section 2.7 of~\cite{tsybakov2008nonparametric}. Let $\mu(t): = \cos^2\left(\frac{\pi t}{2}\right) \cdot 1_{[-1,1]}$, and denote by $\mu^{\otimes d}$ the $d$-fold product measure of $\mu$. Let $Z$ denote a random vector drawn from $\mu^{\otimes d}$. Let $Q$ be any fixed orthogonal matrix, and we assign a prior distribution to $\lambda$ by letting
\begin{align}\label{eqn:prior}
\lambda =  \frac{1}{\sqrt{n}} \Sigma^{1/2} Q Z.
\end{align} 
We denote the density function of $\lambda$ by $\rho$. Our prior differs from the work~\cite{mou2023optimal} in that we have an extra orthogonal matrix $Q$, and this flexibility allows us to prove a tighter lower bound than existing results. 

We proceed with a lower bound for a general functional $\cT$, placing the following regularity condition.
\begin{assumption}\label{assum: regularityofT}
The map $\cT: \RR^d \rightarrow \RR^d$ is bijective and $C^1$ continuous. We denote the Jacobian of $\cT$ by $\nabla \cT$.  We assume that $\nabla \cT(0)$ is invertible.
\end{assumption}
Below is our main theorem of this subsection.
\begin{theorem}\label{thm: lowerboundtheoremgeneral}
Suppose that the map $\cT:\RR^d \rightarrow \RR^d$ satisfies Assumption~\ref{assum: regularityofT}. Let $\widehat \cX_n$ be the set of estimators based on $n$ samples, i.e., each $\widehat x_n \in \widehat \cX_n$ is a measurable map from $(\RR^d)^{\otimes n}$ to $\RR^d$.  Fix an orthogonal matrix $Q$ and let $\rho$ denote the density of $\lambda$ defined in~\eqref{eqn:prior}. For any $n$  large enough so that 
\begin{align*}
	\EE_{\rho}\left[\norm{\nabla \cT(0)^{-\top} (\nabla \cT(\lambda) - \nabla \cT(0))}_2\right] \le \frac{1}{2},
\end{align*} 
we have
\begin{align}\label{eqn: lowerboundforestglambdageneral}
	\inf_{\widehat x_n \in \widehat \cX_n}\EE_{\lambda \sim \rho} \EE_{z_i \overset{\mathrm{iid}}{\sim} N(\lambda,\Sigma)} \|\widehat x_n (\{z_i\}_{i=1}^{n}) - \cT(\lambda)\|_2^2 \ge  \frac{ \trace{\nabla \cT(0) \Sigma \nabla \cT(0)^{\top}}}{4(\pi^2 +1) n}.
\end{align}
\end{theorem}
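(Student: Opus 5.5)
We will apply the Bayesian Cram\'er--Rao bound (Theorem~\ref{thm: bayesianCR}) with $p=d$, prior $\rho$ from~\eqref{eqn:prior}, and a \emph{constant} weight matrix $C(\lambda)\equiv C:=\nabla\cT(0)\,\Sigma$ (up to a scalar to be tuned at the end). For $P_\lambda=N(\lambda,\Sigma)$ the Fisher information is $I(\lambda)=\Sigma^{-1}$, so the first denominator term is $n\,\trace{C\Sigma^{-1}C^\top}=n\,\trace{\nabla\cT(0)\Sigma\nabla\cT(0)^\top}$. Since $C$ is constant, $\nabla C=0$, and the second denominator term reduces to $\int\|C\nabla\log\rho(\lambda)\|_2^2\rho(\lambda)\,d\lambda$, i.e.\ $C$ times the Fisher information of the prior.

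We compute the prior term next. With $\lambda=n^{-1/2}\Sigma^{1/2}QZ$, change of variables gives $\nabla\log\rho(\lambda)=\sqrt n\,\Sigma^{-1/2}Q\,\nabla\log\mu^{\otimes d}(Z)$. Hence $C\nabla\log\rho=\sqrt n\,\nabla\cT(0)\Sigma^{1/2}Q\,\nabla\log\mu^{\otimes d}(Z)$. The coordinates of $\nabla\log\mu^{\otimes d}(Z)$ are independent, mean zero, and each has second moment $\int(\mu')^2/\mu=\pi^2$ (the standard computation for the $\cos^2$ density). So $\EE\|C\nabla\log\rho\|_2^2=\pi^2 n\,\trace{\nabla\cT(0)\Sigma\nabla\cT(0)^\top}$, and the full denominator is $(1+\pi^2)\,n\,\trace{\nabla\cT(0)\Sigma\nabla\cT(0)^\top}$. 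Because $Q$ is orthogonal it drops out here, so any $Q$ works.

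For the numerator, we write
\[
\trace{C\nabla\cT(\lambda)}=\trace{\Sigma\nabla\cT(0)^\top\nabla\cT(0)}+\trace{\Sigma\nabla\cT(0)^\top(\nabla\cT(\lambda)-\nabla\cT(0))}.
\]
The first piece equals $\trace{\nabla\cT(0)\Sigma\nabla\cT(0)^\top}=:\tau$. The error piece we rewrite as $\trace{M\,\nabla\cT(0)^{-\top}(\nabla\cT(\lambda)-\nabla\cT(0))}$ with $M=\nabla\cT(0)\Sigma\nabla\cT(0)^\top\succeq0$. Applying $|\trace{MB}|\le\|B\|_2\,\trace{M}$ (von Neumann, since $M$ is PSD), its absolute value is at most $\tau\,\|\nabla\cT(0)^{-\top}(\nabla\cT(\lambda)-\nabla\cT(0))\|_2$. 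Integrating against $\rho$ and using the hypothesis gives numerator $\ge(\tau-\tau/2)^2=\tau^2/4$. Dividing yields $\tau/(4(\pi^2+1)n)$, which is~\eqref{eqn: lowerboundforestglambdageneral}.

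The main obstacle is verifying the regularity needed to invoke Theorem~\ref{thm: bayesianCR}. The prior has compact support, and $\rho$ vanishes smoothly on the boundary, so integration by parts is valid and the prior score is square integrable; $\cT$ is $C^1$ by Assumption~\ref{assum: regularityofT}. A secondary check is the trace inequality, where the transposes must be placed so that the hypothesis applies with $\nabla\cT(0)^{-\top}$ on the left. The result is stated for every estimator, so taking the infimum finishes the proof.
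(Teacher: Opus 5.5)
Your route is the paper's own: constant $C=\nabla\cT(0)\Sigma$, $I(\lambda)=\Sigma^{-1}$, prior information $n\pi^2\Sigma^{-1}$ (your denominator $(1+\pi^2)n\tau$ is correct), then a trace bound on the numerator. The gap sits exactly at the transposition you called a secondary check. The van Trees numerator is $\trace{C\nabla\cT(\lambda)^\top}=\sum_{i,j}C_{ij}\partial_j\cT_i(\lambda)$; Theorem~\ref{thm: bayesianCR} as quoted drops the transpose. Your right-hand side $\trace{\Sigma\nabla\cT(0)^\top\nabla\cT(\lambda)}$ is indeed this quantity. Now write $A:=\nabla\cT(0)$ and $D:=\nabla\cT(\lambda)-A$. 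Your error piece is $\trace{\Sigma A^\top D}=\trace{A\Sigma D^\top}=\trace{MA^{-\top}D^\top}$, whereas $\trace{MA^{-\top}D}=\trace{A\Sigma D}$. These agree only in special cases, e.g.\ $D$ symmetric. Von Neumann therefore bounds the error by $\tau\norm{DA^{-1}}_2$, which is not the quantity $\norm{A^{-\top}D}_2$ in the hypothesis. The paper's Lemma~\ref{lem: tracelb} contains the identical slip.

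This is not cosmetic: as stated, the theorem fails for non-symmetric Jacobians. Take $d=2$, $A=\begin{bmatrix}0&4\\1&0\end{bmatrix}$, $\widetilde A=\begin{bmatrix}0&15/4\\ \delta&0\end{bmatrix}$ with small $\delta>0$, and $\Sigma=\mathrm{diag}(1+\epsilon,\epsilon)$. Then $A^{-\top}(\widetilde A-A)=\mathrm{diag}(\frac{\delta-1}{4},-\frac14)$ has norm $\frac14$. By contrast, $(\widetilde A-A)A^{-1}=\mathrm{diag}(-\frac1{16},\delta-1)$ has norm near $1$, and $\trace{A\Sigma\widetilde A^\top}=15\epsilon+(1+\epsilon)\delta$ is tiny compared with $\tau=1+17\epsilon$. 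Let $\cT=\widetilde A\circ h$, where $h$ is a diffeomorphism of $\RR^2$ with $h(0)=0$, $\nabla h(0)=\widetilde A^{-1}A$ (possible since this matrix has positive determinant), $h=\mathrm{id}$ off $B_r(0)$, and $\nabla h$ bounded uniformly in $r$. Then Assumption~\ref{assum: regularityofT} holds, $\nabla\cT(0)=A$, and $\nabla\cT\equiv\widetilde A$ off $B_r(0)$. Since $\rho(B_r(0))\to0$, the hypothesis holds for small $r$. Yet the estimator $\widehat x_n\equiv0$ has Bayes risk at most $\frac1n(\frac13-\frac{2}{\pi^2})\trace{\widetilde A\Sigma\widetilde A^\top}+\norm{\widetilde A}_2^2r^2$, with $\trace{\widetilde A\Sigma\widetilde A^\top}=\frac{225}{16}\epsilon+(1+\epsilon)\delta^2$. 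This is far below $\tau/(4(\pi^2+1)n)$ for small $\epsilon,\delta,r$. The fix is to assume $\EE_\rho[\norm{(\nabla\cT(\lambda)-\nabla\cT(0))\nabla\cT(0)^{-1}}_2]\le\frac12$ (or symmetric Jacobians), after which your argument goes through verbatim. In the paper's application $\nabla\cT(\lambda)=\nabla^2F(\cT(\lambda))^{-1}$ is symmetric, so the two conditions coincide there and Theorem~\ref{thm: mainlowerboundtheorem_intro} is unaffected.
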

\begin{proof}
To apply Theorem~\ref{thm: bayesianCR}, we use the following constant map $C$:
$$
C(\lambda) = \nabla T(0) \cdot I(\lambda)^{-1} = \nabla \cT(0) \Sigma,
$$
where the second equality follows from Lemma~\ref{lem: Fisherinfo:lambda}. 
By Lemma~\ref{lem: tracelb} and our assumption on $n$, we have 
\begin{align}
	\EE_\rho\left[\trace{C(\lambda) \nabla  \cT(\lambda)^\top}\right] &= \EE_\rho[\trace{\nabla \cT(0) \Sigma \nabla \cT(\lambda)^{\top}}] \notag \\
	&\ge  \frac{1}{2} \trace{\nabla \cT(0) \Sigma \nabla \cT(0)^\top} \label{eqn: piece1general}
\end{align}

On the other hand, by Lemma~\ref{lem: Fisherinfo:lambda},
\begin{align}
	\EE_\rho\left[\trace{C(\lambda) I(\lambda) C(\lambda)^\top}\right] =  \trace{\nabla \cT(0) \Sigma \nabla \cT(0)^{\top}}. \label{eqn: piece2general}
\end{align}

Additionally, by Lemma~\ref{lem: nablalog}, we have
\begin{align}
	\EE_\rho \|\nabla C(\lambda) + C(\lambda) \nabla \log(\rho(\lambda))\|_2^2 &= \trace{C(\lambda) \EE_\rho \left[\nabla \log(\rho(\lambda)) \nabla \log(\rho(\lambda))^\top\right] C(\lambda)^\top}\notag \\
	&= n\pi^2\trace{\nabla \cT(0) \Sigma \nabla \cT(0)^{\top}}. \label{eqn: piece3general}
\end{align} 

Applying Theorem~\ref{thm: bayesianCR} with equations \eqref{eqn: piece1general},\eqref{eqn: piece2general},\eqref{eqn: piece3general}, for any $\widehat x_n \in \widehat \cX_n$, we have 
\begin{align}
	\EE_{\lambda \sim \rho} \EE_{z_i \overset{\mathrm{iid}}{\sim} N(\lambda,\Sigma)} \|\widehat x_n (\{z_i\}_{i=1}^{n}) - \cT(\lambda)\|_2^2 \ge  \frac{ \trace{\nabla \cT(0) \Sigma \nabla \cT(0)^{\top}}}{4(\pi^2 +1) n},
\end{align}
as desired. 
\end{proof}
\begin{remark}\label{remark: linearcasetheorem}
We note that if $\cT$ is a linear map, the conclusion of Theorem~\ref{thm: lowerboundtheoremgeneral} holds for any $n \ge 1$ because of Remark~\ref{remark: linearcaselemma}.
\end{remark}
The rest of this subsection consists of supporting lemmas for Theorem~\ref{thm: lowerboundtheoremgeneral}.
\begin{lemma}\label{lem: nablalog}
Let $\rho:\RR^d \rightarrow \RR_+$ denote the density of $\lambda$ defined in~\eqref{eqn:prior}. Then 
$$
\EE[\nabla \log \rho(\lambda) \nabla \log \rho(\lambda)^\top ] = n\pi^2 \Sigma^{-1}.
$$
\end{lemma}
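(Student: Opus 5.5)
The approach is a change of variables from the product density of $Z\sim\mu^{\otimes d}$ to the density of $\lambda = n^{-1/2}\Sigma^{1/2}QZ$, followed by a coordinate-wise computation of the Fisher information of $\mu$ (the location score variance). Write $M := n^{-1/2}\Sigma^{1/2}Q$, which is invertible since $\Sigma \succ 0$ and $Q$ is orthogonal. Then
\[
\rho(\lambda) = \abs{\det M}^{-1}\prod_{j=1}^d \mu\big((M^{-1}\lambda)_j\big).
\]
The chain rule gives $\nabla \log\rho(\lambda) = M^{-\top} s(Z)$, where $Z = M^{-1}\lambda$ and $s(Z)$ is the vector with entries $s_j = (\log\mu)'(Z_j)$. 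Consequently
\[
\EE[\nabla\log\rho\,\nabla\log\rho^\top] = M^{-\top}\,\EE[s(Z)s(Z)^\top]\,M^{-1}.
\]

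The next step is to show $\EE[s(Z)s(Z)^\top] = \pi^2 I$. The off-diagonal entries vanish by independence of the coordinates together with $\EE[(\log\mu)'(Z_j)] = \int_{-1}^1 \mu'(t)\,dt = \mu(1)-\mu(-1) = 0$. For the diagonal entries, on $(-1,1)$ we have $(\log\mu)'(t) = -\pi\tan(\pi t/2)$. Therefore
\[
\EE[s_j^2] = \int_{-1}^1 \pi^2\tan^2\!\Big(\frac{\pi t}{2}\Big)\cos^2\!\Big(\frac{\pi t}{2}\Big)\,dt = \pi^2\int_{-1}^1 \sin^2\!\Big(\frac{\pi t}{2}\Big)\,dt = \pi^2 .
\]
This is the standard computation $I(\mu)=\pi^2$ from Tsybakov, Section 2.7. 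The only subtlety is that the score is unbounded near $\pm 1$. This is harmless because $\mu\,(\log\mu)'^2 = \mu'^2/\mu$ stays bounded there, so the integral is finite. The boundary terms also vanish since $\mu(\pm1)=0$, which justifies treating $\rho$ as a genuinely smooth-enough density on its support.

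Finally, substitute $M$:
\[
M^{-\top}(\pi^2 I)M^{-1} = \pi^2\,(MM^\top)^{-1} = \pi^2\big(n^{-1}\Sigma^{1/2}QQ^\top\Sigma^{1/2}\big)^{-1} = n\pi^2\Sigma^{-1}.
\]
Here $QQ^\top = I$, which shows the orthogonal matrix $Q$ drops out, as needed for the claim. The main obstacle is minor: carefully justifying the score computation at the support boundary, where $\mu$ vanishes. Everything else is linear algebra.
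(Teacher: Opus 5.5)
Your proof is correct and follows the same route as the paper. Like the paper, you use the change of variables $\rho(\lambda)=\abs{\det M}^{-1}\mu^{\otimes d}(M^{-1}\lambda)$ with $M=n^{-1/2}\Sigma^{1/2}Q$, the chain rule for the score, and the identity $M^{-\top}M^{-1}=n\Sigma^{-1}$ in which $Q$ cancels. The only difference is that you explicitly verify $\EE[\nabla\log\mu^{\otimes d}(Z)\nabla\log\mu^{\otimes d}(Z)^\top]=\pi^2 I$ (zero off-diagonal entries by independence and $\mu(\pm 1)=0$, diagonal entries $\pi^2\int_{-1}^1\sin^2(\pi t/2)\,dt=\pi^2$), which the paper simply asserts.
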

\begin{proof}
By a change of variables, we have 
\begin{align*}
	\rho(\lambda) = n^{d/2} \det(  \Sigma^{-1/2}) \mu^{\otimes d}(\sqrt{n} Q^{T} \Sigma^{-1/2} \lambda).
\end{align*}
Therefore,
\begin{align*}
	\EE[\nabla \log \rho(\lambda) \nabla \log \rho(\lambda)^\top ] &= \int \nabla \log \rho(\lambda) \paren{\nabla \log \rho(\lambda)}^\top  \rho(\lambda) d\lambda \\
	&= \int \nabla_\lambda \log \mu^{\otimes d}(\sqrt{n} Q^{T} \Sigma^{-1/2} \lambda) \paren{ \nabla_\lambda \log \mu^{\otimes d}(\sqrt{n} Q^{T} \Sigma^{-1/2} \lambda)}^{\top}  \rho(\lambda) d\lambda \\
	&= \int \sqrt{n} (Q^{\top} \Sigma^{-1/2})^\top \nabla \log \mu^{\otimes d}(z)\paren{ \nabla \log \mu^{\otimes d}(z)} ^\top  \sqrt{n} Q^{\top} \Sigma^{-1/2}  \mu^{\otimes d}(z) dz \\
	&= n \Sigma^{-1/2} Q \cdot \EE[\nabla \log \mu^{\otimes d}(Z) \paren{\nabla \log \mu^{\otimes d}(Z)}^\top] Q^\top \Sigma^{-1/2}\\
	&= n\pi^2 \Sigma^{-1},
\end{align*}
where the last equality follows from  $ \EE[\nabla \log \mu^{\otimes d}(Z) \paren{\nabla \log \mu^{\otimes d}(Z)}^\top] = \pi^2 I$.
\end{proof}
\begin{lemma}\label{lem: Fisherinfo:lambda}
The Fisher information matrix of the observation model is given by
\begin{align*}
	I(\lambda) = \Sigma^{-1}.
\end{align*}
\end{lemma}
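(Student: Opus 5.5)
The claim is that for the Gaussian location model $z\sim N(\lambda,\Sigma)$ with $\Sigma\succ 0$ fixed, the Fisher information is $I(\lambda)=\Sigma^{-1}$. The plan is to compute it directly from the definition $I(\lambda)=\EE_{z\sim P_\lambda}[\nabla_\lambda \log p_\lambda(z)\,\nabla_\lambda \log p_\lambda(z)^\top]$, where $p_\lambda$ is the density of $P_\lambda$ with respect to Lebesgue measure. This is also the normalization implicitly used in Theorem~\ref{thm: bayesianCR}, where the data term is $n\,I(\lambda)$ for $n$ i.i.d. samples.

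\emph{Step 1: the score.} Write the log density
\begin{align*}
\log p_\lambda(z) = -\tfrac{d}{2}\log(2\pi) - \tfrac12\log\det\Sigma - \tfrac12 (z-\lambda)^\top \Sigma^{-1}(z-\lambda).
\end{align*}
The first two terms do not depend on $\lambda$. Differentiating the quadratic form gives the score
\begin{align*}
\nabla_\lambda \log p_\lambda(z)=\Sigma^{-1}(z-\lambda).
\end{align*}

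\emph{Step 2: the second moment.} Since $z-\lambda\sim N(0,\Sigma)$ under $P_\lambda$,
\begin{align*}
\EE[\Sigma^{-1}(z-\lambda)(z-\lambda)^\top\Sigma^{-1}]=\Sigma^{-1}\Sigma\Sigma^{-1}=\Sigma^{-1}.
\end{align*}
This holds for every $\lambda$, so $I(\lambda)=\Sigma^{-1}$ is constant in $\lambda$.

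As a cross-check, one can use the equivalent Hessian form $I(\lambda)=-\EE[\nabla^2_\lambda\log p_\lambda(z)]$. Here $\nabla^2_\lambda\log p_\lambda(z)=-\Sigma^{-1}$ deterministically, so it gives the same answer. The regularity conditions for interchanging differentiation and integration hold trivially for Gaussians.

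There is no real obstacle here. The only point needing care is the convention: if the paper intended $I(\lambda)$ to be the Fisher information of all $n$ samples, it would be $n\Sigma^{-1}$. The factor $n$ multiplying $\trace{C I C^\top}$ in Theorem~\ref{thm: bayesianCR} shows that the per-sample convention is the one intended, consistent with the choice $C(\lambda)=\nabla\cT(0)\Sigma$ in the proof of Theorem~\ref{thm: lowerboundtheoremgeneral}.
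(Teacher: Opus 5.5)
Your proof is correct and takes the same route as the paper, whose proof simply cites the standard fact that the Fisher information of a Gaussian with respect to its mean is the inverse covariance. Your score computation $\nabla_\lambda \log p_\lambda(z)=\Sigma^{-1}(z-\lambda)$, giving $\Sigma^{-1}\Sigma\Sigma^{-1}=\Sigma^{-1}$, spells that fact out, and your reading of $I(\lambda)$ as the per-sample information (matching the factor $n$ in the Bayesian Cram\'er--Rao bound) is the right one.
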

\begin{proof}
Note that $S~\sim N(\lambda,\Sigma)$ and the Fisher information of a Gaussian vector with respect to the mean parameter is its inverse covariance matrix.
\end{proof}
\begin{lemma}\label{lem: tracelb}
Under Assumption~\ref{assum: regularityofT}, when $n$ is large enough so that 
\begin{align}\label{eqn: distg(lambda)} 
	\EE_{\rho}\left[\norm{\nabla \cT(0)^{-\top} (\nabla \cT(\lambda) - \nabla \cT(0))}_2\right] \le \frac{1}{2},
\end{align} 
we have 
$$
\EE_\rho\left[\trace{\nabla \cT(0) \Sigma \nabla \cT(\lambda)^\top}\right]
\ge  \frac{1}{2} \trace{\nabla \cT(0) \Sigma \nabla \cT(0)^\top}.
$$
\end{lemma}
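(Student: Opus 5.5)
The plan is to write the difference $\trace{\nabla \cT(0) \Sigma \nabla \cT(0)^\top} - \EE_\rho[\trace{\nabla \cT(0) \Sigma \nabla \cT(\lambda)^\top}]$ as a perturbation term and bound it by half of the main term. By linearity of trace and expectation,
\begin{align*}
\EE_\rho\left[\trace{\nabla \cT(0) \Sigma \nabla \cT(\lambda)^\top}\right] = \trace{\nabla \cT(0) \Sigma \nabla \cT(0)^\top} + \EE_\rho\left[\trace{\nabla \cT(0) \Sigma (\nabla \cT(\lambda) - \nabla \cT(0))^\top}\right],
\end{align*}
so it suffices to show that the absolute value of the second term is at most $\frac{1}{2}\trace{\nabla \cT(0) \Sigma \nabla \cT(0)^\top}$.

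To connect the perturbation to the quantity in~\eqref{eqn: distg(lambda)}, I will insert the identity $\nabla \cT(0)^\top \nabla \cT(0)^{-\top} = I$ (valid since $\nabla \cT(0)$ is invertible by Assumption~\ref{assum: regularityofT}). Writing $M := \nabla \cT(0) \Sigma \nabla \cT(0)^\top \succeq 0$ and $E(\lambda) := \nabla \cT(0)^{-\top}(\nabla \cT(\lambda) - \nabla \cT(0))$, and noting that $\nabla\cT(0)^{-\top}(\nabla\cT(\lambda)-\nabla\cT(0))$ transposes appropriately, the perturbation becomes
\begin{align*}
\trace{\nabla \cT(0) \Sigma \nabla \cT(0)^\top \nabla \cT(0)^{-\top}(\nabla \cT(\lambda) - \nabla \cT(0))^\top}
\end{align*}
which I will arrange into the form $\trace{M E(\lambda)^\top}$ (or $\trace{E(\lambda) M}$ after cycling and transposing; the orientation of the inverse transpose must be checked so that the matrix multiplying $M$ is exactly the one whose operator norm appears in~\eqref{eqn: distg(lambda)}).

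Next apply the standard inequality $|\trace{MB}| \le \|B\|_2 \, \|M\|_{nuc} = \|B\|_2 \trace{M}$, valid for $M$ positive semidefinite (von Neumann / Hölder duality between operator and nuclear norms). This gives pointwise $|\trace{M E(\lambda)^\top}| \le \|E(\lambda)\|_2 \trace{M}$. Taking expectation over $\rho$ and using Jensen ($|\EE X| \le \EE|X|$) together with the hypothesis~\eqref{eqn: distg(lambda)} yields $|\EE_\rho \trace{M E(\lambda)^\top}| \le \frac12 \trace{M}$, and hence the claim. Integrability is not an issue since $\rho$ has compact support and $\nabla\cT$ is continuous.

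The only delicate step is the bookkeeping of transposes: the hypothesis is phrased with $\nabla \cT(0)^{-\top}$ on the left of $\nabla\cT(\lambda)-\nabla\cT(0)$, so I must make sure the factor that gets peeled off is $M=\nabla\cT(0)\Sigma\nabla\cT(0)^\top$ sandwiched correctly; if the natural factorization instead produces $(\nabla\cT(\lambda)-\nabla\cT(0))\nabla\cT(0)^{-1}$, I will use that the operator norm is invariant under transposition, $\|B\|_2=\|B^\top\|_2$, which converts it to the form in the hypothesis. This also makes Remark~\ref{remark: linearcaselemma} immediate: for linear $\cT$, $E\equiv 0$ and the bound holds with constant $1$ for every $n$.
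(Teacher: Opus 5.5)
\textbf{The gap is in the transpose step.} Your route is the same as the paper's: peel off $\trace{M}$, insert $\nabla\cT(0)^\top\nabla\cT(0)^{-\top}=I$, apply the operator/nuclear-norm H\"older bound, then Jensen and the hypothesis. But the step you deferred as ``bookkeeping'' does not go through.

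Write $J=\nabla\cT(0)$ and $D=\nabla\cT(\lambda)-\nabla\cT(0)$. The remainder is $\trace{J\Sigma D^\top}=\trace{M\,J^{-\top}D^\top}$, so H\"older produces the factor $\|J^{-\top}D^\top\|_2=\|DJ^{-1}\|_2$. Transposing $DJ^{-1}$ gives back $J^{-\top}D^\top$, not $J^{-\top}D$. So the invariance $\|B\|_2=\|B^\top\|_2$ never reaches the quantity in the hypothesis, and $\trace{MJ^{-\top}D^\top}\neq\trace{ME(\lambda)^\top}$ in general.

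The discrepancy is real. Take $d=2$, $J=\begin{pmatrix}0&b\\1&0\end{pmatrix}$, $\Sigma=\mathrm{diag}(1,b^{-2})$ and $D=-cJ^\top$ with $c<\frac12$. Then:
\begin{itemize}
\item $J^{-\top}D=-cI$ has norm $c$, so the hypothesis holds.
\item $\|DJ^{-1}\|_2=cb$.
\item $M=I$, so $\trace{M}=2$.
\item $\trace{J\Sigma D^\top}=-c(b+b^{-1})$, which is far below $-1=-\frac12\trace{M}$ once $b$ is large.
\end{itemize}
A variant of this example can be realized by a $C^1$ diffeomorphism $\cT$. So the lemma as literally stated fails for non-symmetric Jacobians, and no rearrangement of transposes can close your argument.

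\textbf{The missing ingredient is symmetry.} Where the lemma is used (via Theorem~\ref{thm: lowerboundtheoremgeneral} in the proof of Theorem~\ref{thm: mainlowerboundtheorem_intro}), $\nabla\cT(\lambda)=\nabla^2F(\cT(\lambda))^{-1}$ is symmetric for every $\lambda$. Hence $D^\top=D$ and the remainder equals $\trace{J\Sigma J^\top J^{-\top}D}=\trace{ME(\lambda)}$. Your bound $|\trace{ME(\lambda)}|\le\|E(\lambda)\|_2\trace{M}$, followed by Jensen and the hypothesis, then finishes the proof.

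The paper's proof relies on exactly this, tacitly. Its displayed decomposition writes the remainder as $\trace{\nabla\cT(0)\Sigma\nabla\cT(0)^\top\nabla\cT(0)^{-\top}(\nabla\cT(\lambda)-\nabla\cT(0))}$, with no transpose on the difference.

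You have two ways to repair the statement:
\begin{itemize}
\item Add the assumption that $\nabla\cT(\lambda)$ is symmetric; this costs nothing in the application.
\item For general $\cT$, replace the hypothesis by $\EE_\rho\|(\nabla\cT(\lambda)-\nabla\cT(0))\nabla\cT(0)^{-1}\|_2\le\frac12$. Your argument then works verbatim.
\end{itemize}
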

\begin{proof}

Performing some basic linear algebra, we have

\begin{align*}
	& \quad \EE_\rho \left[\trace{\nabla \cT(0) \Sigma \nabla \cT(\lambda)^\top}\right]\\
	&= \EE_\rho\left[\trace{\nabla \cT(0) \Sigma \nabla \cT(0)^\top}\right] +  \EE_\rho\left[\trace{\nabla \cT(0) \Sigma \nabla \cT(0)^\top \nabla \cT(0)^{-\top} (\nabla \cT(\lambda) - \nabla \cT(0))}\right].
\end{align*}
Moreover, 
\begin{align*}
	&\quad\EE_\rho\left[\left|\trace{\nabla \cT(0) \Sigma \nabla \cT(0)^\top \nabla \cT(0)^{-\top} (\nabla \cT(\lambda) - \nabla \cT(0))}\right|\right]\\
	&\le \EE\left[\|\nabla \cT(0) \Sigma \nabla \cT(0)^\top \nabla \cT(0)^{-\top} (\nabla \cT(\lambda) - \nabla \cT(0)) \|_{nuc}\right]\\
	&\le \EE_\rho\left[\|\nabla \cT(0) \Sigma \nabla \cT(0)^\top\|_{nuc} \norm{\nabla \cT(0)^{-\top} (\nabla \cT(\lambda) - \nabla \cT(0))}_2\right]\\
	&\le \frac{1}{2} \trace{\nabla \cT(0) \Sigma \nabla \cT(0)^\top},
\end{align*}
	where the first inequality follows from the fact that $\trace{A}\le \|A\|_{nuc}$ for any square matrix $A$, the second inequality follows from $\norm{AB}_{nuc} \le \|A\|_{nuc} \norm{B}_2$, and the third inequality follows from the positive semi-definiteness of $\nabla \cT(0) \Sigma \nabla \cT(0)^\top $ and equation~\eqref{eqn: distg(lambda)}. The result follows from combining the pieces.
\end{proof}
\begin{remark}\label{remark: linearcaselemma}
	Note that if $\cT$ is a linear map, the conclusion of Lemma~\ref{lem: tracelb} holds for any $n \ge 1$ because $\nabla \cT(\lambda) = \nabla \cT(0)$ for any $\lambda$.
\end{remark}

Next, we apply the main theorem of this section to stochastic optimization problems and prove Theorem~\ref{thm: mainlowerboundtheorem_intro}.

\subsection{Proof of Theorem~\ref{thm: mainlowerboundtheorem_intro}}\label{sec:lower_bound_proof_stochastic}

Let us consider the sample level objective function  $f_0(x, z) = F(x) - \dotp{z,x}$. We denote the distribution $N(\lambda,\Sigma)$ by $P_\lambda$.  Direct calculation shows that $$F_{f_0,P_{\lambda}}(x) = F(x) - \dotp{\lambda, x}.$$ 
Since $F_{f_0,P_{\lambda}}$ is strongly convex, its minimizer exists and is unique for any $\lambda \in \RR^d$. Let us define the map $\cT:\RR^d \rightarrow \RR^d$ that maps $\lambda$ to the minimizer of $F_{f_0,P_\lambda}$.  Strong convexity of $F$  implies that $\cT$ is bijective. For notational simplicity, we define the parameterized gradient map 
\begin{align*}
	G(x, \lambda) = \nabla F_{f_0,P_\lambda}(x) = \nabla F(x) - \lambda. 
\end{align*}                         
A direct calculation shows that 
\begin{align*}
	\nabla_x G(x,\lambda) = \nabla^2 F(x) \qquad \text{and} \quad \nabla_\lambda G(x,\lambda) = -I.
\end{align*}
By the definition of $\cT$, we have $G(\cT(\lambda), \lambda) = 0$ and $\cT(0) = x^\star$. Since $\nabla^2 F(x)$ is positive definite for any $x$, we also have that the map $\cT$ is $C^1$ by the implicit function theorem. Additionally, 
\begin{align}\label{eqn: gradientofg}
	\nabla \cT(\lambda) = - \nabla_x G(\cT(\lambda), \lambda)^{-1} \nabla_\lambda G(\cT(\lambda), \lambda) = \nabla^2 F(\cT(\lambda))^{-1}.
\end{align}        
Let $U \Gamma V^\top$ be the Singular Value Decomposition (SVD) of $\nabla^2 F(x^\star)^{-1} \Sigma^{1/2}$. We suppose that the parameter $\lambda$ takes the form of~\eqref{eqn:prior} with $Q=V$ and let $\rho$ be the density of $\lambda$. For any $\lambda \in \supp(\rho)$, we have
\begin{align}
	\|\nabla^2 F(x^\star)^{-1} \lambda\|_2^2 &= \frac{1}{n} \|\nabla^2 F(x^\star)^{-1} \Sigma^{1/2} V Z\|_2^2 \notag\\
	&= \frac{1}{n} \|\Gamma Z\|_2^2\notag\\
	&\le \frac{1}{n} \sum_{i=1}^{d} \Gamma_{ii}^2 \label{eqn:property_Z}\\
	&= \frac{1}{n} \trace{\nabla^2 F(x^\star)^{-1} \Sigma \nabla^2 F(x^\star)^{-1}},\label{eqn:def_of_n_tight}\\
	&\le \frac{1}{64 \liph^2} \label{eqn:def_of_n}
\end{align}
where the estimate~\eqref{eqn:property_Z} follows since $Z \in [-1,1]^{\otimes d}$ pointwise and the estimate~\eqref{eqn:def_of_n} follows from our assumption on $n$. Thus, for any $\lambda \in \supp(\rho)$, we have 
$$\|\nabla^2 F(x^\star)^{-1} \lambda\|_2 \le \frac{1}{8\liph}.$$
By Lemma~\ref{lem:small_noise_close_solution},  we also have 
\begin{align}\label{eqn:Tlambda_bound}
	\|\cT(\lambda)  - \cT(0)\|_2 \le 2 \|\nabla^2 F(x^\star)^{-1} \lambda\|_2 \le \frac{1}{4 \liph}.
\end{align}
Next, we show that $\cT$ satisfies the conditions of Theorem~\ref{thm: lowerboundtheoremgeneral}.  To this end, denote  $\nabla^2 F(\cT(\lambda)) - \nabla^2 F(x^\star)$ by $\Delta(\lambda)$.  Since the Hessian matrices are symmetric and $\nabla^2 F$ is Lipschitz continuous, we have
\begin{equation}\label{eqn:delta_nablaf}
		\begin{aligned}
			\norm{\Delta(\lambda) \nabla^2 F(x^\star)^{-1}}_2 &= \norm{\nabla^2 F(x^\star)^{-1} (\nabla^2 F(\cT(\lambda)) - \nabla^2 F(x^\star))}_2\\
			&\le \liph \|\cT(\lambda) - \cT(0)\|_2\\
			&\le \frac{1}{4}. 
		\end{aligned}
	\end{equation}
As a result, we have
\begin{align}
		\EE_\rho\left[\norm{\nabla \cT(0)^{-\top}(\nabla \cT(\lambda) - \nabla \cT(0))}_2\right] &= \EE_\rho[\norm{\nabla^2 F(x^\star)(\nabla^2 F(\cT(\lambda))^{-1} - \nabla^2 F (x^\star)^{-1})}_2] \notag \\
		&= \EE_\rho[\norm{\nabla^2 F(x^\star)[ \nabla^2 F(x^\star)^{-1}(I + \Delta(\lambda) \nabla^2 F(x^\star)^{-1})^{-1} - \nabla^2 F(x^\star)^{-1}]}_2] \notag\\
		&= \EE_\rho[\norm{(I + \Delta(\lambda) \nabla^2 F(x^\star)^{-1})^{-1} - I}_2] \notag\\
		&\le 2\EE_\rho[\norm{\Delta(\lambda) \nabla^2 F(x^\star)^{-1}}_2] \label{eqn:key_bound_1} \\
		&\le \frac{1}{2},\label{eqn:key_bound_2}
\end{align}
where the estimate~\eqref{eqn:key_bound_1} follows from~\eqref{eqn:delta_nablaf} and Lemma~\ref{lem: opnormineq}, and the estimate~\eqref{eqn:key_bound_2} follows from~\eqref{eqn:delta_nablaf}. Applying Theorem~\ref{thm: lowerboundtheoremgeneral}, we then have 
\begin{align}\label{eqn: lowerboundforestglambda}
	\inf_{\widehat x_n \in \widehat \cX_n}\EE_{\lambda \sim \rho} \EE_{z_i \overset{\mathrm{iid}}{\sim} N(\lambda,\Sigma)} \|\widehat x_n (\{z_i\}_{i=1}^{n}, f) - \cT(\lambda)\|_2^2 \ge  \frac{ \trace{\nabla^2 F(x^\star)^{-1} \Sigma \nabla^2 F(x^\star)^{-1}}}{4(\pi^2 +1) n}.
\end{align}
Using~\eqref{eqn:def_of_n_tight}, it is then straightforward to verify that $(f_0,P_\lambda) = (f_0,N(\lambda,\Sigma)) \in \nclass(n,F,\Sigma)$ for any $\lambda \in \supp(\rho)$.
Combining this with the fact that $x^\star(F_{f_0,P_\lambda}) =  \cT(\lambda)$, we have 
\begin{align*}
	&\quad \inf_{\widehat x_n \in \widehat \cX_n} \sup_{(f,P)\in \nclass(n,F,\Sigma)} \EE_{z_i\overset{\mathrm{iid}}{\sim} P}[\|\widehat x_n(\{z_i\}_{i=1}^{n}, f) - x^\star(F_{f,P})\|_2^2]\\
	&\ge \inf_{\widehat x_n \in \widehat \cX_n}\EE_{\lambda \sim \rho} \EE_{z_i \overset{\mathrm{iid}}{\sim} N(\lambda,\Sigma)} \|\widehat x_n (\{z_i\}_{i=1}^{n},f_0) - \cT(\lambda)\|_2^2 \\
	& \ge \frac{ \trace{\nabla^2 F(x^\star)^{-1} \Sigma \nabla^2 F(x^\star)^{-1}}}{4(\pi^2 +1) n},
\end{align*}	
as claimed.
\qed

\subsection{Proof of Proposition~\ref{prop:necessity_liph_intro}} \label{sec:necessity_liph_proof}

Our proof will follow Fano's method. We first provide a construction for the packing and state Fano's lower bound. We then use this to prove the proposition to conclude the section.

\subsubsection{Construction and Fano lower bound}
For any $\mu >0$ and $\liph > 0$, we first construct a population objective function satisfying Assumption~\ref{assum: non-quadratic} with parameters $(\mu, 2+\mu, \liph).$
\begin{lemma}\label{lem:function_justify_liph}
	For any $\liph' >0$, define a one-dimensional function $g$ by
	\begin{align*}
		g(t) = \begin{cases}
			\frac{1}{2} t^2 - \frac{\liph'^2}{6} t^4  + \frac{\liph'^4}{30} t^6 & |t| < \frac{1}{\liph'}\\
			\frac{8}{15\liph'}\left(|t| - \frac{1}{\liph'} \right) + \frac{9}{10\liph'^2} & |t| \ge \frac{1}{\liph'}
		\end{cases}
	\end{align*}
	and another function $G \colon \RR^d \rightarrow \RR$ by $G(x) = g(\|x\|_2).$  Then $ F(x) := G(x) + \frac{\mu}{2}\|x\|_2^2$ satisfies Assumption~\ref{assum: non-quadratic} with parameters $(\mu, 2+\mu, \frac{48\liph'}{15}).$ In particular, for any $\liph >0$, we can take $\liph' = \frac{ \liph}{4}$ and obtain a function satisfying Assumption~\ref{assum: non-quadratic} with parameters $(\mu, 2+\mu, \liph)$.
\end{lemma}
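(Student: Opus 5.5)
The plan is to compute the Hessian of the radial function $G$ explicitly and read off all three parameters from it. Write $r=\|x\|_2$, $u=x/r$ and $s=\liph' r$. For $x\neq 0$, the Hessian of a radial function is
\begin{align*}
\nabla^2 G(x) = g''(r)\,uu^\top + \frac{g'(r)}{r}\,(I-uu^\top).
\end{align*}
Only $g'$ and $g''$ enter the assumption, and a constant shift of the outer branch of $g$ changes neither. So I would first check that the two branches of $g$ join in a $C^2$ way at $r=1/\liph'$. Inside the ball, $g'(t)=t-\tfrac{2\liph'^2}{3}t^3+\tfrac{\liph'^4}{5}t^5$ and $g''(t)=(1-\liph'^2t^2)^2$. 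At the boundary these equal $\tfrac{8}{15\liph'}$ and $0$, which are exactly the slope and curvature of the linear branch.

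\textbf{Strong convexity and smoothness.} Inside the ball, $g''=(1-s^2)^2\in[0,1]$ and $g'(r)/r=1-\tfrac23 s^2+\tfrac15 s^4\in[\tfrac{8}{15},1]$. Outside, $g''=0$ and $g'(r)/r=\tfrac{8}{15\liph' r}\in(0,\tfrac{8}{15}]$. Both eigenvalue functions therefore lie in $[0,1]$, so $0\preceq\nabla^2 G\preceq I$ everywhere. This also holds at $x=0$, where $\nabla^2 G(0)=I$. Hence $F=G+\tfrac{\mu}{2}\|\cdot\|_2^2$ satisfies $\mu I\preceq \nabla^2 F\preceq (1+\mu)I\preceq(2+\mu)I$. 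This gives $\mu$-strong convexity and $(2+\mu)$-smoothness, and the minimizer is $x^\star=0$ by symmetry.

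\textbf{Lipschitz Hessian.} Since $\nabla^2F(x^\star)=(1+\mu)I$, condition~\eqref{eqn:lip_hessian} requires
\begin{align*}
\|\nabla^2 G(x)-\nabla^2 G(x')\|_2\le (1+\mu)\tfrac{48\liph'}{15}\|x-x'\|_2 .
\end{align*}
It therefore suffices to show that $\nabla^2 G$ is $\tfrac{48\liph'}{15}$-Lipschitz. To avoid the singularity of $uu^\top$ at the origin, I would rewrite the Hessian as
\begin{align*}
\nabla^2 G(x)=\varphi(r)I+\frac{\psi(r)}{r^2}\,xx^\top,\qquad \varphi(r)=\frac{g'(r)}{r},\quad \psi(r)=g''(r)-\frac{g'(r)}{r}.
\end{align*}
Inside the ball, $\varphi(r)=1-\tfrac23\liph'^2r^2+\tfrac15\liph'^4r^4$ and $\psi(r)/r^2=\liph'^2\bigl(-\tfrac43+\tfrac45 s^2\bigr)$. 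Both are polynomials in $r^2$, so $\nabla^2G$ is a polynomial matrix field on the closed ball. Outside, $\varphi=\tfrac{8}{15\liph' r}$ and $\psi/r^2=-\tfrac{8}{15\liph' r^3}$, which is smooth for $r\ge 1/\liph'$. The field is continuous across the sphere $r=1/\liph'$ because $g$ is $C^2$ there.

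For $x,x'$ I would restrict $\nabla^2 G$ to the segment between them. This restriction is continuous and piecewise $C^1$, crossing the sphere at most twice. So it is enough to bound the operator norm of the directional derivative $D_v\nabla^2G(x)$ uniformly over unit vectors $v$ and all $x$. Differentiating the representation above gives three terms: $\varphi'(r)\langle u,v\rangle I$, a term $\bigl(\psi/r^2\bigr)'\langle u,v\rangle\,xx^\top$, and $\tfrac{\psi}{r^2}(vx^\top+xv^\top)$. Inside the ball each term is bounded by an explicit multiple of $\liph'$. For example, $|\varphi'|\le \liph'\cdot\max_{s\in[0,1]}\bigl(\tfrac43 s-\tfrac45 s^3\bigr)$, while $\tfrac{|\psi|}{r^2}\cdot 2r\le \tfrac83\liph' s$. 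Outside the ball, every term is $O(\liph'^2r^{-2}\cdot r^{\cdot})$ and decays, so the bounds are largest at $r=1/\liph'$ and match the inside values there.

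\textbf{Main obstacle.} Summing these bounds must not exceed $\tfrac{48}{15}\liph'=3.2\,\liph'$, and this constant bookkeeping is where I expect the difficulty. If the crude triangle-inequality sum overshoots, I would tighten it by diagonalizing in the basis $\{u,v_\perp\}$. There $D_v\nabla^2G$ acts nontrivially only on a two-dimensional subspace, so its norm can be computed directly rather than bounded term by term. The final statement follows by taking $\liph'=\liph/4$. This gives Hessian-Lipschitz constant $\tfrac{48}{15}\cdot\tfrac{\liph}{4}=\tfrac{4\liph}{5}\le\liph$, and the parameters $(\mu,2+\mu,\liph)$ follow.
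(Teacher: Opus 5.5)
Your plan is the paper's proof in all essentials. It uses the same decomposition $\nabla^2 G=\varphi(r)I+\frac{\psi(r)}{r^2}xx^\top$ (the paper's $u(\|x\|)I+v(\|x\|)xx^\top$) and the same three-term bound on the directional derivative. It adds two small gains: your eigenvalue argument gives the sharper $0\preceq\nabla^2 G\preceq I$, and your constant-shift remark is needed, because as printed the two branches of $g$ disagree in value at $1/\liph'$ (continuity requires the constant $\frac{11}{30\liph'^2}$, not $\frac{9}{10\liph'^2}$).

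The bookkeeping you flag closes without diagonalization. Add the three terms at the same $s$ and keep the exact third term $2\liph' s(\frac43-\frac45 s^2)$ instead of $\frac83\liph' s$. Inside the ball this gives $\liph'\bigl[(\frac43 s-\frac45 s^3)+\frac85 s^3+(\frac83 s-\frac85 s^3)\bigr]=\liph'(4s-\frac45 s^3)\le\frac{48}{15}\liph'$. Outside it gives $\frac{8+24+16}{15\liph' r^2}\le\frac{48}{15}\liph'$. Your cruder third-term bound (which reaches $\frac{24}{5}\liph'$) overshoots, and so does bounding each term by its own supremum. Indeed $\sup|\varphi'|=\frac{8\sqrt5}{27}\liph'>\frac{8}{15}\liph'$, so the paper's stated bound $|u'|\le\frac{8\liph'}{15}$ is itself slightly off, and its constant $\frac{48}{15}$ rests on this pointwise summation.
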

\begin{proof}
	It is straightforward to verify that $g$ is a convex function on $\RR$ and increasing on $[0,\infty)$. Therefore, $G$ is a convex function and $ F$ is $\mu$-strongly convex. Moreover, by construction, $g$ is $C^3$-smooth at the point $\frac{1}{\liph'}$, and as a result $G$ is $C^3$-smooth on $\mathbb{R}^d$. The Hessian of \(G\) is given by
	\begin{align*}
		\nabla^2 G(x)= u(\|x\|_2) I + v(\|x\|_2) xx^\top,
	\end{align*}		
	with
	\begin{align*}
		u(t)= \begin{cases}
			1-\frac{2\liph'^2}{3} t^2 + \frac{\liph'^4}{5}t^4 &\text{if } |t|<\frac{1}{\liph'}\\
			\frac{8}{15\liph' |t|} & 	\text{if } |t|\ge\frac{1}{\liph'}
		\end{cases}
	\end{align*}
	and 
	\begin{align*}
		v(t)= \begin{cases}
			-\frac{4}{3} \liph'^2 + \frac{4}{5} \liph'^4 t^2 &\text{if } |t|<\frac{1}{\liph'}\\
			- \frac{8}{15 \liph' t^3}  & 	\text{if }|t|\ge\frac{1}{\liph'}
		\end{cases}.
	\end{align*}
	By definition of $u$ and $v$, we have for any $x \in \RR^d$ that
	\begin{align*}
		\norm{\nabla^2 G(x)}_2 &\le |u(\|x\|_2)| + |v(\|x\|_2)| \|x\|_2^2 \le 2.
	\end{align*}
	By the mean value theorem, $\nabla G$ is $2$-Lipschitz. Since $\nabla  F(x) = \nabla G(x) + \mu x$, $\nabla F$ is $(2+ \mu)$-Lipschitz.
	
	Finally, we show that $\nabla^2  F$ is $\frac{48\liph'}{15}$-Lipschitz continuous. To this end, we denote the map $x \mapsto v(\|x\|_2) xx^\top$ by $h$. The Fr\'echet derivative of $h$ in the direction \(z\in\mathbb{R}^d\) is given by
	\begin{align*}
		Dh(x)[z] = w(\|x\|_2) \langle x, z \rangle xx^\top + v(\|x\|_2)\left(zx^\top + xz^\top\right),
	\end{align*}
	where 
	\begin{align*}
		w(t) = \begin{cases}
			\frac{8\liph'^4 }{5}  & \text{if } |t| < \frac{1}{\liph'}\\
			\frac{8}{5 \liph' t^5}  & \text{if } |t| \ge \frac{1}{\liph'}. 
		\end{cases}
	\end{align*}
	Using properties of the operator norm, we obtain the bound
	\begin{align*}
		\|Dh(x)[z]\|_2 & \le |w(\|x\|)| |\langle x, z \rangle| \|xx^\top\|_2 + |v(\|x\|)|\left(\|zx^\top\|_2+\|xz^\top\|_2\right)\\
		&\le \left(|w(\|x\|_2)| \|x\|_2^3 + 2|v(\|x\|_2)| \|x\|_2  \right)\|z\|_2.
	\end{align*}
	By definition of $w$ and $v$, it is straightforward to verify that for any $x \in \RR^d$,
	\begin{align*}
		|w(\|x\|_2)| \|x\|_2^3 + 2|v(\|x\|_2)| \|x\|_2 \le \frac{8 \liph'}{3}.
	\end{align*}
	Therefore, by mean value theorem, $h$ is $\frac{8 \liph'}{3}$-Lipschitz with respect to the operator norm. Additionally, one can verify that $|u'(t)| \le \frac{8 \liph'}{15}$  for all $t \in \RR$, so $x \mapsto u(\|x\|_2) I$ is $\frac{8 \liph'}{15}$ Lipschitz with respect to the operator norm. Combining the pieces, we see that $\nabla^2 G$ is $\frac{48\liph'}{15}$-Lipschitz with respect to the operator norm, and so is $\nabla^2  F$. The result follows.
\end{proof}
For any $0 < \tilde \mu  \le 1$ and $\tilde L_H >0$, we let $\tilde F \colon \RR^d \rightarrow \RR$ be the function from Lemma~\ref{lem:function_justify_liph} satisfying Assumption~\ref{assum: non-quadratic} with parameters $(\tilde\mu, 2+\tilde\mu, \tilde L_H)$. Specifically, 
\begin{align}\label{eqn:function_justify_liph}
	\tilde F(x) = \begin{cases}
		\frac{1 + \tilde \mu}{2} \|x\|_2^2 - \frac{\tilde L_H^2}{6\cdot 4^2} \|x\|_2^4 + \frac{\tilde L_H^4}{30 \cdot 4^4} \|x\|_2^6 & \|x\|_2< \frac{4}{\tilde L_H}\\
		\frac{\tilde \mu}{2} \|x\|_2^2 + \frac{8 \cdot 4}{15\tilde L_H} \left(\|x\|_2 - \frac{4}{\tilde L_H}\right) + \frac{9\cdot 4^2}{10 \tilde L_H^2} & \|x\|_2 \ge \frac{4}{\tilde L_H}.
	\end{cases}
\end{align}
Our proof strategy is to derive a lower bound for minimizing the function $\tilde F$; we then establish Proposition~\ref{prop:necessity_liph_intro} by appropriately choosing $\tilde \mu$ and $\tilde L_H$ and rescaling $\tilde F$ accordingly. To this end, we define the sample objective function as follows:
\begin{align}\label{eqn:sample_obj}
	\tilde f_0(x,z) = \tilde F(x) - \dotp{z, x},
\end{align}
Let the sample distribution be $\tilde P_\theta = N(\theta, I)$. Direct calculation shows that $F_{\tilde f_0, \tilde P_\theta} (x) = \tilde F(x) - \dotp{\theta, x}$. The next proposition applies the Fano lower bound to this observation model.
\begin{proposition}[Fano lower bound]\label{prop:Fano}                                                                                                                                                                                         
	For $n \ge 1$, let $\{z_i\}_{i=1}^{n}$ denote i.i.d. samples drawn from the distribution $\tilde P_\theta$. Let $\widehat x_n$  denote a measurable function  of $\{z_i\}_{i=1}^{n}$.  Suppose that $0< \tilde \mu \le 1$. There exists a finite set $\tilde \Theta$, with each $\theta \in \tilde \Theta$ satisfying  $\|\theta\|_2 \le \frac{72}{\tilde L_H}$, such that the minimax risk
	\begin{align*}
		\inf_{\widehat x_n} \sup_{\theta \in \tilde \Theta} \EE_{z_i\overset{\mathrm{iid}}{\sim} \tilde P_\theta}[\|\widehat x_n(\{z_i\}_{i=1}^{n}) - x^\star(F_{\tilde f_0, \tilde P_\theta})\|_2^2] \ge \frac{12^2}{ \tilde L_H^2 \tilde \mu^2} \left(1- \frac{1}{d} - \frac{2\cdot 72^2 \cdot n}{\log 2 \cdot d\tilde L_H^2}\right).
	\end{align*} 
	In particular, if $d > 1$ and $n \le   \frac{\log 2 \cdot d\tilde L_H^2 }{2\cdot 144^2}$, then
	$$
	\inf_{\widehat x_n} \sup_{\theta \in \tilde \Theta} \EE_{z_i\overset{\mathrm{iid}}{\sim} \tilde P_\theta}[\|\widehat x_n(\{z_i\}_{i=1}^{n}) - x^\star(F_{\tilde f_0, \tilde P_\theta})\|_2^2] \ge \frac{36}{ \tilde L_H^2 \tilde\mu^2}.
	$$
\end{proposition}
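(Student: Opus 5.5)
The plan is to reduce to Fano's method via an explicit finite family of Gaussian location models whose minimizers are pushed into the region where $\tilde F$ is \emph{linear-plus-quadratic}. The starting point is that $\tilde F$ in~\eqref{eqn:function_justify_liph} is radial and $\tilde\mu$-strongly convex (Lemma~\ref{lem:function_justify_liph}). For $\|x\|_2 \ge 4/\tilde L_H$ its gradient is explicit:
\[
\nabla \tilde F(x) = \Bigl(\tilde\mu \|x\|_2 + \tfrac{32}{15\tilde L_H}\Bigr)\frac{x}{\|x\|_2}.
\]
Since $F_{\tilde f_0,\tilde P_\theta}(x) = \tilde F(x) - \dotp{\theta,x}$ is strongly convex, its minimizer is the unique solution of $\nabla\tilde F(x) = \theta$. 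Take $\theta = \bigl(\tilde\mu R + \tfrac{32}{15\tilde L_H}\bigr) v$ with $v\in\mathbb{S}^{d-1}$ and $R \ge 4/\tilde L_H$. Then the minimizer is exactly $x^\star(F_{\tilde f_0,\tilde P_\theta}) = R v$. I will set $R = 48/(\tilde L_H\tilde\mu)$. Because $\tilde\mu\le 1$, this gives $R\ge 48/\tilde L_H \ge 4/\tilde L_H$. It also gives $\|\theta\|_2 = \bigl(48 + \tfrac{32}{15}\bigr)/\tilde L_H \le 72/\tilde L_H$, which is the required norm bound on $\tilde\Theta$.

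Next I choose the directions. Let $\{v_1,\dots,v_M\}\subset \mathbb{S}^{d-1}$ be a maximal $\tfrac12$-separated set, and put $\tilde\Theta = \{\theta_j\}$ with $\theta_j$ built from $v_j$ as above. The minimizers $Rv_j$ are then pairwise separated by $R/2 = 24/(\tilde L_H\tilde\mu)$, so half the separation is $\delta = 12/(\tilde L_H\tilde\mu)$. A volumetric argument should give $\log M \ge d\log 2$, up to the constant bookkeeping needed to reproduce $1/d$ exactly. If the sphere packing only yields exponent $d-1$, I would switch to a slightly larger sphere or a hypercube/Gilbert--Varshamov construction and retune $R$; this constant matching is the fiddliest step.

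Fano's inequality then gives
\[
\inf_{\widehat x_n}\sup_j \EE\|\widehat x_n - Rv_j\|_2^2 \ge \delta^2\Bigl(1 - \frac{I + \log 2}{\log M}\Bigr),
\]
where $I$ is bounded by the maximal pairwise KL divergence. For $n$ samples from $N(\theta_j,I)$ versus $N(\theta_k,I)$, the KL divergence is $\tfrac n2\|\theta_j-\theta_k\|_2^2 \le \tfrac n2 (2\cdot 72/\tilde L_H)^2 = 2\cdot 72^2 n/\tilde L_H^2$. Dividing by $\log M \ge d\log 2$ yields exactly
\[
\frac{12^2}{\tilde L_H^2\tilde\mu^2}\Bigl(1-\frac1d-\frac{2\cdot72^2 n}{\log 2\cdot d\tilde L_H^2}\Bigr).
\]

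For the ``in particular'' part, I use two facts. First, $d\ge 2$ gives $1/d\le 1/2$. Second, $n \le \log 2\cdot d\tilde L_H^2/(2\cdot 144^2)$ makes the last term at most $1/4$. So the bracket is at least $1/4$, and the bound becomes $36/(\tilde L_H^2\tilde\mu^2)$. The main obstacle is the precise packing cardinality needed to match the stated constants; everything else is routine.
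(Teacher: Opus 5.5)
Your construction works and takes a different route from the paper. The one step you leave open is a real gap, and it is exactly the step that produces the constants $1/d$ and $\log 2\cdot d$. Everything else checks out:
\begin{itemize}
\item For $\|x\|_2\ge 4/\tilde L_H$ one has $\nabla\tilde F(x)=\bigl(\tilde\mu\|x\|_2+\tfrac{32}{15\tilde L_H}\bigr)x/\|x\|_2$, so the minimizer for $\theta_j$ is exactly $Rv_j$.
\item $\|\theta_j\|_2=(48+\tfrac{32}{15})/\tilde L_H\le 72/\tilde L_H$.
\item The KL bound, the Fano step and the ``in particular'' arithmetic are correct.
\end{itemize}
What is missing is a proof that a $\tfrac12$-separated subset of $\mathbb{S}^{d-1}$ can have $M\ge 2^d$ points. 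A bound with exponent $d-1$, such as $M\ge 2^{d-1}$, does not suffice. At $d=2$ it only gives $\log M\ge\log 2$, which makes the Fano bracket nonpositive, so the ``in particular'' claim is lost. Your fallbacks do not obviously recover the constants. A hypercube or Gilbert--Varshamov packing gives $\log M$ only a small constant times $d$. A larger sphere has little room, since $\|\theta_j\|_2\le 72/\tilde L_H$ forces $R\le\frac{1048}{15\tilde L_H\tilde\mu}$, and you would still need a lower bound on a sphere packing number.

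The missing fact is true and short to prove. Let $s$ be normalized surface measure on $\mathbb{S}^{d-1}$ with $d\ge 2$, and let $C(\phi)$ be a cap of angular radius $\phi$. Then $s(C(\phi))=I_d^{-1}\int_0^\phi\sin^{d-2}t\,dt$, where $I_d=\int_0^\pi\sin^{d-2}t\,dt$.
\begin{itemize}
\item The function $h(\phi)=\tfrac{I_d}{2}\sin^{d-1}\phi-\int_0^\phi\sin^{d-2}t\,dt$ vanishes at $\phi=0$ and $\phi=\pi/2$.
\item Its derivative $h'(\phi)=\sin^{d-2}\phi\,\bigl(\tfrac{(d-1)I_d}{2}\cos\phi-1\bigr)$ changes sign at most once on $[0,\pi/2]$, from nonnegative to negative.
\item Hence $h\ge 0$ there, which gives $s(C(\phi))\le\tfrac12\sin^{d-1}\phi$ for $0\le\phi\le\pi/2$.
\end{itemize}
A maximal $\tfrac12$-separated set is a $\tfrac12$-net, and a cap of chordal radius $\tfrac12$ has $\cos\phi=7/8$ and $\sin\phi=\sqrt{15}/8<\tfrac12$. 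Therefore $M\ge 2\,(8/\sqrt{15})^{d-1}\ge 2^d$, and $M=2=2^1$ when $d=1$. With this lemma inserted, your argument proves the statement as written.

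The paper avoids sphere packing by packing the full-dimensional ball. It takes an $r/3$-packing of $\overline B_r(0)$ with $r=72/\tilde L_H$, which has at least $3^d$ points by the standard ball estimate. At most one of these points lies in $\overline B_{r/6}(0)$, so at least $3^d-1\ge 2^d$ remain in the annulus. It then applies Lemma~\ref{lem:far_noise_far_sol}, which absorbs the direction term $\tfrac{32}{15\tilde L_H}\,x/\|x\|_2$ by the triangle inequality and yields $\|x^\star_i-x^\star_j\|_2\ge\frac{1}{2\tilde\mu}\|\theta_i-\theta_j\|_2\ge\frac{12}{\tilde L_H\tilde\mu}$.

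So the paper gets its cardinality bound off the shelf but loses separation. You make the minimizer map an exact dilation and keep separation $24/(\tilde L_H\tilde\mu)$, at the price of a sphere-packing estimate. This matters for the constant. In the $2\delta$-separated form of Fano \cite[Proposition 15.12]{wainwright2019high}, your separation gives $\delta^2=12^2/(\tilde L_H^2\tilde\mu^2)$, which is exactly the stated prefactor. The separation $12/(\tilde L_H\tilde\mu)$ obtained in the paper's proof supports only $\delta^2=6^2/(\tilde L_H^2\tilde\mu^2)$.
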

\begin{proof}
	Set $r = \frac{72}{\tilde{L}_H}$. Let $\{\theta_1,\theta_2,\ldots, \theta_M\}$ be a $r/3$-packing of $\overline B_r(0)$. Standard results (e.g.~\cite[Lemma 5.7]{wainwright2019high}) imply that we can find such a packing with $M \ge 3^d$.  By the definition of packing, at most one point of $\{\theta_1,\theta_2,\ldots, \theta_M\}$ can be in $\overline B_{r/6}(0)$. Therefore, there exists an $r/3$-packing of the annulus 
	$\left\{\theta \colon r/6 <  \|\theta\|_2 \le  r \right\}$ with number of elements at least $3^d -1$.
	Let $ \tilde \Theta := \{\theta_1, \theta_2,\ldots, \theta_M\}$ be such a packing and note that $M \ge 3^d - 1 \ge 2^d$. 
	By Lemma~\ref{lem:far_noise_far_sol}, we have  
	\begin{align*}
		\|x^\star(F_{\tilde f_0, \tilde P_{\theta_i}}) - x^\star(F_{\tilde f_0, \tilde P_{\theta_j}})\|_2 \ge \frac{12}{\tilde L_H \tilde  \mu} \qquad \text{ for any $1\le i <j \le M$.}
	\end{align*}
	On the other hand, since the distribution is standard Gaussian, we have
	\begin{align*}
		\mathrm{D_{KL}}(\tilde P_{\theta_i}^n || \tilde P_{\theta_j}^n) &= \frac{n}{2} \|\theta_i - \theta_j\|_2^2 \\
		&\le n(\|\theta_i\|_2^2 +  \|\theta_j\|_2^2)\\
		&\le \frac{2\cdot 72^2 \cdot n}{\tilde L_H^2}.
	\end{align*}
	By Fano's lower bound~\cite[Proposition 15.12]{wainwright2019high} and ~\cite[Equation 15.34]{wainwright2019high}, we have 
	\begin{align*}
		\inf_{\widehat x_n} \sup_{\theta \in \tilde \Theta} \EE_{z_i\overset{\mathrm{iid}}{\sim} \tilde P_\theta}[\|\widehat x_n(\{z_i\}_{i=1}^{n}) - x^\star(F_{\tilde f_0, \tilde P_\theta})\|_2^2] \ge \frac{12^2}{ \tilde L_H^2 \tilde  \mu^2} \left(1- \frac{1}{d} - \frac{2\cdot 72^2 \cdot n}{\log 2 \cdot d\tilde L_H^2} \right),
	\end{align*}
	as claimed. The remaining results then follow from a straightforward calculation.
\end{proof}

\begin{lemma}\label{lem:far_noise_far_sol}
	Let $\{\theta_i\}_{i=1}^{M}$ be a set of points such that $\frac{12}{\tilde L_H} \le \|\theta_i\|_2 \le \frac{72}{\tilde L_H} $ and 
	\begin{align}\label{eqn:delta_separation}
		\|\theta_i - \theta_j\|_2\ge \frac{24}{\tilde L_H}, \qquad \text{for all } 1 \le i < j \le M.
	\end{align}
	Suppose that $0< \tilde \mu \le 1$.  Then we have $\|x^\star(F_{\tilde f_0, \tilde P_{\theta_i}}) \|_2 \le \frac{72}{\tilde L_H \tilde \mu}$  for any $1 \le i \le M$ and 
	\begin{align*}
		\| x^\star(F_{\tilde f_0, \tilde P_{\theta_i}}) - x^\star(F_{\tilde f_0, \tilde P_{\theta_j}})\|_2 \ge \frac{1}{2 \tilde \mu } \|\theta_i - \theta_j\|_2 \ge \frac{12}{\tilde L_H \tilde \mu}, \qquad \text{for all } 1 \le i < j \le M.
	\end{align*}
\end{lemma}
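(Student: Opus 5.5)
The plan is to solve the first-order condition for each $x_i := x^\star(F_{\tilde f_0, \tilde P_{\theta_i}})$ in closed form, using the radial structure of $\tilde F$ from~\eqref{eqn:function_justify_liph}. Since $F_{\tilde f_0,\tilde P_{\theta}}(x) = \tilde F(x) - \dotp{\theta,x}$ is strongly convex, its unique minimizer satisfies $\nabla \tilde F(x_i) = \theta_i$. Write $\tilde F(x) = g(\|x\|_2) + \frac{\tilde\mu}{2}\|x\|_2^2$ with $g$ as in Lemma~\ref{lem:function_justify_liph} and $\liph' = \tilde L_H/4$. Then
\[
\nabla \tilde F(x) = \Big(\frac{g'(\|x\|_2)}{\|x\|_2} + \tilde\mu\Big)x .
\]
Because $g' \ge 0$ on $[0,\infty)$, the vector $x_i$ is a positive multiple of $\theta_i$. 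Its norm $r_i = \|x_i\|_2$ solves the scalar equation
\[
g'(r_i) + \tilde\mu r_i = \|\theta_i\|_2 .
\]

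The first step is to bound $g'$. On $[0,1/\liph')$ we have $g'(t) = t - \frac{2}{3}\liph'^2 t^3 + \frac{1}{5}\liph'^4 t^5$. Its derivative $(1-\liph'^2t^2)^2$ is nonnegative, so $g'$ is increasing there. Hence $0\le g'(t)\le g'(1/\liph') = \frac{8}{15\liph'}$ on that interval. On $[1/\liph',\infty)$, $g'$ equals this constant. Set $c := \frac{8}{15\liph'} = \frac{32}{15\tilde L_H}$, so that $0\le g'\le c$ everywhere.

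From the scalar equation, $\tilde\mu r_i \in [\|\theta_i\|_2 - c,\ \|\theta_i\|_2]$. The upper end gives $r_i \le \frac{72}{\tilde L_H\tilde\mu}$, which is the first claim. The lower end gives $r_i \ge \frac{12 - 32/15}{\tilde L_H \tilde\mu} \ge \frac{4}{\tilde L_H} = \frac{1}{\liph'}$, where we used $\tilde\mu\le 1$. So every $x_i$ lies in the outer region, where $g'\equiv c$. There the equation becomes linear in $r_i$, and it yields the explicit formula
\[
x_i = \frac{1}{\tilde\mu}\Big(\theta_i - c\,\frac{\theta_i}{\|\theta_i\|_2}\Big).
\]
Verifying that every $x_i$ lands in the outer region is the only delicate point, and the hypotheses $\|\theta_i\|_2\ge 12/\tilde L_H$ and $\tilde\mu\le1$ are exactly what make this work.

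For the separation, take differences of the explicit formula:
\[
\tilde\mu\,(x_i - x_j) = (\theta_i-\theta_j) - c\,(u_i - u_j), \qquad u_i := \frac{\theta_i}{\|\theta_i\|_2}.
\]
We have $\|u_i-u_j\|_2 \le 2$, so the perturbation has norm at most $2c = \frac{64}{15\tilde L_H}$. This is at most $\frac{12}{\tilde L_H} \le \frac12\|\theta_i-\theta_j\|_2$ by~\eqref{eqn:delta_separation}. The reverse triangle inequality then gives
\[
\tilde\mu\|x_i-x_j\|_2 \ge \tfrac12\|\theta_i-\theta_j\|_2 \ge \frac{12}{\tilde L_H}.
\]
Dividing by $\tilde\mu$ gives both inequalities in the lemma.
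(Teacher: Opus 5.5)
Your proof is correct and follows the same route as the paper. Both arguments confine every $x_i$ to the outer region $\|x\|_2 \ge 4/\tilde L_H$, use the explicit relation $\theta_i = \tilde\mu x_i + \frac{32}{15\tilde L_H}\frac{x_i}{\|x_i\|_2}$ there, and finish with the triangle inequality. The only difference is how the norm bounds are obtained: the paper uses $\tilde\mu$-strong convexity ($\tilde\mu\|x_i\|_2\le\|\theta_i\|_2$) and the $3$-Lipschitz gradient ($\|\theta_i\|_2\le 3\|x_i\|_2$), whereas you read them off the scalar equation $g'(r_i)+\tilde\mu r_i=\|\theta_i\|_2$ using $0\le g'\le c$.
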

\begin{proof}
	Simple calculation shows that $F_{\tilde f_0, \tilde P_\theta}(x) =\tilde  F(x) - \dotp{\theta, x}$. For notational simplicity, let $F_i : = F_{\tilde f_0, \tilde P_{\theta_i}}$.
	On the one hand, by strong convexity and the fact that  zero is the minimizer of $\tilde F$, we have
	\begin{align*}
		\tilde \mu \|x^\star(F_i)\|_2  \le  \| \nabla \tilde F(x^\star(F_i))\|_2 =	\|\theta_i\|_2, 
	\end{align*}
	so we have $\|x^\star(F_i)\|_2 \le  \frac{72}{\tilde L_H \tilde \mu}$ for any $1\le i \le M$. 
	On the other hand, since $\nabla \tilde F$ is $3$-Lipschitz, 
	\begin{align*}
		\|\theta_i\|_2 = \| \nabla \tilde F(x^\star(F_i))\|_2 \le 3\|x^\star(F_i)\|_2.
	\end{align*}
	Since $\|\theta_i\|_2 \ge \frac{12}{\tilde L_H}$, we then have $\|x^\star(F_i)\|_2 \ge \frac{4}{\tilde L_H}$ for any $ 1 \le i \le M$. By the definition of $\tilde F$~\eqref{eqn:function_justify_liph} and $x^\star(F_i)$, for any $1\le i \le M$, we have
	\begin{align*}
		\theta_i  = \tilde\mu x^\star(F_i) + \frac{32}{15\tilde L_H} \frac{x^\star(F_i)}{\|x^\star(F_i)\|_2}.
	\end{align*}
	Applying the triangle inequality and~\eqref{eqn:delta_separation}, for any $i \neq j$, we have
	\begin{align*}
		\|x^\star(F_i) - x^\star(F_j) \|_2 &\ge \frac{1}{\tilde \mu} \left(\|\theta_i - \theta_j\|_2 - \frac{64}{15 \tilde L_H}\right)\\
		&\ge \frac{1}{2\tilde \mu} \|\theta_i - \theta_j\|_2\\
		&\ge \frac{12}{\tilde L_H \tilde\mu},
	\end{align*}
	as desired.
\end{proof}
\subsubsection{Proof of  Proposition~\ref{prop:necessity_liph_intro}}
We are now ready to prove Proposition~\ref{prop:necessity_liph_intro}. For any parameters $L \ge  3\mu >0$, $\liph > 0$, let us consider the function $\tilde F$ defined by~\eqref{eqn:function_justify_liph} using parameters $\tilde \mu = \frac{2\mu}{L-\mu}$ and $\tilde L_H =\liph$. By Lemma~\ref{lem:function_justify_liph}, we know that $\tilde F$ satisfies Assumption~\ref{assum: non-quadratic} with parameters $\left(\frac{2\mu}{L-\mu},   \frac{2 L}{L-\mu}, \liph\right)$. Therefore, the function $F := \frac{L-\mu}{2} \cdot \tilde F$ satisfies Assumption~\ref{assum: non-quadratic} with parameters $(\mu, L,\liph)$. In addition, consider the sample objective function
\begin{align*}
	f(x,z) &=   \frac{L-\mu}{2} \cdot \tilde F(x) - \dotp{z,x} \\
	&= F(x) - \dotp{z,x}.
\end{align*}
Let us define the sample distribution  $P_\theta = N(\theta, \frac{(L-\mu)^2}{4} I)$ and  $\Theta = \{\frac{L-\mu}{2} \theta \colon \theta \in \tilde \Theta\}$, where $\tilde \Theta$ is the packing of the annulus from Proposition~\ref{prop:Fano}. For $\theta = \frac{L-\mu}{2}\tilde \theta$, we have
\[
F_{f,P_\theta}(x)
= \frac{L-\mu}{2}\bigl(\tilde F(x) - \langle \tilde \theta,x\rangle \bigr),
\]
so $F_{f,P_\theta}$ and $F_{\tilde f_0,\tilde P_{\tilde \theta}}$ have the same minimizer. Therefore, Proposition~\ref{prop:Fano}  implies that  if the number of samples $n \le   \frac{\log 2 \cdot d  L_H^2 }{2\cdot 144^2} $, then
\begin{align}\label{eqn:FANO}
	\inf_{\widehat x_n} \sup_{\theta \in  \Theta} \EE_{z_i\overset{\mathrm{iid}}{\sim}  P_\theta}[\|\widehat x_n(\{z_i\}_{i=1}^{n}) - x^\star(F_{ f,  P_\theta})\|_2^2] \ge \frac{36}{ \tilde L_H^2 \tilde \mu^2} = \frac{9 (L-\mu)^2}{ \liph^2  \mu^2}.
\end{align}
To complete the proof, we show that for any $\theta \in \Theta$, the instance $(f, P_\theta) \in \tilde \nclass\left(n,F, \frac{(L-\mu)^2}{4} I, \frac{36(L-\mu)}{\liph \mu}\right)$. To this end, we first note that $F_{ f,  P_\theta}$ has exact the same Hessian as $F$, and the gradient noise is always $N(0, \frac{(L-\mu)^2}{4} I)$. Moreover,  by Lemma~\ref{lem:far_noise_far_sol},
\begin{align*}
\|x^\star(F_{ f,  P_\theta}) - x^\star(F)\|_2 &= \|x^\star(F_{ f,  P_\theta})\|_2\\
&\le \frac{72}{\tilde L_H \tilde \mu} \\
&= \frac{36(L-\mu)}{\liph \mu}.
\end{align*}
So, $(f, P_\theta) \in \tilde \nclass\left(n,F, \frac{(L-\mu)^2}{4} I, \frac{36(L-\mu)}{\liph \mu}\right)$.
Consequently, the lower bound~\eqref{eqn:FANO} can be restated as 
\begin{align*}
	\inf_{\widehat x_n \in \widehat \cX_n} \sup_{(f,P)\in \tilde \nclass\left(n,F, \frac{(L-\mu)^2}{4} I, \frac{36(L-\mu)}{\liph \mu}\right)} \EE_{z_i\overset{\mathrm{iid}}{\sim} P}[\|\widehat x_n(\{z_i\}_{i=1}^{n}, f) - x^\star(F_{f,P})\|_2^2] \ge \frac{9 (L-\mu)^2}{ \liph^2  \mu^2}.
\end{align*}
\qed

\section{Auxiliary lemmas}

This appendix collects several auxiliary technical lemmas that are used in multiple proofs.
\begin{lemma}\label{lem:general_lambda_min}
	Suppose $F$ is $\mu$-strongly convex with respect to $\|\cdot\|$ and twice differentiable at a point $x \in \RR^d$,
	define
	$
	\lmin(x) := \inf_{\|w\|=1}\|\nabla^2 F(x) w\|_* .
	$
	Then $\lmin(x) \ge \mu$. Moreover, for any $v \in \RR^d$, we have 
	\begin{align*}
		\|\nabla^2 F(x)^{-1} v\| \le \frac{1}{\lmin(x)} \|v\|_* \le \frac{1}{\mu} \|v\|_*.
	\end{align*}
\end{lemma}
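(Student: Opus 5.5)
The plan is to prove the two claims in order, since the second follows from the first by norm duality. Let $\mu$ denote strong convexity with respect to $\|\cdot\| = \|\cdot\|_Q$, and fix the point $x$ at which $F$ is twice differentiable.

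First I will show that strong convexity gives the quadratic lower bound $\dotp{w, \nabla^2 F(x) w} \ge \mu \|w\|^2$ for every $w$. Apply the strong convexity inequality at the points $x$ and $x + s w$. Then swap the roles of the two points and add the two inequalities. This yields
\[
\dotp{\nabla F(x+sw) - \nabla F(x), sw} \ge \mu s^2 \|w\|^2 .
\]
Divide by $s^2$ and let $s \to 0$. Differentiability of $\nabla F$ at $x$ gives $\nabla F(x+sw) - \nabla F(x) = s\nabla^2 F(x) w + o(s)$, so in the limit $\dotp{w, \nabla^2 F(x) w} \ge \mu\|w\|^2$.

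Second, I will convert this into the bound $\lmin(x) \ge \mu$. By the definition of the dual norm, $\|y\|_* \ge \dotp{w,y}$ for any unit vector $w$. Taking $y = \nabla^2 F(x) w$ with $\|w\|=1$ gives
\[
\|\nabla^2 F(x) w\|_* \ge \dotp{w, \nabla^2 F(x) w} \ge \mu .
\]
Taking the infimum over unit $w$ yields $\lmin(x) \ge \mu$. The same computation shows that $\nabla^2 F(x)$ is injective, hence invertible, so the inverse in the second claim is well defined.

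Finally, for any $v$, set $w = \nabla^2 F(x)^{-1} v$. Homogeneity of the definition of $\lmin(x)$ gives $\|\nabla^2 F(x) w\|_* \ge \lmin(x)\|w\|$, that is, $\|v\|_* \ge \lmin(x)\|\nabla^2 F(x)^{-1} v\|$. Rearranging and using $\lmin(x)\ge\mu > 0$ gives
\[
\|\nabla^2 F(x)^{-1} v\| \le \frac{1}{\lmin(x)}\|v\|_* \le \frac{1}{\mu}\|v\|_* .
\]

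The only mildly delicate point is the first step. There we use only twice differentiability at the single point $x$, not continuity of the Hessian, and the limit argument above needs nothing more.
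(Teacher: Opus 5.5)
Your proposal is correct and follows essentially the same route as the paper: derive $\dotp{w,\nabla^2 F(x)w}\ge\mu\|w\|^2$ from strong convexity, use $\dotp{w,y}\le\|w\|\,\|y\|_*$ to obtain $\lmin(x)\ge\mu$, and then substitute $w=\nabla^2 F(x)^{-1}v$. Your gradient-monotonicity limit is a more explicit version of the paper's one-line claim that $\phi''(0)\ge\mu\|w\|^2$ for $\phi(t)=F(x+tw)$, and your injectivity remark justifies the invertibility of $\nabla^2 F(x)$, which the paper simply asserts.
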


\begin{proof}
	Fix any $w\in\mathbb{R}^d$ and consider the univariate function $\phi(t):=F(x+t w)$.
	By $\mu$-strong convexity of $F$ (with respect to $\|\cdot\|$), $\phi$ is $\mu\|w\|^2$-strongly convex on $\mathbb{R}$, hence $\phi''(0)\ge \mu\|w\|^2$.
	Since $\phi''(0)=\langle w,  \nabla^2 F(x) w\rangle$, we have
	\[
	\langle w, \nabla^2 F(x) w\rangle \ge \mu\|w\|^2.
	\]
	By duality, $\langle w,  \nabla^2 F(x) w\rangle \le \|w\|\cdot \| \nabla^2 F(x) w\|_*$, so
	\[
	\| \nabla^2 F(x) w\|_* \ge \mu\|w\|.
	\]
	Taking an infimum over $\|w\|=1$ yields $\lmin(x) \ge \mu$. Since $ \nabla^2 F(x)$ is invertible, for any $v \in \RR^d$, we have 
		\begin{align*}
			\|\nabla^2 F(x)^{-1} v\| \le \frac{1}{\lmin(x)} \|v\|_* \le \frac{1}{\mu} \|v\|_*.
	\end{align*} 
\end{proof}

\begin{lemma}\label{lem: opnormineq}
	If $\norm{A}_2 \le \frac{1}{2}$, then we have
	$$
	\norm{(I+A)^{-1} - I}_2\le 2\norm{A}_2.
	$$
\end{lemma}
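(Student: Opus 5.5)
We will prove the statement of Lemma~\ref{lem: opnormineq} with a Neumann series argument; the only work is justifying invertibility and bounding the tail of the series. Since $\norm{A}_2 \le \frac12 < 1$, the series $\sum_{k\ge 0} (-A)^k$ converges absolutely in operator norm. Multiplying it on either side by $I+A$ telescopes to $I$, so $I+A$ is invertible and
\begin{align*}
(I+A)^{-1} = \sum_{k=0}^{\infty} (-A)^k .
\end{align*}
Alternatively, invertibility follows directly: $\|(I+A)v\|_2 \ge \|v\|_2 - \|Av\|_2 \ge \frac12\|v\|_2$ for every $v$.

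Subtracting the $k=0$ term gives
\begin{align*}
(I+A)^{-1} - I = \sum_{k=1}^{\infty} (-A)^k = -A \sum_{k=0}^{\infty}(-A)^k = -A(I+A)^{-1}.
\end{align*}
The identity $(I+A)^{-1} - I = -A(I+A)^{-1}$ can also be checked directly, without series, by multiplying both sides on the right by $I+A$.

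Submultiplicativity of the operator norm then yields
\begin{align*}
\norm{(I+A)^{-1} - I}_2 \le \norm{A}_2 \sum_{k=0}^{\infty}\norm{A}_2^k = \frac{\norm{A}_2}{1-\norm{A}_2} \le 2\norm{A}_2,
\end{align*}
where the last step uses $1-\norm{A}_2 \ge \frac12$. Equivalently, the lower bound $\|(I+A)v\|_2 \ge \frac12\|v\|_2$ from the first paragraph gives $\norm{(I+A)^{-1}}_2 \le 2$, and combining this with the identity above yields the claim.

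There is no real obstacle. The only point needing care is to establish invertibility of $I+A$ before manipulating $(I+A)^{-1}$, and the hypothesis $\norm{A}_2 \le \frac12$ handles that immediately.
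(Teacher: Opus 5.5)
Your proof is correct and takes essentially the same route as the paper: write $(I+A)^{-1}-I$ as $-A$ times $(I+A)^{-1}$ (the paper writes $-(I+A)^{-1}A$; the two factors commute) and use $\|(I+A)^{-1}\|_2 \le 1/(1-\|A\|_2) \le 2$. Your Neumann-series and lower-bound arguments just spell out the invertibility of $I+A$ and this standard norm bound, which the paper cites without proof.
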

\begin{proof}
	Note that 
	\begin{align*}
		\norm{(I+A)^{-1} - I}_2 & =  \norm{-(I+A)^{-1} A}_2\\
		&\le \norm{(I+A)^{-1}} _2\norm{A}_2\\
		&\le 2 \norm{A}_2,
	\end{align*}
	where the last inequality follows from the fact that $\norm{(I+A)^{-1} }_2 \le \frac{1}{1-\norm{A}_2}$.
\end{proof}

The next lemma is a basic fact on matrix spectra, so we do not include a proof.    \begin{lemma}\label{lem:trace_bound_opnorm}
	Suppose that $A \in \RR^{d\times d}$ is symmetric positive definite and $B \in \RR^{d\times d}$. Then we have
	\begin{align*}
		\trace{AB} = \trace{BA} \le \norm{B}_2\trace{A}.
	\end{align*}
\end{lemma}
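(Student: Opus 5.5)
The claim is $\trace{AB}\le \norm{B}_2\trace{A}$ for symmetric positive definite $A$ and arbitrary $B$. The plan is to diagonalize $A$ and reduce the trace to a weighted sum of quadratic forms of $B$.

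First, cyclicity gives $\trace{AB}=\trace{BA}$ at once. Next, write the spectral decomposition $A=\sum_{i=1}^d \lambda_i u_iu_i^\top$, where the $\lambda_i>0$ are the eigenvalues and $\{u_i\}$ is an orthonormal eigenbasis. By linearity of the trace and $\trace{u_iu_i^\top B}=u_i^\top B u_i$, we obtain
\[
\trace{AB}=\sum_{i=1}^d \lambda_i\, u_i^\top B u_i .
\]

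Each quadratic form is controlled by the spectral norm through Cauchy--Schwarz:
\[
u_i^\top B u_i \le \|u_i\|_2\,\|Bu_i\|_2 \le \norm{B}_2 ,
\]
since each $u_i$ is a unit vector. Because every weight $\lambda_i$ is nonnegative, we can multiply these bounds by $\lambda_i$ and sum, which yields
\[
\trace{AB}\le \norm{B}_2\sum_{i=1}^d\lambda_i=\norm{B}_2\,\trace{A}.
\]

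No step here is genuinely hard. The one point that needs care is that positivity of the $\lambda_i$ is exactly what lets the termwise inequality survive the weighted sum. Since $B$ need not be symmetric, we also deliberately avoid any eigenvalue argument for $B$ and bound the quadratic forms $u_i^\top Bu_i$ only through $\norm{B}_2$. The same argument works verbatim for positive semidefinite $A$.
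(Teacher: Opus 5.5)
Your proof is correct and complete. The paper gives no proof of this lemma, calling it a basic fact about matrix spectra, and your argument is the standard one that fills it in: write $\trace{AB}=\sum_i \lambda_i u_i^\top B u_i$ using the eigendecomposition of $A$, then bound each quadratic form by $\norm{B}_2$. Because Cauchy--Schwarz actually gives $|u_i^\top B u_i|\le \norm{B}_2$, the same lines yield $|\trace{AB}|\le \norm{B}_2\trace{A}$ for positive semidefinite $A$. This is the same bound the paper obtains through nuclear-norm duality in the proof of Lemma~\ref{lem: tracelb}.
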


\begin{lemma}\label{lem:optimal_trace_when_close}
	Suppose that Assumption~\ref{assum: non-quadratic} holds.  For any $x \in \RR^d$, define 
	$$
	B(x) := \int_{0}^{1} \nabla^2 F(x^\star + t( x - x^\star)) dt.
	$$ 
	Then, for any $x$ such that $\|x -x^\star\| \le  \frac{1}{2\liph}$, we have 
	\begin{align*}
		\norm{B(x)^{-1} \nabla^2 F(x^\star)} \le 2.
	\end{align*}
\end{lemma}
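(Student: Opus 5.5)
Set $H := \nabla^2 F(x^\star)$ and write $B(x) = H + E(x)$ with $E(x) := \int_0^1 \big(\nabla^2 F(x^\star + t(x-x^\star)) - H\big)\,dt$. The idea is to express $B(x)^{-1}H$ as $(I + H^{-1}E(x))^{-1}$ and control the perturbation $H^{-1}E(x)$ using the instance-specific Lipschitz-Hessian condition~\eqref{eqn:lip_hessian}.

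\emph{Bounding the perturbation.} By~\eqref{eqn:lip_hessian}, for every $t\in[0,1]$,
\[
\big\|H^{-1}\big(\nabla^2 F(x^\star + t(x-x^\star)) - H\big)\big\| \le \liph\, t\,\|x-x^\star\|.
\]
Pulling $H^{-1}$ inside the integral (a linear map) and using the triangle inequality for the Bochner integral in the induced operator norm gives
\[
\|H^{-1}E(x)\| \le \liph\|x-x^\star\|\int_0^1 t\,dt = \tfrac{\liph}{2}\|x-x^\star\| \le \tfrac14
\]
whenever $\|x-x^\star\|\le \frac{1}{2\liph}$.

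\emph{Neumann series.} Since $B(x) = H\big(I + H^{-1}E(x)\big)$ and the induced norm $\|\cdot\|$ is submultiplicative, $\|H^{-1}E(x)\|\le \frac14 < 1$ makes $I + H^{-1}E(x)$ invertible. Its inverse has norm at most $1/(1-\frac14) = \frac43$, by the Neumann series in the operator norm induced by $\|\cdot\|$; this is the analogue of Lemma~\ref{lem: opnormineq}, with the same proof in any induced norm. Hence $B(x)$ is invertible. Note that invertibility also follows directly from $B(x)\succeq \mu$-strong convexity, but it is not needed separately.

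\emph{Ordering of factors.} The quantity to bound is $B(x)^{-1}H$, not $HB(x)^{-1}$, so one must check the factors come out in the right order:
\[
B(x)^{-1}H = \big(I + H^{-1}E(x)\big)^{-1}H^{-1}H = \big(I + H^{-1}E(x)\big)^{-1}.
\]
Therefore $\|B(x)^{-1}H\| \le \frac43 \le 2$.

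The only potential obstacle is this order-of-factors bookkeeping, which is exactly why the Lipschitz condition is normalized by $H^{-1}$ on the left; with that normalization it works directly. Everything else is routine, and the constant $2$ leaves slack even without the factor $\frac12$ from $\int_0^1 t\,dt$.
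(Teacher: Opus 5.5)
Your proof is correct and follows essentially the same route as the paper: both write $\nabla^2 F(x^\star)^{-1}B(x) = I + \nabla^2 F(x^\star)^{-1}E(x)$ and control the perturbation with the Lipschitz-Hessian condition~\eqref{eqn:lip_hessian}. The paper inverts via the lower bound $\|\nabla^2 F(x^\star)^{-1}B(x)v\| \ge (1-\liph\|x-x^\star\|)\|v\| \ge \frac12\|v\|$, without using the factor $\int_0^1 t\,dt = \frac12$. You instead use the Neumann series and keep that factor, which gives the sharper constant $\frac43$.
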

\begin{proof}
	For any unit vector $v$, we have
	\begin{align*}
		\norm{\nabla^2 F(x^\star)^{-1} B(x) v} &\ge \norm{v} - \liph\|x -x^\star\| \|v\|\\
		&\ge \frac{1}{2} \|v\|,
	\end{align*}
	where the first inequality follows from Assumption~\ref{assum: non-quadratic}.
	Hence, $\norm{B(x)^{-1} \nabla^2 F(x^\star)} \le 2,$ as desired.
\end{proof}

\begin{lemma}\label{lem:small_noise_close_solution}
	Suppose that Assumption~\ref{assum: non-quadratic} holds.  For any $a \in \RR^d$, let $x(a)$ be the unique solution to the equation $\nabla F(x) = a$. For any $a$ satisfying $\|\nabla^2 F(x^\star)^{-1} a \| \le \frac{1}{4\liph}$, we have 
	$$
	\|x(a) - x^\star\| \le 2\|\nabla^2 F(x^\star)^{-1} a\| \le \frac{1}{2\liph}.
	$$
\end{lemma}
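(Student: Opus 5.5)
We need to prove Lemma (small noise close solution): if $\|A^{-1}a\|\le 1/(4L_H)$ with $A=\nabla^2F(x^\star)$, then $\|x(a)-x^\star\|\le 2\|A^{-1}a\|$.

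The plan is a continuation (or fixed-point) argument built on the integral form of the gradient. Write $A = \nabla^2 F(x^\star)$ and $B(x)$ as in Lemma~\ref{lem:optimal_trace_when_close}. Since $\nabla F(x^\star)=0$, the fundamental theorem of calculus gives $\nabla F(x) = B(x)(x-x^\star)$. The solution $x(a)$ therefore satisfies $B(x(a))(x(a)-x^\star) = a$, equivalently
\begin{align*}
x(a) - x^\star = A^{-1}a - A^{-1}\bigl(B(x(a)) - A\bigr)(x(a)-x^\star).
\end{align*}
By Assumption~\ref{assum: non-quadratic}, for any $x$ we have $\|A^{-1}(B(x)-A)\| \le \int_0^1 L_H t\|x-x^\star\|\,dt = \frac{L_H}{2}\|x-x^\star\|$. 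Taking norms in the display, setting $r=\|x(a)-x^\star\|$ and $s=\|A^{-1}a\|$, yields $r \le s + \frac{L_H}{2} r^2$.

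This quadratic inequality alone does not force $r$ to be small, since large $r$ also satisfies it. The main obstacle is ruling out that branch. I would handle it by continuity along the path $\tau\mapsto x(\tau a)$ for $\tau\in[0,1]$. This path is continuous (indeed $C^1$) by the implicit function theorem, because $\nabla^2F\succ0$ everywhere by strong convexity, and it starts at $r(0)=0$.

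For every $\tau$ we have $\|A^{-1}(\tau a)\| = \tau s \le s \le \frac{1}{4L_H}$, so $r(\tau)$ satisfies $r(\tau)\le s+\frac{L_H}{2}r(\tau)^2$. Now suppose $r(\tau)\le \frac{1}{2L_H}$. Then $\frac{L_H}{2}r(\tau)^2 \le \frac14 r(\tau)$, which gives $r(\tau)\le \frac43 s\le \frac{1}{3L_H}$. Hence $r(\tau)$ can never take a value in $(\frac{1}{3L_H},\frac{1}{2L_H}]$. Since $r(\cdot)$ is continuous and $r(0)=0$, it cannot jump past this gap, so $r(\tau)\le\frac{1}{3L_H}$ for all $\tau\in[0,1]$. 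At $\tau=1$ this gives $r\le \frac43 s\le 2s\le\frac{1}{2L_H}$, as claimed.

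An alternative that avoids the path argument is to apply Brouwer's fixed point theorem to the map $x\mapsto x^\star + A^{-1}a - A^{-1}(B(x)-A)(x-x^\star)$ on the ball $\overline{B}_{2s}(x^\star)$ in $\|\cdot\|$. On this ball the map sends points to within $s+\frac{L_H}{2}(2s)^2\le 2s$ of $x^\star$, so it maps the ball into itself and has a fixed point there. Any fixed point solves $\nabla F(x)=a$, and by uniqueness of that solution (strong convexity) the fixed point is $x(a)$, which therefore lies in the ball.
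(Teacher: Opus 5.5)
Your proof is correct, and its core estimate is the same as the paper's. The paper sets $H(x) = \nabla^2 F(x^\star)^{-1}\nabla F(x)$ and shows $\|H(x)\| \ge \|x-x^\star\| - \frac{L_H}{2}\|x-x^\star\|^2$, which is your inequality $r \le s + \frac{L_H}{2} r^2$ read in the other direction.

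Where you differ is in ruling out the large-$r$ branch. The paper restricts to the ball of radius $\frac{1}{2L_H}$ and notes that $H$ sends its boundary sphere outside the open ball of radius $\frac{1}{4L_H}$. It then simply asserts that, because $H$ is a $C^1$-diffeomorphism, $H^{-1}$ maps the smaller ball into the larger one; the connectedness/boundary argument behind this is left implicit.

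Your continuation argument makes that step explicit along a single curve. Indeed, $x(\tau a) = H^{-1}(\tau\, \nabla^2F(x^\star)^{-1}a)$ is the preimage of the segment from $0$ to $\nabla^2F(x^\star)^{-1}a$. The intermediate value theorem, together with the forbidden window $(\frac{1}{3L_H}, \frac{1}{2L_H}]$, then replaces the topology of the image of a ball. This needs only continuity of $a \mapsto x(a)$, which follows even without the implicit function theorem from strong monotonicity: $\|x(a)-x(b)\| \le \mu^{-1}\|a-b\|_*$. It also gives the slightly sharper constant $\frac43$ in place of $2$.

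Your Brouwer alternative is a genuinely different route. It never uses local invertibility of $H$. Instead it produces a solution of $\nabla F(x)=a$ inside $\overline{B}_{2s}(x^\star)$ directly (your self-map bound is in fact $s + 2L_H s^2 \le \frac32 s$) and identifies it with $x(a)$ through global uniqueness. The price is invoking a fixed-point theorem rather than elementary one-dimensional continuity.
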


\begin{proof}
	Define the map $H$ via $H(x) = \nabla^2 F(x^\star)^{-1} \nabla F(x)$. Note that for any $x$ with $\|x -x^\star\|\le \frac{1}{2\liph}$, we have
	\begin{align}
		\|H(x)\|&= \norm{\int_{0}^{1}\nabla H(x^\star + t(x-x^\star))(x-x^\star) dt } \notag\\
		&\ge  \norm{x- x^\star}- \frac{\liph}{2}\norm{x-x^\star}^2\label{eqn: H_lb_eq1}\\
		&\ge \frac{1}{2} \norm{x-x^\star} \label{eqn: H_lb_eq2},
	\end{align}
	where the estimate~\eqref{eqn: H_lb_eq1} follows since $\nabla H(x^\star) = I$ and Assumption~\ref{assum: non-quadratic} , and the estimate~\eqref{eqn: H_lb_eq2} follows from $\|x-x^\star\| \le \frac{1}{2\liph}$. As a result,
	$$
	\inf_{x \colon \|x-x^\star\| = \frac{1}{2\liph}} \|H(x)\| \ge \frac{1}{4\liph}.
	$$
	Since $H$ is a $C^1$-diffeomorphism, for any vector $\|y\| \le \frac{1}{4\liph}$, we must have $\|H^{-1}(y) - x^\star\| \le \frac{1}{2\liph}$. Note that $x(a) = H^{-1}(\nabla^2 F(x^\star)^{-1} a)$, for any $a$ with $\|\nabla^2 F(x^\star)^{-1} a\| \le \frac{1}{4\liph}$, we have the bound $\|x(a) - x^\star\|\le \frac{1}{2\liph}$. By estimate~\eqref{eqn: H_lb_eq2} again,  we have
	\begin{align*}
		\norm{x(a) - x^\star} \le 2 \|H(x(a))\| = 2\| \nabla^2 F(x^\star)^{-1} a\| \le \frac{1}{2\liph}.
	\end{align*}
	The proof is thus complete.
\end{proof}

Our next lemma is a basic fact about sub-exponential random variables.
\begin{lemma}[{\cite[Proposition 2.9]{wainwright2019high}}]\label{lem:subexp_tail}
	Let $X$ be a sub-exponential random variable with parameters $(\nu^2, \alpha)$. Then
	\begin{align*}
		P(|X - \EE[X]| \ge t) \le 2e^{-\frac{1}{2} \min\{\frac{t^2}{\nu^2}, \frac{t}{\alpha}\}}.
	\end{align*}
\end{lemma}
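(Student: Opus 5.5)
The plan is a Chernoff (exponential Markov) argument combined with a two-regime optimization of the free parameter. Write $Y := X - \EE[X]$, so that $Y$ is centered and, by the definition of sub-exponential with parameters $(\nu^2,\alpha)$, satisfies $\EE[e^{\lambda Y}] \le e^{\lambda^2\nu^2/2}$ for all $|\lambda| \le 1/\alpha$. I will bound the upper tail $P(Y \ge t)$. The lower tail $P(-Y \ge t)$ follows identically, because the moment generating function condition is symmetric in $\lambda$, so $-Y$ is also sub-exponential with parameters $(\nu^2,\alpha)$. A union bound over the two tails then produces the factor $2$.

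For the upper tail, Markov's inequality applied to $e^{\lambda Y}$ gives, for every $\lambda \in [0, 1/\alpha]$,
\begin{align*}
P(Y \ge t) \le e^{-\lambda t}\, \EE[e^{\lambda Y}] \le \exp\Big(-\lambda t + \frac{\lambda^2 \nu^2}{2}\Big).
\end{align*}
The unconstrained minimizer of the exponent is $\lambda^\star = t/\nu^2$. The argument therefore splits into two cases according to whether $\lambda^\star$ is feasible.

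\textbf{Case $t \le \nu^2/\alpha$.} Here $\lambda^\star \le 1/\alpha$ is admissible. Plugging it in gives the bound $e^{-t^2/(2\nu^2)}$. In this regime $t/\nu^2 \le 1/\alpha$, equivalently $t^2/\nu^2 \le t/\alpha$, so $\min\{t^2/\nu^2, t/\alpha\} = t^2/\nu^2$ and the bound has the claimed form.

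\textbf{Case $t > \nu^2/\alpha$.} Here I take the boundary value $\lambda = 1/\alpha$, which gives the exponent $-t/\alpha + \nu^2/(2\alpha^2)$. The case hypothesis $\nu^2/\alpha < t$ gives $\nu^2/(2\alpha^2) < t/(2\alpha)$, so the exponent is at most $-t/(2\alpha)$. In this regime $t/\alpha < t^2/\nu^2$, so the minimum is $t/\alpha$, again matching the claim.

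The only delicate point is the bookkeeping in the last step of each case: checking that the regime boundary $t = \nu^2/\alpha$ coincides exactly with the point where $\min\{t^2/\nu^2, t/\alpha\}$ switches branches, so that the two cases assemble into the single expression $2e^{-\frac12 \min\{t^2/\nu^2,\, t/\alpha\}}$. The rest of the argument is routine.
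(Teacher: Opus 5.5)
Your proof is correct and is essentially the standard Chernoff argument behind Wainwright's Proposition 2.9, which the paper cites rather than reproving: minimize the exponent $-\lambda t + \lambda^2\nu^2/2$ over $\lambda \in [0, 1/\alpha]$, taking the interior point $\lambda = t/\nu^2$ when $t \le \nu^2/\alpha$ and the endpoint $\lambda = 1/\alpha$ otherwise, then handle the lower tail via $-Y$ and a union bound for the factor $2$. The paper's definition requires the MGF bound on the closed range $|t| \le 1/\alpha$, so plugging in $\lambda = 1/\alpha$ directly is legitimate; under Wainwright's open-interval convention you would instead pass to the limit $\lambda \uparrow 1/\alpha$.
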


\begin{lemma}\label{lem:second_moment_sub_exponential}
	Let $X$ be a zero-mean sub-exponential random variable with parameters $(\nu^2, \alpha)$. Then
	\begin{align*}
		\EE[X^2] \le \nu^2.
	\end{align*}
\end{lemma}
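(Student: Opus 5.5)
The goal is $\EE[X^2] \le \nu^2$ for a zero-mean sub-exponential variable $X$ with parameters $(\nu^2,\alpha)$. I will read this off from the moment generating function near the origin. By definition, for all $|t| \le 1/\alpha$,
\[
\EE[e^{tX}] \le e^{t^2\nu^2/2}.
\]
Since this bound holds on a neighborhood of $0$, the moment generating function $M(t) := \EE[e^{tX}]$ is finite on an open interval around $0$. Hence it is smooth there, and it can be differentiated under the expectation, giving $M(0)=1$, $M'(0)=\EE[X]=0$ and $M''(0)=\EE[X^2]$. In particular $X$ has all moments finite.

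Next I expand both sides for small $t$. Taylor's theorem gives
\[
M(t) = 1 + \tfrac{t^2}{2}\EE[X^2] + o(t^2),
\qquad
e^{t^2\nu^2/2} = 1 + \tfrac{t^2\nu^2}{2} + o(t^2).
\]
The inequality $M(t) \le e^{t^2\nu^2/2}$ then yields
\[
\tfrac{t^2}{2}\EE[X^2] \le \tfrac{t^2}{2}\nu^2 + o(t^2).
\]
Dividing by $t^2/2$ and letting $t\to 0$ gives $\EE[X^2]\le \nu^2$.

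The only step needing care is justifying the expansion of $M$, that is, differentiating twice under the expectation. I would handle it by dominated convergence. For $|t|\le s<1/\alpha$ the difference quotients of $e^{tX}$ and their derivatives are dominated by $C\,(1+X^2)\,(e^{sX}+e^{-sX})$, and this is integrable because $\EE[e^{s'X}]$ and $\EE[e^{-s'X}]$ are finite for any $s<s'<1/\alpha$.

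Alternatively, one can avoid differentiating $M$ altogether. Use the elementary inequality $e^{u} + e^{-u} - 2 \ge u^2$, which holds since $\cosh u - 1 \ge u^2/2$. Applying it with $u=tX$ and taking expectations gives
\[
t^2\EE[X^2] \le M(t)+M(-t)-2 \le 2\bigl(e^{t^2\nu^2/2}-1\bigr).
\]
Dividing by $t^2$ and letting $t\to0$ gives the bound directly, by monotone convergence and with no interchange issues. I will present this second route.
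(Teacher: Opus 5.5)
Your argument is correct, and the route you present differs from the paper's in a small but real way.

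The paper uses the one-sided nonnegative quantity $e^{tX}-1-tX$. It observes that $\frac{e^{tX}-1-tX}{t^2/2}\to X^2$ pointwise as $t\to 0$ and applies Fatou's lemma. It uses $\EE[X]=0$, together with integrability of $X$, to rewrite $\EE[e^{tX}-1-tX]=\EE[e^{tX}]-1\le e^{t^2\nu^2/2}-1$. A limit theorem is genuinely needed there: since $e^{u}-1-u<u^2/2$ for $u<0$, the one-sided quantity does not dominate $\frac{t^2}{2}X^2$ pointwise at a fixed $t$.

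Your symmetrization $e^{u}+e^{-u}-2\ge u^2$ holds for all $u$. It therefore yields the non-asymptotic bound $\EE[X^2]\le 2(e^{t^2\nu^2/2}-1)/t^2$ for every $0<|t|\le 1/\alpha$. The last step is then just the limit of a deterministic function of $t$, so no convergence theorem is required. Your appeal to monotone convergence is unnecessary, since $\EE[X^2]$ does not depend on $t$. Your argument also never has to cancel a linear term, so beyond the centering built into the definition it uses only finiteness of $\EE[e^{\pm tX}]$.

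Your first route, differentiating the moment generating function twice under the expectation, is also valid. It is the heaviest of the three, because it needs an integrable envelope that both the paper's Fatou argument and your symmetrized argument avoid.
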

\begin{proof}
	A result similar to this lemma has appeared in ~\cite[Exercise 2.5]{wainwright2019high}. We provide a proof here for completeness. Note that $e^{tX} -1 - tX \ge 0$ for any $t \in \RR$ and 
	\begin{align*}
		X^2 = \lim_{t \rightarrow 0} \frac{e^{tX} - 1 - tX}{\frac{1}{2}t^2}.
	\end{align*}
	By Fatou's lemma,
	\begin{align*}
		\EE[X^2] &\le \liminf_{t \rightarrow 0} \EE\left[\frac{e^{tX} - 1 - tX}{\frac{1}{2}t^2}\right] \\
		&\le  \liminf_{t \rightarrow 0} \frac{e^{\frac{t^2\nu^2}{2}} -1}{\frac{1}{2}t^2}\\
		&= \nu^2.
	\end{align*}
\end{proof}

\begin{lemma}\label{lem:norm_subGaussian}
	Let $X \in \RR^d$ be a zero-mean sub-exponential random vector with parameter $(\nu^2, \alpha)$. Then, for any $t \ge 0$, we have
	\begin{align*}
		P[\|X\|_*\ge t] \le 2 e^{-\frac{t^2}{8\nu^2} + 2d} + 2 e^{-\frac{t}{4\alpha} + 2d}
	\end{align*}
\end{lemma}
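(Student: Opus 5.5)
The plan is the standard $\varepsilon$-net argument, applied to the dual norm with scalar sub-exponential tails. The norm is a Hilbert norm $\|x\| = \|x\|_Q$, so the dual norm is $\|y\|_* = \|y\|_{Q^{-1}} = \sup_{\|v\|\le 1}\langle v, y\rangle$. The supremum runs over the unit ball of $\|\cdot\|$, and this is exactly the set of directions covered by the definition of a sub-exponential random vector. Each fixed direction is therefore controlled by Lemma~\ref{lem:subexp_tail}, and the remaining work is a union bound.

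Let $\mathcal N$ be a $\tfrac12$-net of the unit sphere $\{v:\|v\|=1\}$ in the norm $\|\cdot\|$. A volumetric argument, which is invariant under the linear map $Q^{1/2}$, gives $|\mathcal N|\le 5^d \le e^{2d}$.

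Fix $y$ and pick $v^\star$ attaining $\|y\|_*$. Choose $v\in\mathcal N$ with $\|v-v^\star\|\le \tfrac12$. Then
\[
\|y\|_* = \langle v^\star,y\rangle \le \langle v,y\rangle + \|v^\star-v\|\,\|y\|_* \le \langle v,y\rangle + \tfrac12\|y\|_*,
\]
so $\|y\|_* \le 2\max_{v\in\mathcal N}\langle v,y\rangle$.

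From this reduction,
\[
P(\|X\|_*\ge t)\le \sum_{v\in\mathcal N} P\big(\langle v,X\rangle\ge t/2\big).
\]
Since $X$ has mean zero, each $\langle v,X\rangle$ is a zero-mean sub-exponential variable with parameters $(\nu^2,\alpha)$. Lemma~\ref{lem:subexp_tail} then gives
\[
P\big(\langle v,X\rangle\ge t/2\big)\le 2\exp\!\Big(-\tfrac12\min\Big\{\tfrac{t^2}{4\nu^2},\tfrac{t}{2\alpha}\Big\}\Big)\le 2e^{-t^2/(8\nu^2)}+2e^{-t/(4\alpha)}.
\]
The last step uses $e^{-\min\{a,b\}}\le e^{-a}+e^{-b}$. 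Multiplying by $|\mathcal N|\le e^{2d}$ yields the claim exactly.

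The main point to handle carefully is that the net lives on the $\|\cdot\|$-sphere, not the Euclidean sphere. The sub-exponential vector definition quantifies over $\|v\|=1$ in the working norm, so this is consistent with how the net is used. For the covering bound, I would map to the Euclidean sphere via $Q^{1/2}$ and apply the standard $(1+2/\varepsilon)^d$ estimate. Everything else is routine.
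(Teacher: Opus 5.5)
Your proposal is correct and follows essentially the same route as the paper: a $\tfrac12$-net of the $\|\cdot\|$-unit sphere obtained by pulling a Euclidean net back through $Q^{-1/2}$, the reduction $\|X\|_*\le 2\max_{v\in\mathcal N}\langle v,X\rangle$, the cardinality bound $5^d\le e^{2d}$, and a union bound combined with Lemma~\ref{lem:subexp_tail} at level $t/2$. Your explicit sum over the net, with each term bounded uniformly, is a slightly cleaner way to write the union-bound step than the paper's $e^{2d}P(\langle v_1,X\rangle\ge t/2)$.
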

\begin{proof}
	The proof largely follows~\cite[Lemma 1]{jin2019short}, and we provide it here for completeness. Let $\{w_i\}_{i \in I}$ be a $\frac{1}{2}$-net of the unit sphere $\mathbb{S}^{d-1}$ under the standard Euclidean metric. Let $\| \cdot\|$ be induced by the symmetric positive definite matrix $Q$, i.e., $\|x\| = \sqrt{\dotp{x,Qx}}$. Define $v_i := Q^{-1/2} w_i$. It is straightforward to verify that \(\{v_i\}_{i \in I}\) is a \(\frac{1}{2}\)-net of the unit sphere
	\[
	S := \{x : \|x\| = 1\}
	\]
	with respect to the metric \(d(x,y) = \|x-y\|\). By the definition of sub-exponential random vector and Lemma~\ref{lem:subexp_tail}, for each $v_i$, we have
	\begin{align}\label{eqn:tail_bound_one_direction}
		P(\dotp{ v_i,  X} \ge t) \le 2e^{-\frac{1}{2} \min\{\frac{t^2}{\nu^2}, \frac{t}{\alpha}\}}.
	\end{align}
	Let $v(X) = \argmax_{\|v\| =1} \dotp{v,X}$. By definition of $\frac{1}{2}$-net, there exists $i(X)$  that $\|v(X) - v_{i(X)}\| \le \frac{1}{2}.$ As a consequence, we have 
	\begin{align*}
		\|X\|_*& =  \dotp{v_{i(X)}, X} + \dotp{v(X) - v_{i(X)}, X} \\
		& \le \dotp{v_{i(X)}, X}  + \frac{\|X\|_*}{2}.
	\end{align*} 
	Therefore,  $\|X\|_* \le 2 \dotp{v_{i(X)}, X}$. Note also that by~\cite[Lemma 5.2]{vershynin2010introduction}, the cardinality of $\{w_i\}_{i \in I}$  is upper bounded by $e^{2d}$, and so is $\{v_i\}_{i \in I}$. As a result, we have 
	\begin{align*}
		P(\|X\|_* \ge t) &\le P( \dotp{v_{i(X)}, X} \ge t/2)\\
		&\le e^{2d} P(\dotp{v_1, X} \ge t/2)\\
		&\le 2e^{ - \min\{\frac{t^2}{8\nu^2}, \frac{t}{4\alpha}\} + 2d} \\
		&\le 2e^{ - \frac{t^2}{8\nu^2}+ 2d} + 2e^{ - \frac{t}{4\alpha}+ 2d},
	\end{align*}
	where the second inequality follows from the union bound and the third inequality follows from~\eqref{eqn:tail_bound_one_direction}.
\end{proof}

The next lemma is a basic fact about expectations.
\begin{lemma}[Moment and tails]\label{lem: moment_and_tail}
	Let $X$ be a nonnegative random variable with finite $p$-th moment. Then
	\begin{align*}
		\EE[X^p] = \int_{0}^{\infty} p \cdot t^{p-1} P(X \ge t) dt.
	\end{align*}
\end{lemma}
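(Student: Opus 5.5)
The statement to prove is the layer-cake identity $\EE[X^p] = \int_0^\infty p\, t^{p-1} P(X \ge t)\,dt$ for a nonnegative random variable $X$ with finite $p$-th moment, where $p>0$. The plan is to write $X^p$ pointwise as an integral of an indicator and then exchange expectation and integration by Tonelli's theorem. No earlier result of the paper is needed; the only tool is Tonelli.

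First, fix an outcome $\omega$ and set $x = X(\omega)\ge 0$. By the fundamental theorem of calculus,
\begin{align*}
x^p = \int_0^{x} p\, t^{p-1}\,dt = \int_0^\infty p\, t^{p-1}\, 1_{\{t \le x\}}\,dt .
\end{align*}
This holds for every $p>0$, since $t^{p-1}$ is integrable near $0$ whenever $p>0$. It also holds trivially when $x=0$.

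Second, take expectations of both sides. The integrand $(\omega,t)\mapsto p\,t^{p-1} 1_{\{t\le X(\omega)\}}$ is nonnegative. It is jointly measurable on $\Omega\times[0,\infty)$, because $\{(\omega,t): X(\omega)-t\ge 0\}$ is a measurable set in the product $\sigma$-algebra. Tonelli's theorem therefore allows us to swap $\EE$ and $\int dt$ without any integrability hypothesis, giving
\begin{align*}
\EE[X^p] = \int_0^\infty p\, t^{p-1}\, \EE[1_{\{X\ge t\}}]\,dt = \int_0^\infty p\, t^{p-1} P(X\ge t)\,dt .
\end{align*}
Both sides are equal in $[0,\infty]$. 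The finite-moment assumption only ensures that the common value is finite.

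The only point that needs care is the joint measurability of the indicator, so that Tonelli applies; this follows as noted above. The choice between $\{X\ge t\}$ and $\{X>t\}$ is immaterial: the two probabilities differ for at most countably many $t$, a Lebesgue-null set, so the integrals coincide.
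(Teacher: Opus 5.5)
Your proof is correct. The paper states this lemma as a basic fact about expectations and gives no proof, so your argument is simply the standard one: write $x^p=\int_0^\infty p\,t^{p-1}1_{\{t\le x\}}\,dt$ pointwise, then swap expectation and integral by Tonelli, which applies because the integrand is nonnegative and jointly measurable and both measures are $\sigma$-finite. As you observe, the identity holds in $[0,\infty]$ without the finite-moment hypothesis, which only guarantees that the common value is finite.
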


\begin{lemma}\label{lem:truncted_second_moment_light_tail}
	Let $X \in \RR^d$ be a zero-mean sub-exponential random vector with parameter $(\nu^2,\alpha)$. Let $c > 0$ be a constant. We have
	\begin{align*}
		\EE[\|X\|_*^2 1_{\|X\|_* \ge c}] \le (2c^2+ 16 \nu^2) e^{-\frac{c^2}{8\nu^2} + 2d} + (4c^2+96\alpha^2)  e^{-\frac{c}{4\alpha} + 2d}.
	\end{align*}
\end{lemma}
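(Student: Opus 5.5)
We split the truncated moment according to the tail integral of Lemma~\ref{lem: moment_and_tail} and feed in the tail bound of Lemma~\ref{lem:norm_subGaussian}. Writing $Y=\|X\|_*$, the key identity is
\begin{align*}
\EE[Y^2 1_{Y\ge c}] = c^2 P(Y\ge c) + \int_c^\infty 2t\, P(Y\ge t)\,dt .
\end{align*}
This follows from applying Lemma~\ref{lem: moment_and_tail} with $p=2$ to the nonnegative variable $Y 1_{Y\ge c}$. Since $P(Y1_{Y\ge c}\ge t)$ equals $P(Y\ge c)$ for $t\le c$ and $P(Y\ge t)$ for $t>c$, the integral splits into the two pieces above.

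We then insert $P(Y\ge t)\le 2e^{-t^2/(8\nu^2)+2d}+2e^{-t/(4\alpha)+2d}$. The boundary term contributes $2c^2 e^{-c^2/(8\nu^2)+2d}+2c^2e^{-c/(4\alpha)+2d}$. The two integrals are elementary.

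For the Gaussian piece,
\begin{align*}
\int_c^\infty 4t\,e^{-t^2/(8\nu^2)}\,dt = 16\nu^2 e^{-c^2/(8\nu^2)}.
\end{align*}
Multiplied by $e^{2d}$, this produces exactly the $(2c^2+16\nu^2)e^{-c^2/(8\nu^2)+2d}$ term.

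For the exponential piece, substitute $t=c+s$:
\begin{align*}
\int_c^\infty 4t\,e^{-t/(4\alpha)}\,dt = e^{-c/(4\alpha)}\bigl(16\alpha c + 64\alpha^2\bigr).
\end{align*}
Young's inequality gives $16\alpha c\le 2c^2+32\alpha^2$, which bounds this by $(2c^2+96\alpha^2)e^{-c/(4\alpha)}$. Adding the boundary contribution $2c^2$ yields the $(4c^2+96\alpha^2)e^{-c/(4\alpha)+2d}$ term.

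There is no real obstacle. The only care needed is to check finiteness of the second moment, which holds by sub-exponentiality so Lemma~\ref{lem: moment_and_tail} applies, and to match the constants in the Young step so the stated coefficients are recovered.
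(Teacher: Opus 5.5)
Your proposal is correct and follows the paper's proof step for step. It uses the same split of the tail integral at $c$: the boundary term $c^2 P(\|X\|_*\ge c)$ plus $\int_c^\infty 2t\,P(\|X\|_*\ge t)\,dt$. It then applies the tail bound of Lemma~\ref{lem:norm_subGaussian}, evaluates the same two elementary integrals, and uses the same Young step $16\alpha c\le 2c^2+32\alpha^2$. Your justification of the split via $P(Y1_{Y\ge c}\ge t)$ is just a more explicit version of the identity the paper writes down directly.
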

\begin{proof}
	By Lemma~\ref{lem:norm_subGaussian}, for any $ t \ge 0$,  we have
	\begin{align}\label{eqn:norm_tail_bound}
		P[\|X\|_* \ge t] \le 2 e^{-\frac{t^2}{8\nu^2} + 2d} + 2 e^{-\frac{t}{4\alpha} + 2d}
	\end{align}
	By Lemma~\ref{lem: moment_and_tail}, the bound~\eqref{eqn:norm_tail_bound},  and some calculus, we have
	\begin{align*}
		\EE[\|X\|_*^2 1_{\|X\|_* \ge c}] &= \int_{0}^{c} 2t P(\|X\|_* \ge c) dt + \int_{c}^\infty 2tP(\|X\|_* \ge t) dt \\
		&\le  2c^2 e^{-\frac{c^2}{8\nu^2} + 2d} +  2c^2 e^{-\frac{c}{4\alpha} + 2d} + \int_{c}^{\infty}  4t (e^{-\frac{t^2}{8\nu^2} + 2d} +  e^{-\frac{t}{4\alpha} + 2d}) dt \\ 
		&= (2c^2 + 16\nu^2)  e^{-\frac{c^2}{8\nu^2} + 2d} + (2c^2+ 16 \alpha c+ 64\alpha^2)  e^{-\frac{c}{4\alpha} + 2d}.\\
		&\le (2c^2 + 16\nu^2)  e^{-\frac{c^2}{8\nu^2} + 2d} + (4c^2+ 96\alpha^2)  e^{-\frac{c}{4\alpha} + 2d}.
	\end{align*}
\end{proof}

\end{document}